\tikzset{->-/.style={decoration={ markings, mark=at position #1 with
{\arrow{>}}},postaction={decorate}}}
\tikzset{-<-/.style={decoration={ markings, mark=at position #1 with
{\arrow{<}}},postaction={decorate}}}
\theoremstyle{plain}
\newtheorem{theorem}{Theorem}[section]
\newtheorem{lemma}[theorem]{Lemma}
\newtheorem{corollary}[theorem]{Corollary}
\newtheorem{proposition}[theorem]{Proposition}
\theoremstyle{definition}
\newtheorem{definition}[theorem]{Definition}
\newtheorem{construction}[theorem]{Construction}
\newtheorem{example}[theorem]{Example}
\newtheorem{remark}[theorem]{Remark}
\newtheorem{convention}[theorem]{Convention}
\numberwithin{equation}{section}
\numberwithin{equation}{section}
\newtheorem{problem}[theorem]{Problem}
\newtheorem{conjecture}[theorem]{Conjecture}
\tikzset{->-/.style={decoration={ markings, mark=at position #1 with
{\arrow{>}}},postaction={decorate}}}
\tikzset{-<-/.style={decoration={ markings, mark=at position #1 with
{\arrow{<}}},postaction={decorate}}}
\begin{document}
\title{Riemann-Hilbert problems from rank 3 WKB spectral networks}
\author{Dongjian Wu}
\address{Department of Mathematical Sciences, Tsinghua University, 100084 Beijing, China}
\email{wdj20@mails.tsinghua.edu.cn}
\keywords{WKB spectral networks, Riemann-Hilbert problems, Spectral coordinates, Stability conditions,}


\begin{abstract}
We extract cluster structures and establish spectral coordinates from rank 3 WKB spectral networks $\mathcal W(\varphi,\vartheta)$ when zeros of $\varphi(z)$ are almost on a line in the complex plane. Then, we provide solutions to the Riemann-Hilbert problems \cite{B2} defined by these WKB spectral networks, using the spectral coordinates. As an application, we embed spaces of framed polynomial cubic differentials, associated with these WKB spectral networks, into spaces of stability conditions, adopting the approach of \cite{BS}.
\end{abstract}

\maketitle
\tableofcontents
\setlength\parindent{0pt}
\setlength{\parskip}{5pt}

\section{Introduction}
In this paper, our primary focus is on the WKB spectral netowrks $\mathcal W(\varphi,\vartheta)$ of rank 3 \cite{HN, GMN2} when zeros of $\varphi(z)$ are almost on a line in the complex plane. The first result involves utilizing compatible abelianization trees to derive cluster structures and constructing spectral coordinates from these WKB spectral networks. Our second result provides solutions to Riemann-Hilbert problems \cite{B2} from these WKB spectral networks. As an application, we intend to establish a holomorphic embedding between the spaces of framed cubic differentials, associated with these WKB spectral networks, and stability conditions, by following the approach outlined in \cite{BS}.

\subsection{WKB spectral networks and clusters}
Spectral networks, introduced in \cite{GMN1,GMN2}, are networks of trajectories on Riemann surfaces subject to specific rules. They serve as a valuable tool for computing BPS degeneracies in four-dimensional $\mathcal N=2$ theories coupled to surface defects, enabling the determination of soliton and particle degeneracies and facilitating the study of moduli spaces of flat $\mathrm{GL}(K, \mathbb C)$ connections. Spectral networks have been shown to have a number of interesting applications, both physical and mathematical, especially to moduli spaces arising in geometry. An intriguing issue proposed in \cite{GMN2, N1} conjectured that there is a potential correspondence between clusters and spectral networks. As one of the primary objectives of this paper, we manage to establish connection for a certian class of rank 3 WKB spectral networks. 

Let $\mathcal W(\varphi_m,\vartheta)$ be a (rank 3) WKB spectral network, where $\varphi_m(z)=z^m+a_1z^{m-1}+\cdots+a_{m-1}z+a_m$ is a complex polynomial of degree $m\ge2$ and $\vartheta$ is a real number (cf. \Cref{WKB spectral networks} or \cite{N1}). We implicitly assume that zeros of $\varphi_m(z)$ are all simple without separate mention. The associated spectral curve $\Sigma(\varphi_m)$ is defined by 
\[
\Sigma(\varphi_m):=\{(x,z): x^3-\varphi_m(z)=0\}\subset\mathbb C^2.
\]
Denote by $\Gamma(\varphi_m)=H_1(\Sigma(\varphi_m),\mathbb Z)$ the first homology group of $\Sigma(\varphi_m)$ and consider the period map
\[
Z:\Gamma(\varphi_m)\to\mathbb C,\quad \gamma\mapsto Z_{\gamma}:=\oint_{\gamma}xdz.
\]
A compatible abelianization tree $\mathrm{T}$ with respect to $\mathcal W(\varphi_m,\vartheta)$ is a collection of oriented arcs in $\overline{\mathbb C}$, subject to certain constriants (see \Cref{compatible trees}). We denote by $\mathcal{T}(\varphi_m,\vartheta)$ the collection of compatible abelianization trees associated with $\mathcal W(\varphi_m,\vartheta)$ and further define
\[
\mathcal{T}^{\circ}(\varphi_m,\vartheta):=\mathcal{T}(\varphi_m,\vartheta)\backslash\{\mathrm{T}_{1,2,3},\mathrm{T}_{2,3,4}\dots,\mathrm{T}_{m+2,m+3,1},\mathrm{T}_{m+3,1,2}\}.
\]
The BPS count $\Omega(\varphi_m,\gamma)\in\mathbb Z$ for a nontrivial class $\gamma\in\Gamma(\varphi_m)$ represents the count of the finite trajectories occurring within $\mathcal W(\varphi_m,\vartheta=\mathrm{arg}\,Z_{\gamma})$. We are particularly interested in the case when zeros of $\varphi_m(z)$ are almost on a line (cf. \Cref{almost on a line}), meaning that there exists another polynomial $\widetilde{\varphi_m}(z)$ obtained by perturbing $\mathrm{Zero}(\varphi_m)$ such that zeros of $\widetilde{\varphi_m}(z)$ are on a line in $\mathbb C$, and $\mathcal W(\varphi_m,\vartheta)$ and $\mathcal W(\widetilde{\varphi_m},\vartheta_1)$ are in the same chamber, i.e., their BPS counts are equal. We say that $\mathcal W(\varphi_m,\vartheta)$ is BPS-free if it has no finite trajectories, and BPS-ful otherwise. 

Some fundamental concepts in cluster theory have been reviewed in \Cref{clusters} (cf. \cite{FZ1, FZ2, FST, FP, S}). We can state our main findings regarding the correspondence between clusters and spectral networks. We consider $\mathcal {T}(\varphi_m,\vartheta)$ as an extended cluster, where $\mathcal{T}^{\circ}(\varphi_m,\vartheta)\subset\mathcal{T}(\varphi_m,\vartheta)$ is the set of cluster variables and its complement is the set of coefficient variables. Based on the discussions in \cite{FP}, we can define a quiver $Q(\varphi_m,\vartheta)$ associated with $\mathcal W(\varphi_m,\vartheta)$ such that the associated cluster algebra $\mathcal A(Q(\varphi_m,\vartheta),\mathcal{T}(\varphi_m,\vartheta))$ is isomorphic to $\mathbb C[Gr(3,m+3)]$. We summarize the result as follows.

\begin{sloppypar}
\begin{theorem}
Let $\mathcal W(\varphi_m,\vartheta)$ be a BPS-free WKB spectral network, where $\varphi_m$ is a polynomial of degree $m\ge2$ whose zeros are almost on a line in $\mathbb C$. Then, there exists a quiver $Q(\varphi_m,\vartheta)$ associated with $\mathcal W(\varphi_m,\vartheta)$ such that the associated cluster algebra $\mathcal A(Q(\varphi_m,\vartheta)$, $\mathcal{T}(\varphi_m,\vartheta))$ is isomorphic to $\mathbb C[Gr(3,m+3)]$. In particular, $\mathcal A(Q(\varphi_m,\vartheta),\mathcal{T}(\varphi_m,\vartheta))$ is of type $A_2, D_4, E_6, E_8$ when $m = 2,3,4,5$ respectively. Additionally, while changing $\vartheta$ along $\mathbb R$, the variations on $\mathcal{T}(\varphi_m,\vartheta)$ will correspond to a sequence of seed mutations on $\mathcal A(Q(\varphi_m,\vartheta),\mathcal{T}(\varphi_m,\vartheta))$.
\end{theorem}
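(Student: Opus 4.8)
The plan is to collapse the whole problem to a single combinatorially explicit model and then recognize that model as Scott's cluster structure on the Grassmannian. \textbf{Normalization.} Since the zeros of $\varphi_m$ are almost on a line, there is a polynomial $\widetilde{\varphi_m}$ with collinear zeros such that $\mathcal W(\varphi_m,\vartheta)$ and $\mathcal W(\widetilde{\varphi_m},\vartheta_1)$ lie in the same chamber; as their BPS counts $\Omega$ agree, the two networks carry identical collections of compatible abelianization trees, so $\mathcal T(\varphi_m,\vartheta)$ and $\mathcal T(\widetilde{\varphi_m},\vartheta_1)$ are canonically identified as extended clusters. I would therefore work throughout with the collinear model, in which the topology of the trajectories is rigid and can be drawn explicitly, and transport the conclusion back to $\varphi_m$ at the end.

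\textbf{Labeling the trees and building the quiver.} For the collinear model I would enumerate $\mathcal T$ directly and attach to each compatible tree $\mathrm T$ a three-element label $\{i,j,k\}\subset\{1,\dots,m+3\}$, so that the $m+3$ coefficient trees $\mathrm T_{i,i+1,i+2}$ correspond exactly to the cyclically consecutive triples, matching the frozen Plücker coordinates $\Delta_{i,i+1,i+2}$ of $Gr(3,m+3)$, while the elements of $\mathcal T^{\circ}(\varphi_m,\vartheta)$ correspond to the mutable Plücker coordinates of a fixed initial seed. A count makes this plausible: a generic phase should carry $3m+1$ trees, namely $2(m-1)$ mutable and $m+3$ frozen, which is precisely the size of a seed of $\mathbb C[Gr(3,m+3)]$. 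Following the construction in \cite{FP}, I would then read off $Q(\varphi_m,\vartheta)$ from the adjacencies of the trees in the collinear model and check that, under this labeling, its mutable part coincides up to the usual moves with the quiver of the standard Postnikov seed for $Gr(3,m+3)$. With both the variables and the quiver matched, the cluster structure of \cite{S} on Grassmannians yields the isomorphism $\mathcal A(Q(\varphi_m,\vartheta),\mathcal T(\varphi_m,\vartheta))\cong\mathbb C[Gr(3,m+3)]$.

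\textbf{Finite type and wall-crossing.} For $m=2,3,4,5$ the relevant Grassmannians are $Gr(3,5),Gr(3,6),Gr(3,7),Gr(3,8)$, the finite-type cases. I would delete the frozen vertices of $Q(\varphi_m,\vartheta)$ and compute the mutation class of the resulting principal quiver for each $m$; by the finite-type classification of \cite{FZ2} these are mutation equivalent to $A_2,D_4,E_6,E_8$ respectively, pinning down the cluster type. For the last assertion, as $\vartheta$ moves along $\mathbb R$ it crosses the walls $\vartheta=\arg Z_\gamma$ at which $\mathcal W$ becomes BPS-ful; across each such wall a single finite trajectory is created or destroyed, and I would show that the induced change on the tree collection is a local flip replacing one tree of $\mathcal T^{\circ}$ by another, which I then identify with a seed mutation at the corresponding cluster variable, so that the variation of $\mathcal T(\varphi_m,\vartheta)$ realizes a sequence of seed mutations.

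\textbf{Main obstacle.} The crux lies in matching the combinatorics: establishing that the quiver read off from the rank $3$ network is the Grassmannian quiver, and that each wall-crossing flip of abelianization trees is exactly one cluster mutation. Unlike the rank $2$ situation, where quadratic differentials translate into triangulations of surfaces and the correspondence is classical, the rank $3$ case requires controlling the full structure of abelianization trees and their interactions; it is precisely the collinearity hypothesis that renders the configuration explicit enough to carry out this identification and to track the flips across all walls uniformly in $m$.
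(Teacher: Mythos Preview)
Your high-level strategy matches the paper's: handle the collinear model explicitly, identify it with a Grassmannian seed, and interpret wall-crossings as mutations. The substantive gap is in your normalization step.

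You infer from equality of BPS counts that $\mathcal T(\varphi_m,\vartheta)$ and $\mathcal T(\widetilde{\varphi_m},\vartheta_1)$ are canonically identified. This does not follow. The BPS count $\Omega(\varphi,\gamma)$ is computed at the single phase $\arg Z_\gamma$ and does not depend on $\vartheta$, whereas the tree collection $\mathcal T(\varphi,\vartheta)$ does depend on $\vartheta$ and changes each time $\vartheta$ crosses a BPS-ful value. Two networks in the same BPS chamber can, and typically do, carry different tree collections; in particular the quivers $Q(\varphi_m,\vartheta)$ and $Q(\widetilde{\varphi_m},\vartheta_1)$ are in general only mutation-equivalent, not equal. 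So there is no free ``transport back''.

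The paper therefore does not attempt your shortcut. It first analyzes one explicit collinear network at a fixed reference phase, matching $\mathcal T$ there with Fomin--Pylyavskyy's collection $K(T_{m+3})$ for a specific zig-zag triangulation of the $(m{+}3)$-gon (rather than directly with a Postnikov seed; that is recovered afterwards as a corollary), and invokes \cite[Theorem~8.1]{FP} to obtain the isomorphism with $\mathbb C[Gr(3,m+3)]$ for that reference seed. For a general almost-collinear $(\varphi_m,\vartheta)$ it then verifies, by an explicit case analysis in eight cases indexed by the parity of $m$ and the position $i$ of the adjacent zero pair $\{z_i,z_{i+1}\}$, that each crossing of a BPS-ful wall replaces exactly one tree according to a three-term identity drawn from the FP exchange relations. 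Only after this computation does it \emph{define} $Q(\varphi_m,\vartheta):=\mu(Q(\widetilde{\varphi},\vartheta_1))$, where $\mu$ is the resulting sequence of mutations. Your final paragraph correctly names this matching of flips with mutations as the crux; the point is that this same computation is also what replaces your normalization step, not merely what handles the subsequent variation of $\vartheta$.
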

\end{sloppypar}
Note that $Q(\varphi,\vartheta)$ is uniquely defined up to the simultaneous reversal of directions of all edges incident to any subset of connected components of the mutable part of the quiver. We determine the directions of arrows of the quiver $Q(\varphi,\vartheta)$ according to \Cref{convention}. 

Furthermore, based on \Cref{main_thm1} and the proof process of \cite[Theorem 3]{S}, we find that $\mathcal T(\varphi_m,\vartheta)$ can be embedded in a Postnikov diagram $P(\varphi_m,\vartheta)$ of type $(3,m+3)$, i.e., each compatible abelianization tree $\mathrm{T}_{p,q,r}\in\mathcal{T}(\varphi_m,\vartheta)$ corresponds to a 3-subset $(p,q,r)$ in $P(\varphi_m,\vartheta)$, and while changing $\vartheta$ along $\mathbb R$, the variations on $\mathcal{T}(\varphi_m,\vartheta)$ will correspond to a sequence of geometric exchanges on the Postnikov diagram $P(\varphi_m,\vartheta)$, as illustrated in \Cref{Postnikov diagram}.

Let $V=\mathbb C^3$ as a three dimensional complex vector space. We can attach a nonzero vector $x_i = (x_{i,1}, x_{i,2}, x_{i,3})^t \in V$ to each asymptotic direction $l_i$. This allows us to define an $\mathrm{SL}(V)$ invariant in $\mathbb C[Gr(3,m+3)]$ associated with a compatible abelianization tree $\mathrm{T}$, denoted by $A_{\mathrm{T}}$, as described in \Cref{SL(V) invariants}. This invariant follows the notion of $\mathrm{SL}(V)$ invariants for tensor diagrams in \cite{FP}. The spectral coordinate $X_{\mathrm{T}}$ with respect to a compatible abelianization tree $\mathrm{T}\in\mathcal{T}^{\circ}(\varphi_m,\vartheta)$, as a rational function on $(\mathbb{CP}^2)^{m+3}/\mathrm{SL(V)}$, is defined as 
\[
X_{\mathrm{T}}:=\frac{\displaystyle\prod_{a\in Q_1(\varphi_m,\vartheta),t(a)=\mathrm{T}} A_{s(a)}}{\displaystyle\prod_{b\in Q_1(\varphi_m,\vartheta),s(b)=\mathrm{T}} A_{t(b)}},
\]
where $Q_1(\varphi_m,\vartheta)$ denotes the set of arrows in $Q(\varphi_m,\vartheta)$, and $s(a)$ and $t(a)$ denote the source and target of an arrow $a$, respectively (cf. \Cref{spectral coordinate1}). The homology class corresponding to the $\mathrm{SL}(V)$ invariant $X_{\gamma}$, denoted by $[X_{\gamma}]$, is defined as
\[
[X_{\mathrm{T}}]:=\left[\sum_{a\in Q_1(\varphi_m,\vartheta),t(a)=\mathrm{T}}s(a)-\sum_{b\in Q_1(\varphi_m,\vartheta),s(b)=\mathrm{T}}t(b)\right] \in\Gamma(\varphi_m),
\]
which is expressed as a formal linear combination of compatible abelianization trees (cf. Equation \eqref{spectral coordinate0}). For any BPS-free WKB spectral network $\mathcal W(\varphi_m,\vartheta)$, where zeros of $\varphi_m$ are almost on a line, it is shown in \Cref{generator} that $\{[X_{\mathrm{T}}]: \mathrm{T}\in\mathcal{T}^{\circ}(\varphi_m,\vartheta)\}$ generates a basis in $\Gamma(\varphi_m)$. Due to this observation, we can now define the spectral coordinate with respect to any class $\gamma=\sum k_{\mathrm{T}}[X_{\mathrm T}]\in\Gamma(\varphi_m)$ from $\mathcal W(\varphi_m,\vartheta)$ as 
\begin{equation}
\label{spectral coordinate}
X_{\gamma}:=\prod_{\mathrm{T}\in\mathcal{T}^{\circ}(\varphi_m,\vartheta)}X_{\mathrm{T}}^{k_{\mathrm T}},
\end{equation}
which satisfies that $[X_{\gamma}]=\gamma$.

\subsection{Riemann-Hilbert problems}
In \cite{B2}, a class of Riemann-Hilbert problems is studied, which naturally arises in Donaldson-Thomas theory. These problems involve maps from the complex plane to an algebraic torus, with prescribed discontinuites along a collection of rays. They can be understood as the conformal limit of the ones considered by Gaiotto, Moore, and Neitzke in \cite{GMN3} from a physical perspective. In this paper, our focus is on the Riemann-Hilbert problems mentioned in \cite{B2}. The solutions to the Riemann-Hilbert problems we consider, arising from quadratic differentials, have been extensively studied in various references, including \cite{ A1, A2, A3,A4,BM}. Building on their ideas, we will construct solutions to the Riemann-Hilbert problems from WKB spectral networks $\mathcal W(\varphi,\vartheta)$ when zeros of $\varphi(z)$ are almost on a line. This construction heavily relies on the spectral coordinates and the properties of solutions to certain differential equations. 

Some basic definitions in Riemann-Hilbert problems has been reviewed in \Cref{Riemann-Hilbert pro} (cf. \cite{B1, B2}), which include BPS structures $(\Gamma,Z,\Omega)$ and the twisted torus $\mathbb{T}_-$. Let $\mathcal W(\varphi_a,\vartheta)$ be a generic WKB spectral network, where $a=(a_1,\dots,a_m)\in\mathbb C^m$ represents the coefficients of $\varphi_a(z)$, the associated BPS structure $(\Gamma_a,Z_a,\Omega_a)$ is defined as (cf. \Cref{BPS}):
\begin{enumerate}
\item the charge lattice is $\Gamma_a=H_1(\Sigma(\varphi_a),\mathbb Z)$ with its intersection form $\langle-,-\rangle$;
\item the central charge $Z_a:\Gamma_a\to\mathbb C$ is the period map \eqref{central charge};
\item the BPS invariants $\Omega_a(\gamma):=\Omega(\varphi_a,\gamma)$ are either $0$ or $1$, as discussed in \Cref{BPS_counts}.
\end{enumerate}

It is worth noting that when zeros of $\varphi_a(z)$ are almost on a line in $\mathbb C$, the BPS invariants remain constant, and the variation of $\Gamma_a$ is determined by the Gauss-Manin connection as the stability parameter varies. However, in more general cases, the behavior of BPS structures can differ significantly depending on the geometry of the associated moduli spaces and BPS structures. 

Given a ray $r\subset\mathbb C^{\ast}$, we define the half-plane
\[
\mathbb H_r=\{\hbar\in\mathbb C^{\ast}:\hbar=z\cdot v\ \text{with}\ z\in r,\,\mathrm{Re}(v)>0\}\subset\mathbb C^{\ast}.
\]
The Riemann-Hilbert problem from $\mathcal W(\varphi_a,\vartheta)$ depends on the additional choice of an element $\xi\in\mathbb T_-$ as a constant term. The problem is stated as follows:
\begin{problem}\cite[Problem 4.3]{B2}
We aim to find a meromorphic funtion $Y_r:\mathbb H_r\to \mathbb T_-$ for each non-active ray $r\in\mathbb C^{\ast}$, subject to the following conditions:
\begin{enumerate}
\item[(RH1)] given two non-active rays $r_{-},r_{+}$ forming the boundary rays of a convex sector $\triangle\subset\mathbb C^{\ast}$, taken in clockwise order, then we require that
\[
Y_{r_2}(\hbar)=\mathbb S(\triangle)(Y_{r_1}(\hbar)),
\]
as meromorphic functions of $\hbar\in\mathbb H_{r_-}\cap\mathbb H_{r_+}$, where $\mathbb S(\triangle)$ is defined as in \eqref{ts};
\item[(RH2)] for each non-active ray $r\subset\mathbb C^{\ast}$ and each class $\gamma\in\Gamma$, we have 
\[
\mathrm{exp}(Z(\gamma)/\hbar)\cdot Y_{r,\gamma}(\hbar)\to\xi(\gamma),
\]
as $\hbar\to0$ in the half-plane $\mathbb H_r$.
\item[(RH3)] for each class $\gamma$ and any non-active ray $r\subset\mathbb C^{\ast}$, there exists $k>0$ such that 
\[
|\hbar|^{-k}<|Y_{r,y}(\hbar)|<|\hbar|^k,
\]
for all $\hbar\in\mathbb H_r$ with $|\hbar|\gg 0$.
\end{enumerate}
\end{problem}

In order to solve the Riemann-Hilbert problem, it is crucial to analyze the asymptotic properties of the solutions to certain differential equations. In the context of quadratic differentials, this involves analyzing the Schr\"{o}dinger equation, which has been well studied in \cite{W, S1, S2}. For our purpose, we need to consider the following differential equation:
\begin{equation}
\label{ODE0}
\hbar^3y'''(z)+\varphi_a(z)y(z)=0,
\end{equation}
where $a\in\mathbb C^m$ and $\hbar$ is a complex parameter. The asymptotic properties of the solutions to \eqref{ODE0} are derived in \Cref{App:differential equation} using a similar approach as in \cite{S1}.

Let $S^m\subset\mathbb C^m$ be an open submanifold that contains points $a$ where zeros of $\varphi_a(z)$ are almost on a line in $\mathbb C$ and $\mathcal W(\varphi_a,\vartheta)$ is generic for $\vartheta\in\mathbb R$. For any point $a\in S^m$, we can use the unique quadratic refinement $g$, which satisfies the propety that $g(\gamma)=-1$ for every active class $\gamma\in\Gamma_a$, to identify the twisted torus $\mathbb T_{a,-}$ with the standard torus $\mathbb T_a=\mathbb T_{a,+}$. Let $\mathcal W(\varphi_a,\vartheta)$ be a generic spectral network for some $a\in S^m$. Consider the differential equation \eqref{ODE0} with $\hbar=1$. Based on the discussion in \Cref{sec4.1}, we can obtain $m+3$ distinct solutions $y_1,\dots, y_{m+3}$ with the property that for each $k\in\{1,\dots,m+3\}$, $y_k$ is subdominant along the asymptotic direction $l_{k}$. Define
\begin{equation}
\mathcal V(3,n):=\{(v_1,v_2,\dots,v_{n})\in Gr(3,n)\ |\
\mathrm{det}(v_i,v_{i+1},v_{i+2})\ne0, i\in\mathbb Z/n\mathbb Z\}, 
\end{equation}
and set
\[
V(3,n):=\mathcal V(3,n)/\mathrm{PGL(3,\mathbb C)}.
\]

We introduce an element $M(\varphi_a,\vartheta)\in V(3,m+3)$ as follows:
\[
M(\varphi_a,\vartheta):=
\begin{bmatrix}
y_1&y_2&\cdots&y_{m+3}\\
y_1'&y'_2&\cdots&y_{m+3}'\\
y_1''&y''_2&\cdots&y''_{m+3}\\
\end{bmatrix}.
\]
Let $r=e^{i\vartheta_0}\cdot\mathbb R$ be a non-active ray with respect to a WKB spectral network $\mathcal W(\varphi_a,\vartheta)$ for some $a\in S^m$. Under the identification of $\mathbb T_{a,-}$ with $\mathbb T_a$, the required solution map $Y_r:\mathbb H_r\to\mathbb T_a$ is constructed as a composition of the following two maps 
\begin{equation}
\begin{split}
\mathcal X_{r}: V(3,m+3)&\to\mathbb T_a\\
v&\mapsto (X_{a,\vartheta_0}(v):\gamma\mapsto X_{a,\vartheta_0,\gamma}(v)),
\end{split}
\end{equation}
where $X_{a,\vartheta,\gamma}(v)$ is the spectral coordinate with respect to $\gamma\in\Gamma_a$ for the spectral network $\mathcal W(\varphi_a,\vartheta)$ as defined in \eqref{spectral coordinate},
and 
\begin{equation}
F_r: \mathbb H_r\to V(3,m+3), \quad \hbar\mapsto \overline{M(\hbar^{-3}\varphi_a,0)}.
\end{equation}
Note that using the Gauss-Manin connection, the map $X_{a,\vartheta,\gamma}$ makes senses for $\gamma\in\Gamma_a$. 

We examine the reqiurements (RH1)-(RH3) of Problem \ref{main problem} for the solution map $Y_r$ in \Cref{sec.4.3}, \Cref{sec.4.4} and \Cref{sec.4.5} respectively. We conclude with
the following result.
\begin{theorem}
Let $\mathcal W(\varphi_a,\vartheta)$ be a WKB spectral network associated with the BPS structure $(\Gamma_a,Z_a,\Omega_a)$ for some $a\in S^m$.
Then the map $Y_r:\mathbb H_r\to\mathbb T_a$ provides a meromorphic solution to the Riemann-Hilbert problem with constant term $\xi=1\in\mathbb T_{a,-}$.
\end{theorem}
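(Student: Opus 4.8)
The plan is to verify the three conditions (RH1)--(RH3) separately for the composite map $Y_r=\mathcal X_r\circ F_r$, exploiting the fact that the half-plane family $F_r(\hbar)=\overline{M(\hbar^{-3}\varphi_a,0)}$ is the restriction to $\mathbb H_r$ of one fixed $V(3,m+3)$-valued function, so that \emph{all} of the $r$-dependence of $Y_r$ is carried by the coordinate map $\mathcal X_r$, i.e. by the choice of abelianization trees at the angle $\vartheta_0$. Two structural facts organize the whole argument. First, since \eqref{ODE0} has a trace-free companion matrix, the transport is in $\mathrm{SL}(3,\mathbb C)$ and every Wronskian $\det(y_i,y_j,y_k)$ is constant in $z$; hence $\overline{M(\hbar^{-3}\varphi_a,0)}$ is a well-defined point of $V(3,m+3)$ and each $\mathrm{SL}(V)$-invariant $A_{\mathrm T}$, and so each $X_{\mathrm T}$, depends on $\hbar$ alone. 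Second, by construction $[X_\gamma]=\gamma$, so the homology bookkeeping of \eqref{spectral coordinate} is available throughout. I treat (RH1), (RH2), (RH3) in \Cref{sec.4.3}, \Cref{sec.4.4}, \Cref{sec.4.5} respectively.

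For (RH1) I would reduce the jump to the wall-crossing content of \Cref{main_thm1}. On the overlap $\mathbb H_{r_-}\cap\mathbb H_{r_+}$ the two solution maps differ only through $\mathcal X_{r_-}$ versus $\mathcal X_{r_+}$, so it suffices to show $\mathcal X_{r_+}=\mathbb S(\triangle)\circ\mathcal X_{r_-}$ on the common image. As $\vartheta$ sweeps the sector $\triangle$ clockwise from $\vartheta_{0,-}$ to $\vartheta_{0,+}$ it crosses exactly the active rays lying in $\triangle$, and by \Cref{main_thm1} each such crossing mutates the extended cluster $\mathcal T(\varphi_a,\vartheta)$, transforming the spectral coordinates by the cluster transformation attached to the corresponding active class $\gamma$ with $\Omega_a(\gamma)=1$. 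I would then identify this cluster transformation with the Kontsevich--Soibelman factor $\mathbb S(\gamma)$ and compose over all active rays in $\triangle$ to obtain $\mathbb S(\triangle)$; the quadratic refinement $g$ with $g(\gamma)=-1$ on active classes is precisely what matches the signs in $\mathbb S$ to the mutation rule, which is why the identification $\mathbb T_{a,-}\cong\mathbb T_a$ is the correct one.

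For (RH2) I would feed the WKB asymptotics of \Cref{App:differential equation} into the $\mathrm{SL}(V)$-invariants. With the rescaling $\hbar^{-3}\varphi_a$, the subdominant solution along $l_k$ behaves like $\exp(-\hbar^{-1}\int^z x_k\,dz)$, where $x_k$ is the branch of $x^3=\varphi_a$ selected by subdominance; consequently the leading exponential of each invariant $A_{\mathrm T}$ entering $X_{\mathrm T}$ is controlled by the WKB phases attached to the arcs of the corresponding abelianization tree. The homology map $\mathrm T\mapsto[X_{\mathrm T}]$ is designed so that the monomial $X_\gamma$ with $[X_\gamma]=\gamma$ assembles these relative phases into the closed period $\oint_\gamma x\,dz=Z(\gamma)$, yielding $Y_{r,\gamma}(\hbar)=X_\gamma\sim\exp(-Z(\gamma)/\hbar)$ as $\hbar\to0$ in $\mathbb H_r$. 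Hence $\exp(Z(\gamma)/\hbar)\cdot Y_{r,\gamma}(\hbar)\to1=\xi(\gamma)$, which is exactly (RH2) with constant term $\xi=1$; the half-plane $\mathbb H_r$ is chosen to be the region where every such phase keeps the sign making this limit finite and nonzero.

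For (RH3) I would use the opposite degeneration: as $\hbar\to\infty$ the equation $\hbar^3 y'''+\varphi_a y=0$ tends to $y'''=0$, whose solution space is spanned by $1,z,z^2$, and the solutions $y_k$ together with the (constant) Wronskians depend analytically on $\hbar^{-1}$ near $\hbar^{-1}=0$ with at worst polynomial growth. Since each $X_{\mathrm T}$, hence each $X_\gamma$, is a ratio of such invariants, $|Y_{r,\gamma}(\hbar)|$ is squeezed between two powers of $|\hbar|$ for $|\hbar|\gg0$, giving (RH3). The main obstacle is (RH1): making rigorous the identification of the analytic Stokes jumps of the fundamental matrix $M$ across active rays with the abstractly defined automorphism $\mathbb S(\triangle)$, i.e. checking that the combinatorial mutation of abelianization trees furnished by \Cref{main_thm1} reproduces the Kontsevich--Soibelman transformation \emph{with its precise exponents and signs}, rather than merely a coordinate change of the same shape. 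The analytic crux behind (RH2) is the complementary task of controlling the subleading corrections in the rank-$3$ WKB expansion so that only the closed period survives in the limit.
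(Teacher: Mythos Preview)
Your treatment of (RH1) and (RH2) tracks the paper closely: the paper also reduces (RH1) to $\mathcal X_{r_+}=\mathbb S(l)\circ\mathcal X_{r_-}$ and then verifies this by the explicit case-by-case computations that you flag as ``the main obstacle'' (this is \Cref{transformation}); and for (RH2) the paper likewise substitutes the WKB asymptotics of \Cref{App:differential equation} into the Wronskians and deforms the integration contour onto the class $\gamma$ to produce $\exp(-Z(\gamma)/\hbar)$ (this is \Cref{hto0}). So on those two points your plan is essentially the paper's.

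Your argument for (RH3), however, has a genuine gap and differs from the paper's. Passing to the naive limit $\hbar\to\infty$ in $\hbar^3y'''+\varphi_a y=0$ gives $y'''=0$, but this destroys the Stokes-sector structure: for $y'''=0$ there are no subdominant directions, so the very \emph{definition} of the $m+3$ distinguished solutions $y_k$ degenerates, and the assertion that ``the solutions $y_k$ together with the Wronskians depend analytically on $\hbar^{-1}$ near $\hbar^{-1}=0$'' is neither justified nor obviously true. Even if each $y_k$ had some limit, you would still need to show that every $3\times3$ Wronskian appearing in the \emph{denominator} of a spectral coordinate stays bounded away from zero, and your ``ratio of polynomially bounded quantities'' step does not secure this.

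The paper avoids both problems by a different mechanism: it uses the $\mathbb C^\ast$-action rescaling $(z,a_j,\hbar)$ with weights $(3,3j,m+3)$, under which the equation is invariant, and chooses $c^{m+3}\hbar=1$ so that $\hbar$ stays fixed while the coefficients degenerate to those of $\varphi_0(z)=z^m$. This keeps the degree of the polynomial (hence the Stokes geometry and the definition of the $y_k$) intact throughout the limit. One is then left with computing the Wronskians for $\varphi_0=z^m$, which the paper does explicitly via the Symanzik--Sibuya rotation, obtaining $W[y_i,y_j,y_k]\propto(\omega_1^{-i}-\omega_1^{-j})(\omega_1^{-j}-\omega_1^{-k})(\omega_1^{-k}-\omega_1^{-i})\neq0$ for distinct $i,j,k$. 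This yields a well-defined limit point of $F_r(\hbar)$ in $V(3,m+3)$ (\Cref{htoinfty}), which is strictly stronger than the polynomial bound you aim for and gives (RH3) immediately.
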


\subsection{Polynomial cubic differentials and stability conditions}
The concept of a stability condition on a triangulated category was introduced in \cite{B1}, arising from the exploration of slope stability of vector bundles over curves and $\Pi$-stability of D-branes in string theory. After the developments of numerous mathematicians, the theory of stability conditions has become an important role in various branches of mathematics, such as mirror symmetry, Donaldson-Thomas invariants and cluster theory, etc. Kontsevich and Seidel proposed that moduli spaces of differentials might have a relationship with spaces of stability conditions on Fukaya-type categories of surfaces.
Subsequently, many results have been achieved in this field, including \cite{BMQS,BS,CHQ, HKK,I,IQ,KQ2}. The majority of these studies focus on developing connections between quadratic differentials and stability conditions. 
In contrast, our objective is to create links between cubic differentials and stability conditions, based on the preceding discussions on clusters and spectral coordinates from spectral networks.

Let $\mathcal C(m)\simeq\mathbb C^{\ast}\times\mathbb C^m$ be the vector space of polynomial cubic differentials $\varphi(z)dz^3$ of degree $m$ with nonzero leading coefficient. Denote by $\mathcal {MC}(m)$ the space of equivalence classes of polynomial cubic differentials in $\mathcal C(m)$. Consider a free abelian group $\Gamma$ of rank $2m-2$. A $\Gamma$-framing of $\varphi(z)dz^3\in\mathcal{MC}(m)$ is an isomorphism of abelian groups
\[
\theta:\Gamma\to\Gamma(\varphi).
\] 
We denote by $\mathcal{MC}(m)^{\Gamma}$ the space of pairs $(\varphi(z)dz^3,\theta)$ consisting of a cubic differential $\varphi(z)dz^3\in\mathcal{MC}(m)$ and a $\Gamma$-framing $\theta$ (cf. \Cref{def.5.1}). The open subspace $\mathcal{MC}(m)_{1}$ of $\mathcal{MC}(m)$ is defined by
\[
\mathcal{MC}(m)_{1}:=\{\varphi(z)dz^3\in\mathcal{MC}(m): \text{zeros of }\, \varphi(z) \text{ are simple and almost on a line}\}.
\] 

Let us fix a base-point $\phi_0=(\varphi_0(z)dz^3,\theta_0)\in\mathcal{MC}(m)^{\Gamma}_1$, assuming that $\mathcal W(\varphi_0,0)$ is BPS-free and generic. Denote by $\mathcal{MC}(m)^{\Gamma}_{1,\ast}$ the connected component in $\mathcal{MC}(m)^{\Gamma}_{1}$ containing $\phi_0$. For simplicity, we refer to the $CY_3$ category associated with $\phi_0$ as $\mathcal D$, which possesses a canonical bounded t-structure with heart denoted as $\mathcal H$ (See \Cref{CY3fcubic} for details). We define $\mathrm{Stab}_0(\mathcal D)$ as the connected component in $\mathrm{Stab}(\mathcal D)$ containing stability conditions with heart $\mathcal H$. Let $\mathcal Aut_0(\mathcal D):=\mathrm{Aut}_{0}(\mathcal D)/\mathrm{Nil}_0(\mathcal D)$, where $\mathrm{Aut}_{0}(\mathcal D)$ is the subgroup of $\mathrm{Aut}(\mathcal D)$ that preserves $\mathrm{Stab}_0(\mathcal D)$ and $\mathrm{Nil}_0(\mathcal D)$ consists of automorphisms in $\mathrm{Aut}_{0}(\mathcal D)$ that act trivially on $\mathrm{Stab}_0(\mathcal D)$. By adapting the approach from \cite{BS} to our cubic case, we can derive the following result.

\begin{theorem}
There is a holomorphic embedding $K$ between complex manifolds that fits into a commutative diagram
\begin{equation}
\label{diagram6.1}
\xymatrix @C=5mm{
\mathcal{MC}(m)^{\Gamma}_{1,\ast}\ar[rr]^{K\qquad\,\,} \ar[rd]_{\pi_1}&&\mathrm{Stab}_{0}(\mathcal D)/\mathcal Aut_0(\mathcal D) \ar[ld]^{\pi}\\
&\mathrm{Hom}_{\mathbb Z}(\Gamma,\mathbb C)&
}
\end{equation}
and which commutes with the $\mathbb C$-actions on both sides.
\end{theorem}

Let $\mathcal{MC}(m)^{\Gamma}_{\ast,s}$ be an open subspace of $\mathcal{MC}(m)^{\Gamma}_{\ast}$ consisting of framed differentials $(\varphi(z)dz^3$, $\theta)$ where zeros of $\varphi(z)$ are distinct. We now propose the following conjecture as a potential avenue for future study: 
\begin{conjecture}
The map $K$ can be holomorphically extended to $\mathcal{MC}(m)^{\Gamma}_{\ast,s}$, denoted as $\widetilde{K}$:
\[
\widetilde{K}: \mathcal{MC}(m)^{\Gamma}_{\ast,s}\to\mathrm{Stab}_0(\mathcal D)/\mathcal Aut_0(\mathcal D).
\]
\end{conjecture}
\subsection{A further example}
Lastly, we consider a specific WKB spectral network $\mathcal W(\varphi=\frac{1}{2}(-z^3+3z^2+2),\vartheta)$ in \Cref{further example}. Compared to cases where the zeros of polynomials are almost on a line in $\mathbb C$, $\mathcal W(\varphi,\vartheta)$ is in a different chamber, meaning that the BPS counts are different. We provide the cluster structures and spectral coordinates associated to $\mathcal W(\varphi=\frac{1}{2}(-z^3+3z^2+2),\vartheta)$. Note that solutions to the Riemann-Hilbert problem arising from $\mathcal{W}(\varphi,\vartheta)$ can also be constructed using a similar approach as demonstrated in \Cref{solution}. Moreover, $\mathcal{W}(\varphi,\vartheta)$ can also be linked to a stability condition as formulated in \Cref{cubicstab}.

\subsection*{Acknowledgement}
I would like to extend my sincere gratitude to my supervisor Yu Qiu for his exceptional suggestions and continuous support throughout this research journey. I am grateful to Dylan Allegretti for his stimulating discussions and constructive comments on the initial manuscript. I also wish to express my appreciation to Andrew Neitzke for his insightful suggestions. Furthermore, I  acknowledge the use of the Mathematica code \cite{N4} for drawing WKB spectral networks in this paper.



\section{WKB spectral networks}
In this section, we will review some basic concepts related to WKB spectral networks of rank 3 refferring to \cite{GMN1,GMN2, N1}, and discuss the case when zeros of the corresponding polynomials are almost on a line. Throughout this paper, whenever we mention WKB spectral networks, it is implicit that we are referring to the case of rank 3.

\subsection{The spectral curve, homology and periods}
\label{1.1}
Let $\varphi(z)=z^m+a_1z^{m-1}+\dots+a_{m-1}z+a_m$ be a polynomial of degree $m\ge2$ on $\mathbb C$. The \emph{spectral curve} associated to $\varphi(z)$ is defined by
\[
\Sigma(\varphi):=\{(x,z): x^3-\varphi(z)=0\}\subset\mathbb C^2.
\]
The projection map $\pi:(x,z)\mapsto z$ makes $\Sigma(\varphi)$ a branched $3$-fold cover of $\mathbb C$. Unless otherwise specified, we always assume that the polynomials have only simple zeros. Denote by $\mathrm{Zero}(\varphi)$ the set of zeros of $\varphi(z)$. Then the ramification points of $\Sigma(\varphi)$ are all of index $3$ over the $m$ zeros in $\mathrm{Zero}(\varphi)$. Using the Riemann-Hurwitz formula we obtain
\begin{equation}
(\text{genus}(\Sigma(\varphi)),\text{holes}(\Sigma(\varphi)))=
\begin{cases}
(m-2,3) \quad \text{for}\quad 3|m,\\
(m-1,1) \quad \text{otherwise}.
\end{cases}
\end{equation}
The following lattice will be utilized frequently 
\[
\Gamma(\varphi)=H_1(\Sigma(\varphi),\mathbb Z),
\]
and it is easy to see
\[
\mathrm{rank}(\Gamma(\varphi))=2m-2.
\]
Note that $\Gamma(\varphi)$ is naturally equipped with the skew-symmetric intersection pairing $\langle-, -\rangle$, and the \emph{period map}
\begin{equation}
\label{central charge}
Z:\Gamma(\varphi)\to\mathbb C,\quad \gamma\mapsto Z_{\gamma}:=\oint_{\gamma}xdz.
\end{equation}

\subsection{WKB spectral networks}
\label{WKB spectral networks}
Given $\vartheta\in\mathbb R$, a \emph{WKB $\vartheta$-trajectory} $\mathcal W(\varphi,\vartheta)$ is a path $z(t)$ on $\mathbb C$ which satisfies a first-order differential equation:
\begin{equation}
\label{ODE}
(x_i(t)-x_j(t))\frac{\mathrm{d}z(t)}{\mathrm{d}t}=e^{i\vartheta}.
\end{equation}
where $\{(x_i(t),x_j(t))\}$ represents an ordered pair of distinct sheets from $\Sigma(\varphi)$ lying over $z(t)$. Note that the WKB $\vartheta$-trajectories will start from one of the zero point of $\varphi(z)$ and end on the other zero point or extend to the infinity. The construction of the network $\mathcal W(\varphi,\vartheta)$ proceeds in several steps. We start with the $8n$ critical trajectories emanating from the $n$ zeros of $\varphi(z)$, and extend them to $t\to+\infty$ by integrating the differential equation \eqref{ODE}. These trajectories are included in $\mathcal W(\varphi,\vartheta)$. Then we iteratively add more trajectories to $\mathcal W(\varphi,\vartheta)$ as follows.

We examine the intersections between trajectories already included in $\mathcal W(\varphi,\vartheta)$. Each intersection gives rise to three possibilities: the trajectories meet head-on, they intersect in an angle $\frac{\pi}{3}$, or they intersect in an angle $\frac{2\pi}{3}$. We will add a new trajectory only when the intersection angle is $\frac{2\pi}{3}$ as follows. The fact that the intersection angle is $\frac{2\pi}{3}$ implies that the labels of the intersecting trajectories are of the form $(i_1,j_1)=(i,j)$ and $(i_2,j_2)=(j,k)$. We add a new WKB $\vartheta$-trajectory starting from the intersection point, with the label $(i,k)$ (cf. Figure \ref{new trajectory}).
\begin{figure}
\centering
\includegraphics[width=0.2\textwidth]{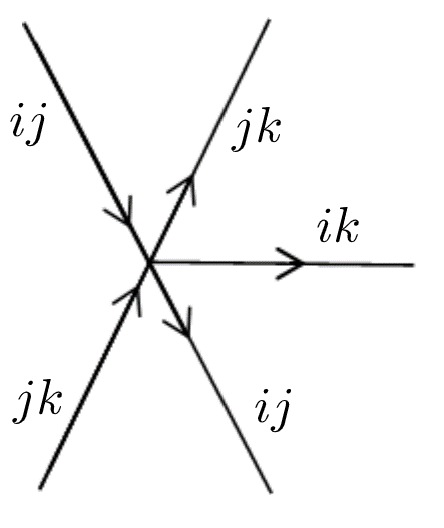}
\caption{A new trajectory carrying label $(i,k)$.}
\label{new trajectory}
\end{figure}

As before, we extend these new trajectories to $t\to+\infty$, which may create new intersections between trajectories. We then repeat the process, allowing these new intersections give rise to new trajectories, and so on, until we have a complete collection of trajectories, which we denote by $\mathcal W(\varphi,\vartheta)$.

We say that the $\mathcal W(\varphi,\vartheta)$ is \emph{BPS-free} if no trajectory runs into another zero of $\varphi(z)$ at some finite $t$. Otherwise, if such a trajectory exists, we say that $\mathcal W(\varphi,\vartheta)$ is \emph{BPS-full}.

\subsection{Asymptotices of WKB spectral networks}
\label{asymptotices}
The behavior of $\mathcal W(\varphi,\vartheta)$ near $z\to\infty$ is characterized by the fact that the WKB $\vartheta$-trajectories approach $(2m+6)$ asymptotic directions. To facilitate the analysis, we now compactify $\mathbb C$ to $\overline{\mathbb C}=\mathbb C\bigsqcup S^1$ (i.e., real blow-up of $\mathbb{CP}^1$ at $z=\infty$). The $2m+6$ asymptotic directions correspond to $2m+6$ marked points on this circle. A marked point is labeled by an ordered pair of sheets $(i,j)$ if all the WKB $\vartheta$-trajectories asymptotic to it satisfy \eqref{ODE}. The WKB $\vartheta$-trajectories asymptotic to a $ij$ marked point will now be labeled by an orderd pair $ij$. When we move counterclockwise from one marked point to the next, one label remains the same while the other changes. This pattern of alternating labels continues in the order $ij,ik,jk,ji,ki,kj,\dots$ for consecutive rays. 

These marked points divide the circle at infinity into $(2m+6)$ arcs. We use the term \emph{initial (final)} to describe an arc at infinity whose two boundary points have the same initial (final) label. There are exactly $(m+3)$ arcs of each type. Each asymptotic direction $l_r$ lies at the midpoint of one of the final arcs. Thus we can label each $l_r$ by the final label for the two nearest boundary rays, which we refer to as the \emph{fading sheet} at $l_r$.

\subsection{Abelianization trees}
\label{compatible trees}
An \emph{abelianization tree compatible with} $\mathcal W(\varphi,\vartheta)$ is defined to be a collection of oriented arcs in $\overline{\mathbb C}$, where each arc is labeled by a sheet of $\Sigma(\varphi)$ and a representation of $\mathrm{SL}(3,\mathbb C)$ (either fundamental $V$ or its dual $V^{\ast}$), with the following constraints:
\begin{figure}
\centering
\includegraphics[width=0.6\textwidth]{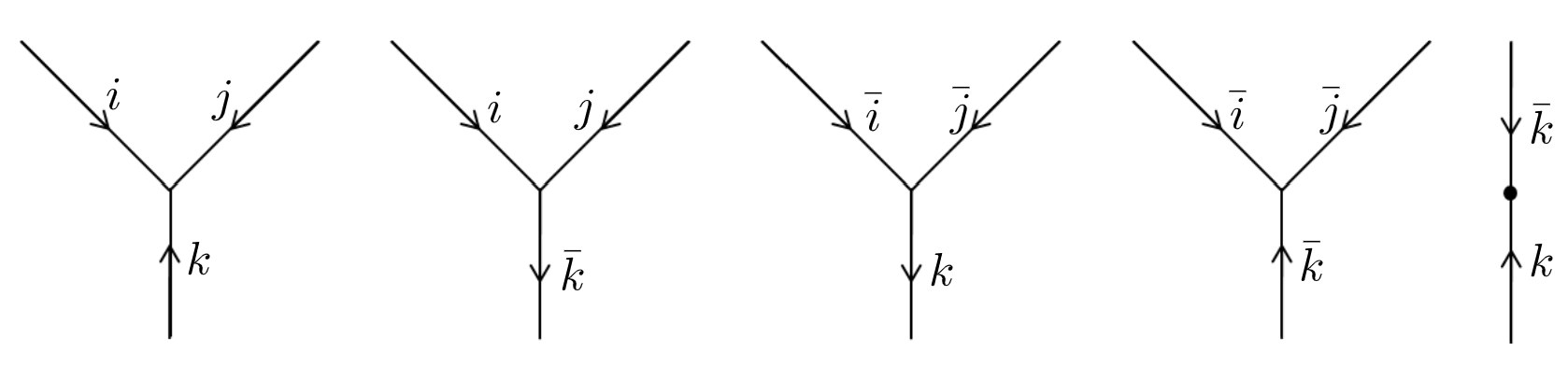}
\caption{Junctions for abelianization trees}
\label{Junctions}
\end{figure}
\begin{itemize}
\item Each arc has two endpoints, with the initial point of each arc lying at one of the $l_r$ on the circle at infinity, or at a junction as shown in Figure \ref{Junctions}. The endpoint of each arc must lie at a junction.
\item If an arc ends at $l_r$, then it carries the representation $V$, and its sheet label matches the fading sheet at $l_r$.
\item The arcs of the abelianization tree do not intersect one another.
\item No arc carrying the label $i$ and representation $V$ should intersect with a trajectory of $\mathcal W(\varphi,\vartheta)$ carrying a label $(i,j)$.
\item No arc carrying the label $i$ and representation $V^{\ast}$ should intersect with a trajectory of $\mathcal W(\varphi,\vartheta)$ carrying a label $(j,i)$.
\end{itemize}

The \emph{nodes} of compatible abelianization trees are defined as the endpoints of the arcs in the first four pictures of Figure \ref{Junctions}. The nodes that lie on the boundary of $\overline{\mathbb C}$ are referred to as \emph{internal nodes}, while the remaining nodes are called \emph{boundary nodes}. 

The \emph{faces} of a compatible abelianization tree are defined as the polygons on $\overline{\mathbb C}$ with the nodes serving as vertices and arcs from abelianization trees as edges. Note that each edge can consist of one or two arcs by definition.

\begin{definition}
A \emph{bipartification} on a compatible abelianization $\mathrm{T}$ is a coloring of its nodes into two colors, black and white, such that no two nodes with the same color are adjacent.
\end{definition}

\begin{lemma}
\label{Bipartification}
Any compatible abelianization tree can be bipartified.
\end{lemma}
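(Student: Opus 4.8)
The plan is to produce the coloring explicitly from the orientation and representation data carried by the arcs, rather than checking the absence of odd cycles by hand. Model the compatible abelianization tree $\mathrm{T}$ as a planar graph $G$ embedded in the disk $\overline{\mathbb{C}}$, whose vertices are the nodes of $\mathrm{T}$ and whose edges are the edges of $\mathrm{T}$ (each one a single arc or a concatenation of two arcs). Since $G$ is a plane graph, it suffices to exhibit a $2$-coloring of the vertices in which the two ends of every edge receive different colors; equivalently, one shows $G$ has no odd cycle. I will obtain such a coloring from the local structure of the junctions in Figure \ref{Junctions}.

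First I would classify the internal nodes. Inspecting the first four junction pictures of Figure \ref{Junctions}, together with the $\mathrm{SL}(3,\mathbb{C})$-invariance that governs how the representation labels $V$ and $V^{\ast}$ may be glued, each node is forced to be of exactly one of two kinds: a \emph{source}, at which all incident arcs are oriented away from the node, or a \emph{sink}, at which all incident arcs are oriented toward the node. Color every source black and every sink white. The $l_r$ endpoints on the circle at infinity carry the representation $V$ with a fixed orientation by the defining constraints, so the nodes adjacent to them inherit a consistent color and cause no ambiguity.

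It then remains to check that every edge of $G$ joins a black node to a white node. For an edge consisting of a single arc this is immediate: the arc has one tail and one head, so one endpoint sees it pointing away (a source) and the other sees it pointing in (a sink). For an edge consisting of two arcs, the intermediate meeting point is a $2$-valent junction appearing among the later pictures of Figure \ref{Junctions}, and is not itself a node; I would verify, case by case on these junction types, that the orientation (and the accompanying $V \leftrightarrow V^{\ast}$ transition) flows through the meeting point, so that the edge still runs from a source to a sink. Granting this, adjacent nodes always carry opposite colors, and the coloring is a bipartification.

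The main obstacle is exactly this last verification: one must confirm, from the explicit list of allowed junctions in Figure \ref{Junctions}, that the source/sink dichotomy is exhaustive at every node and, crucially, that a two-arc edge can never have both of its arcs oriented outward (or both inward) at its endpoints, since otherwise it would join two nodes of the same type and destroy bipartiteness. Equivalently, in the language of faces, one may recast the whole argument as the statement that every face of $\mathrm{T}$ is bounded by an even number of edges, whence the connected plane graph $G$ is bipartite; the even-gon property is again read off from the junction constraints, and this is where the essential combinatorial content lies.
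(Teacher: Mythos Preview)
Your strategy is sound and is in fact the standard way bipartiteness is explained for $\mathrm{SL}_3$ webs in the tensor-diagram literature (cf.\ \cite{FP}): each trivalent interior vertex is either of type $\wedge^3 V\to\mathbb C$ or of type $\wedge^3 V^{\ast}\to\mathbb C$, and this dichotomy \emph{is} the $2$-coloring. The paper takes a different route: rather than building the coloring directly, it argues by contradiction that no face of $\mathrm{T}$ can have an odd number of edges. Fixing a hypothetical odd face $F$, the paper picks an arc $b$ on $\partial F$, lists the two possible $(V,V^{\ast})$-decorations it can carry, and then propagates the junction constraints arc by arc around $F$ (Figures~\ref{choices of arc b}--\ref{choices of d}); after one full circuit the decoration forced on the last arc is incompatible with the first, yielding the contradiction. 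A separate sub-case treats faces that touch a boundary node through two edges.

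What your approach buys is conceptual clarity: once the source/sink dichotomy is established, bipartiteness is immediate and no cycle-chasing is needed. What the paper's approach buys is that it actually carries out the case analysis you defer. Your proposal is honest about this---you flag the two-arc edge case and the exhaustiveness of the source/sink classification as ``the main obstacle'' and ``where the essential combinatorial content lies''---but as written it is a plan, not a proof. In particular, the bivalent junctions (the later pictures in Figure~\ref{Junctions}) can involve a change of representation $V\leftrightarrow V^{\ast}$ as well as a change of sheet label, and you would need to check explicitly that in every allowed bivalent type the orientation data still routes the edge from a source-node to a sink-node; this is precisely the content of the paper's Figures~\ref{choices of arc a}--\ref{choices of d}. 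If you fill in that verification, your argument is complete and arguably cleaner than the paper's; without it, the proof has the same gap you yourself identify.
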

\begin{proof}
Let $\mathrm{T}$ be a compatible abelianization tree on a WKB spectral network $\mathcal W(\varphi,\vartheta)$. The statement is clear if $\mathrm{T}$ does not contain any faces with an odd number of edges. Assume that $\mathrm{T}$ has a face $F$ with $(2k+1)$ nodes for some positive integer $k$. We start with the case when $F$ dose not contain two edges arising from a boundary node. Fix an arc $b$ from some edge of $F$ with two adjacent arcs $a,c$. There are only two representations of $b$ that we need to take into consideration, as shown in Figure \ref{choices of arc b}.
\begin{figure}
\centering
\includegraphics[width=0.4\textwidth]{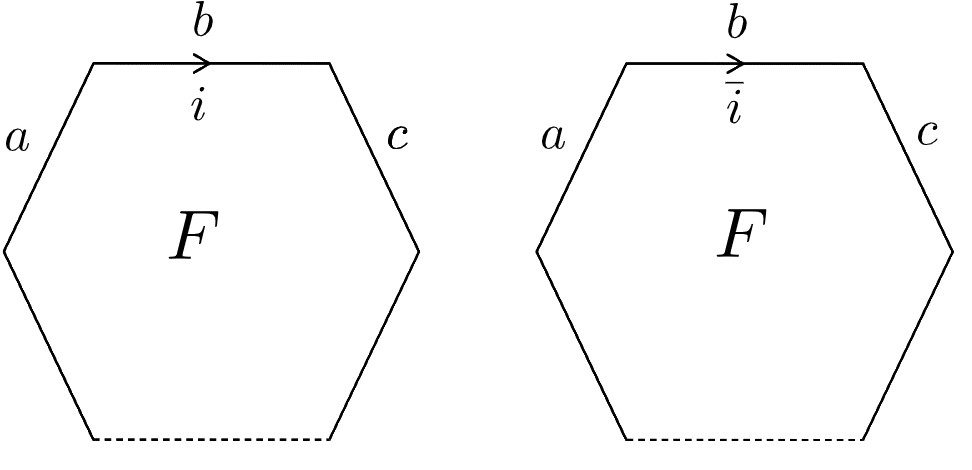}
\caption{Choices of arc $b$}
\label{choices of arc b}
\end{figure}
According to the definition of compatible abelianization trees, there are only two choices of representations of arc $a$ presented in Figure $\ref{choices of arc a}$.
\begin{figure}
\centering
\includegraphics[width=0.4\textwidth]{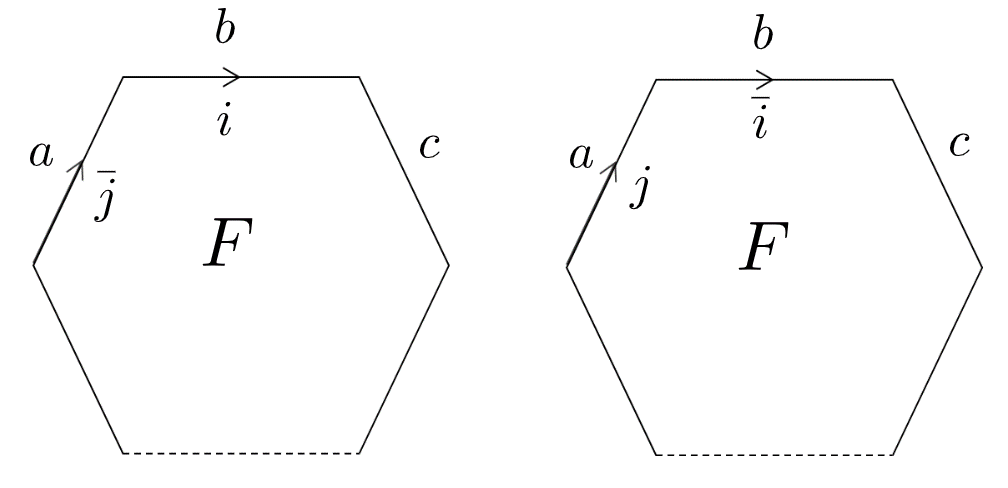}
\caption{Choices of arc $a$}
\label{choices of arc a}
\end{figure}
The depiction of the other adjacent edge of $a$ can also be determined in a similar manner. We continue this process and finally, there are only two possible representations of arc $c$ that need to be considered, as shown in Figure \ref{choices of arc c}. However, both of these choices do not comply with the notion of compatible abelianization trees.
\begin{figure}
\centering
\includegraphics[width=0.4\textwidth]{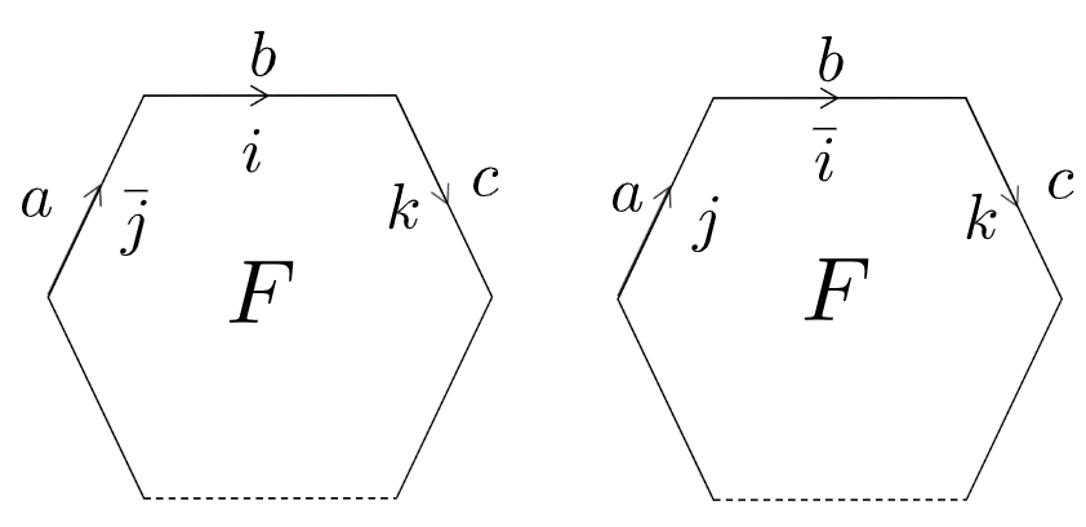}
\caption{Choices of arc $c$}
\label{choices of arc c}
\end{figure}

Now we consider the case when $F$ contains two arcs, denoted by $a,b$ emanating from the same boundary node. Note that at least one arc in $F$ has an orientation that differs from $a$ and denote by $c$ an arc in $F$ that is nearest and opposite in direction to $a$. There are two choices of labelings of $c$ as shown in Figure \ref{choices of c}.
\begin{figure}
\centering
\includegraphics[width=0.4\textwidth]{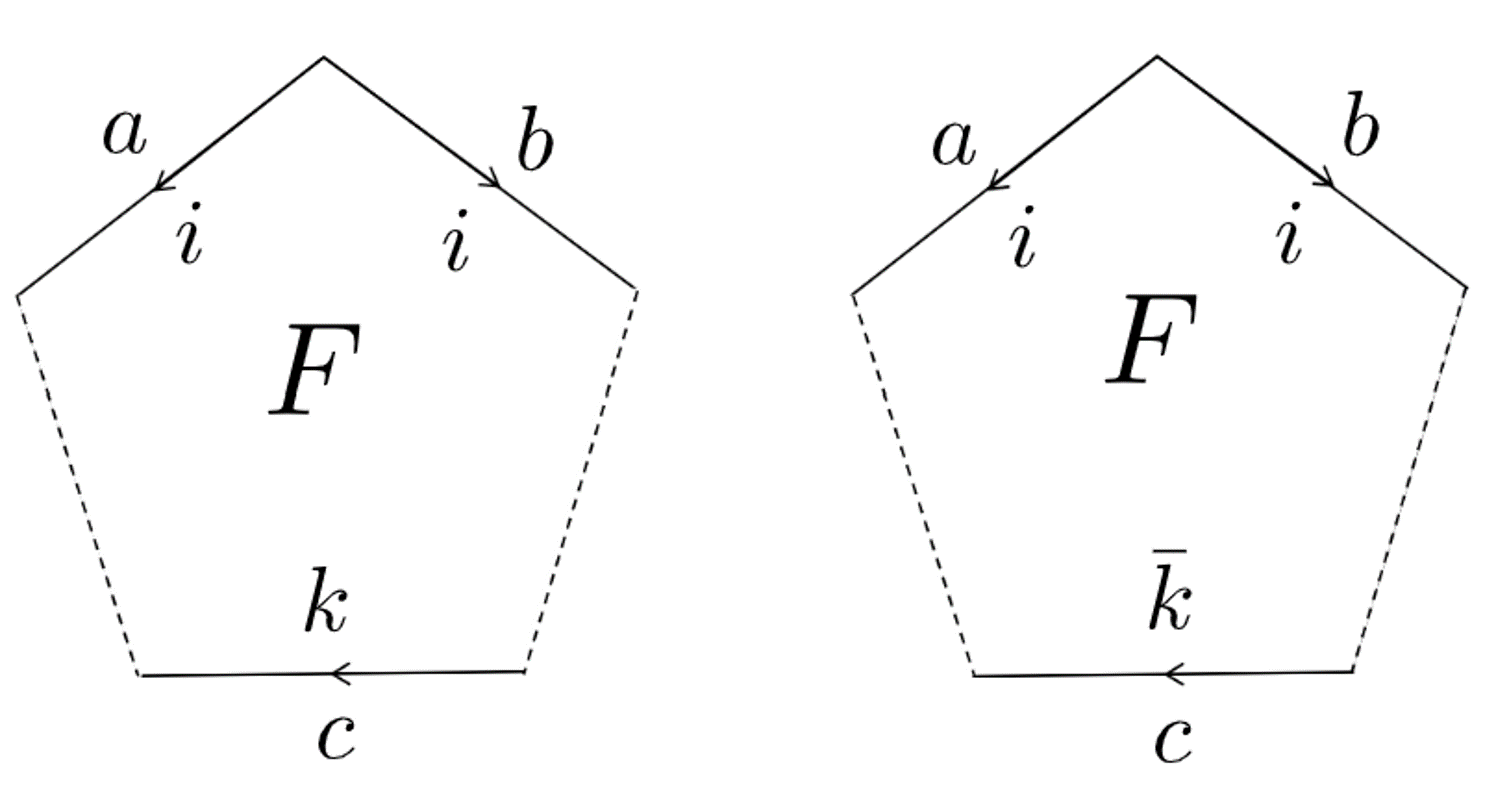}
\caption{Choices of arc $c$}
\label{choices of c}
\end{figure}
By the discussion above we conclude that the number of edges from $b$ to $c$ is odd in the left case, whereas in the right case, the number of edges from $b$ to $c$ is even. As illustrated in Figure \ref{choices of d}, $d$, the adjacent arc in front of $c$, has four options. 
\begin{figure}
\centering
\includegraphics[width=0.6\textwidth]{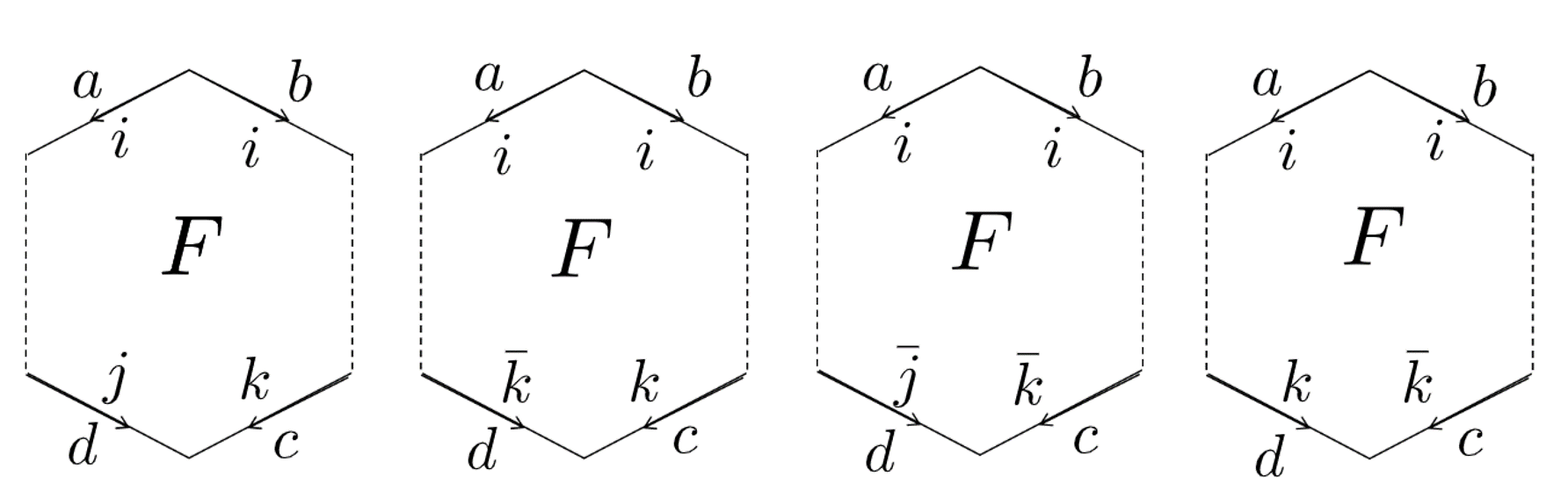}
\caption{Choices of arc $d$}
\label{choices of d}
\end{figure}
In the first situation, it can be concluded that $F$ must have an even number of edges since there are odd edges connecting $a$ to $d$ and $b$ to $c$. This contradicts our initial assumption that $F$ has an odd number of edges. Similarly, in the other situations, we observe that $F$ also has an even number of edges, which contradicts our assumption.

Noting that our previous discussion is unaffected, the branch cuts just impact how those arcs are labeled. We conclude that any compatible abelianization tree can be bipartified.
\end{proof}

Some examples of compatible abelianization trees on various WKB spectral networks are shown in Figures \ref{ex:compatible tree m=3}, \ref{ex:compatible tree m=4}, and \ref{ex:compatible tree m=5}. Note that the compatible abelianization tree in Figure \ref{ex:compatible tree m=5} has two edges coming from a boundary node. 

For simplicity, we denote an abelianization tree with asymptotic directions $l_{i_1},\dots,l_{i_s}$, ordered counterclockwise as $\mathrm T_{i_1,\dots,i_s}$. It should be noted that the asymptotic directions from different compatible abelianization trees may coincide, and in such cases, we will use additional notation. Let $\mathcal T(\varphi,\vartheta)$ denote the collection of all compatible abelianization trees associated with a spectral network $\mathcal W(\varphi,\vartheta)$.

\begin{figure}
\centering
\includegraphics[width=0.39\textwidth]{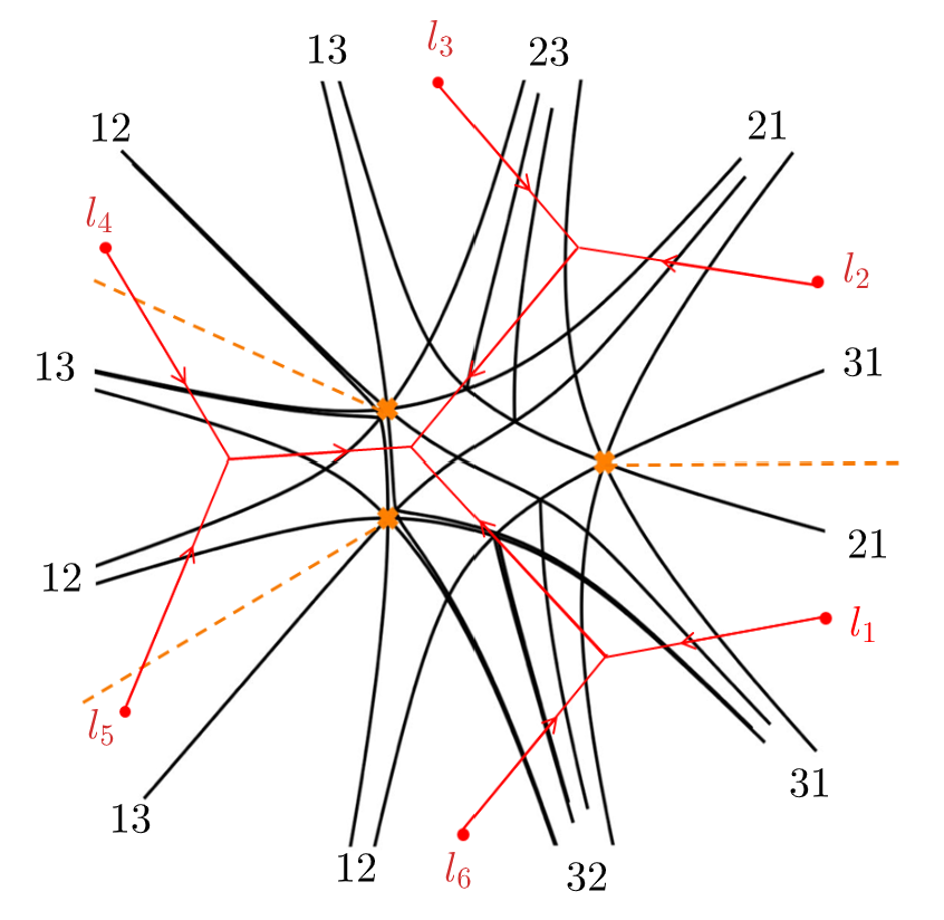}
\caption{A compatible tree when $m=3$.}
\label{ex:compatible tree m=3}
\end{figure}

\begin{figure}
\centering
\includegraphics[width=0.39\textwidth]{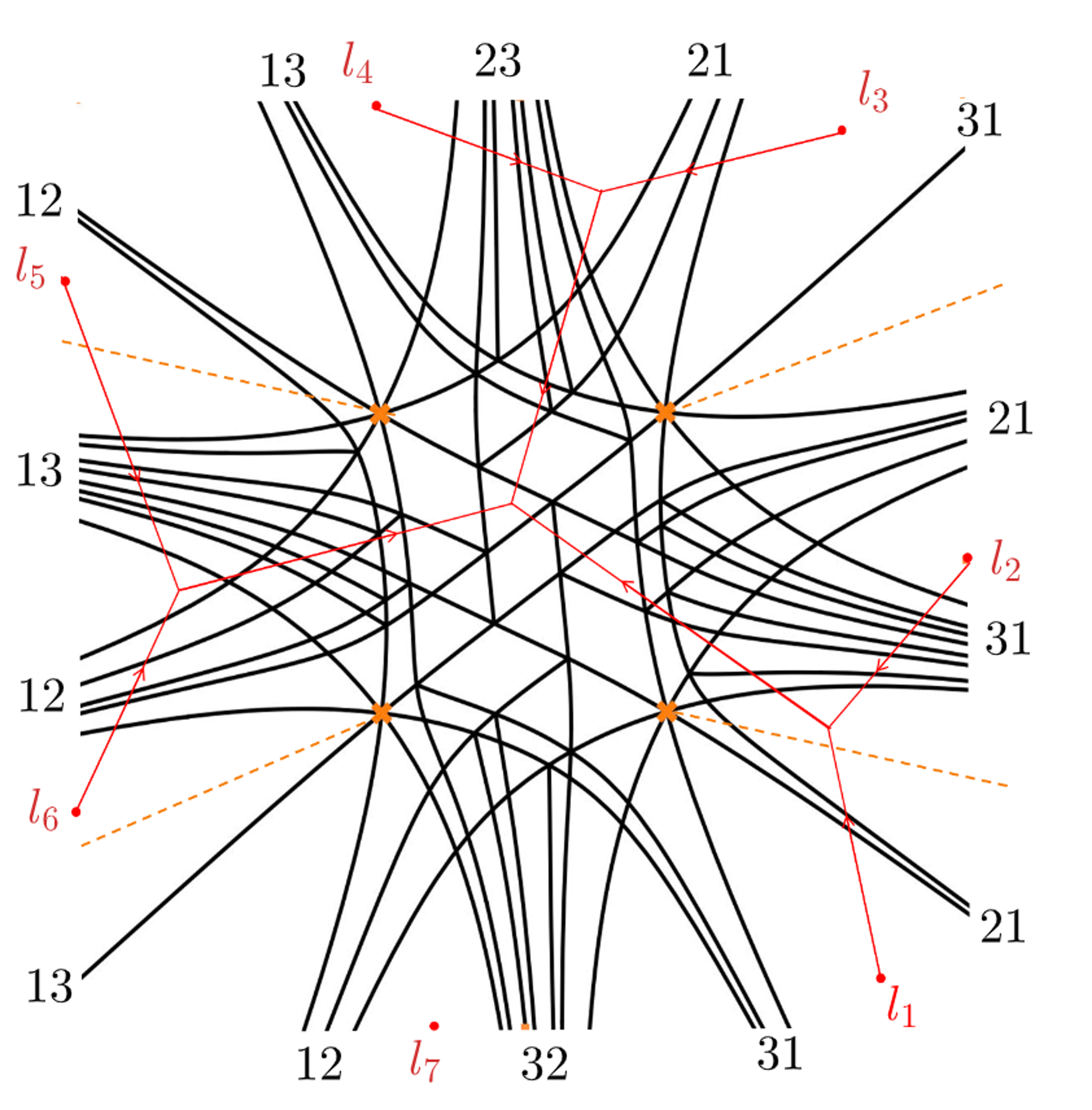}
\caption{A compatible tree when $m=4$.}
\label{ex:compatible tree m=4}
\end{figure}

\begin{figure}
\centering
\includegraphics[width=0.39\textwidth]{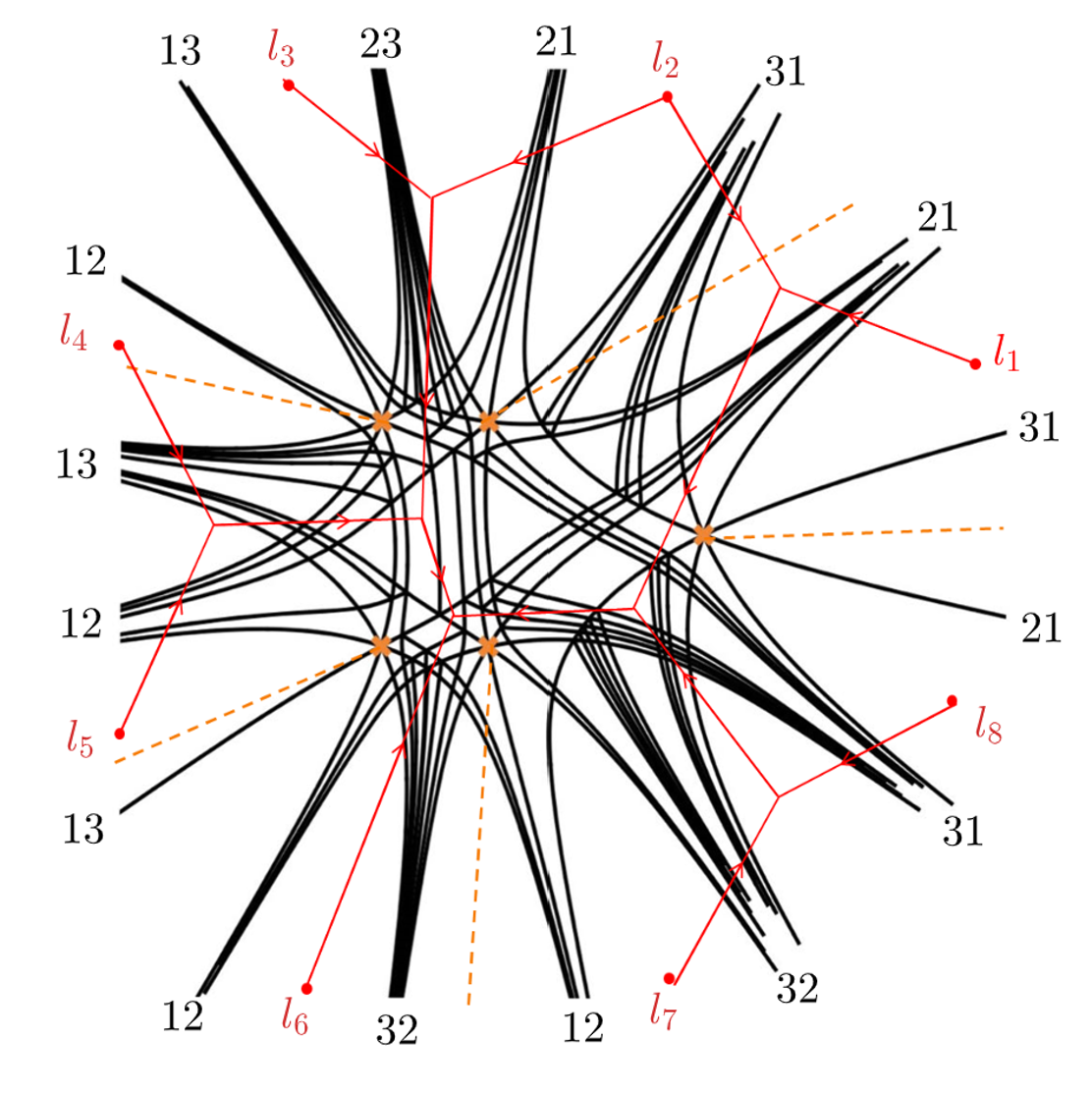}
\caption{A compatible tree when $m=5$.}
\label{ex:compatible tree m=5}
\end{figure}

\subsection{BPS counts}
\label{BPS_counts}
For any WKB $\vartheta$-trajectory $p$, there is a way for us to obtain a canonical lift to a 1-chain $\widetilde p$ on $\Sigma(\varphi)$. Assuming that $p$ is labeled by the pair of sheets $(i,j)$, $\widetilde p$ is obtained by taking the closure of the lift of $p$ to sheet $i$, and adding the orientation-reversal of the closure of the lift of $p$ to sheet $j$. A \emph{finite web of charge} $\gamma\in\Gamma(\varphi)$ is a collection of trajectories $p_l$ such that $\sum_l\widetilde p_l$ is a closed 1-cycle in the homology class $\gamma$. Equation \eqref{ODE} implies that if there exists a finite web of charge $\gamma$ within a WKB network $\mathcal W(\varphi,\vartheta)$, then we must have $\vartheta=\mathrm{arg}\,Z_{\gamma}$. Furthermore, in this case the network $\mathcal W(\varphi,\gamma)$ is BPS-ful. The \emph{BPS count} $\Omega(\varphi,\gamma)\in\mathbb Z$ is a count of the finite webs of charge $\gamma$ which occur within the WKB network $\mathcal W(\varphi,\vartheta=\mathrm{arg} Z_{\gamma})$. The term ``count'' here is defined by an algorithm that is explained in \cite{GMN2}. We will not describe this algorithm here, as it can be quite complex in the general case. Figure \ref{BPS counts} depicts two types of webs: one is a single critical trajectory connecting two zeros of $\varphi_a$ and the other is a three-string junctions; both of these contribute a value of $+1$ to $\Omega(\varphi,\gamma)$.

\begin{figure}
\centering
\includegraphics[width=0.3\textwidth]{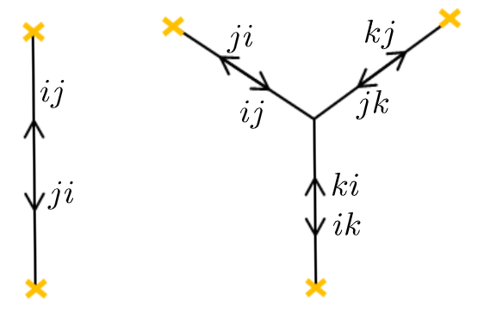}
\caption{Two examples of webs.}
\label{BPS counts}
\end{figure}

It should be mentioned that the approach we used to define the BPS invariants in this context is primarily a geometric one, and it is practical for lower dimensions. However, for more complex examples, it may be difficult to use in practice.

\subsection{WKB spectral networks when zeros are alomst on a line}
We start with a WKB spectral network $\mathcal W(\varphi,\vartheta)$ where zeros of $\varphi(z)$ are on a line in $\mathbb C$. Along this line, we can naturally order the zeros of $\varphi(z)$, which we denote as $\{z_1,z_2,\dots,z_m\}$. Now we choose a basis for $\Gamma(\varphi)$ as follows. For each pair of adjacent zeros $\{z_i,z_{i+1}\}$, we assign a pair of associated classes $\{\gamma_{i,i+1}^{12},\gamma_{i,i+1}^{23}\}$ as shown in Figure \ref{generators}. Then, $\{\gamma_{i,i+1}^{12},\gamma_{i,i+1}^{23}\}_{i=1}^{m-1}$ forms a basis for $\Gamma(\varphi)$. Denote by $\gamma_{i,i+1}^{jk}=-\gamma_{i,i+1}^{kj}$ and $\gamma_{i,i+1}^{13}=\gamma_{i,i+1}^{12}+\gamma_{i,i+1}^{23}$. 

\begin{figure}
\centering
\includegraphics[width=0.3\textwidth]{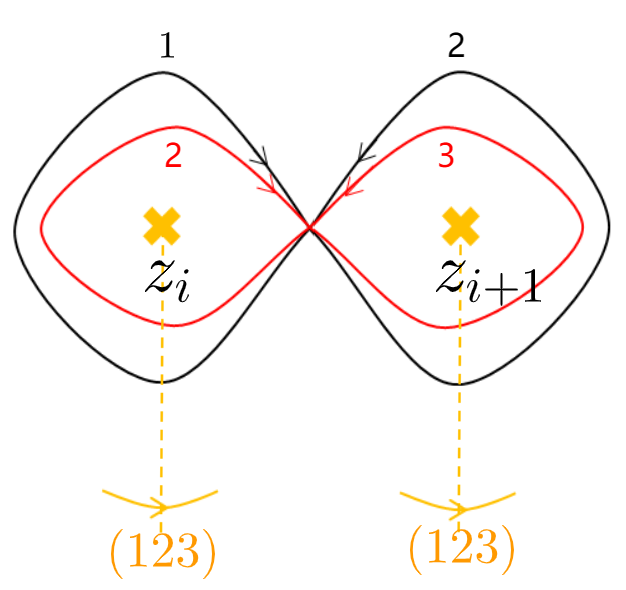}
\caption{Two classes $\gamma^{12}_{i,i+1}$(in black), $\gamma^{23}_{i,i+1}$ (in red) associated with adjacent zeros $\{z_i,z_{i+1}\}$. The dotted lines represent branch cuts, and crossing a branch cut in the direction indicated by the arrow induces the permutation $(123)$ of sheet labels. The numerical labels next to the paths indicate which sheet the paths lie on.}
\label{generators}
\end{figure}
The following result can be obtained by definitions:
\begin{proposition}
\label{dependence}
\begin{enumerate}
\item For any pair of generators $\gamma^{12}_{i,i+1},\gamma^{23}_{i,i+1}$, where $ 1\le i\le m-1$, 
\[
Z_{\gamma^{12}_{i,i+1}}/Z_{\gamma^{23}_{i,i+1}}=e^{2\pi i/3}.
\]
\item For any pair of generators $\gamma^{kl}_{i,i+1},\gamma^{kl}_{j,j+1}$, where $k\ne l\in\{1,2,3\}$ and $i\ne j\in\{1,2,\dots,m-1\}$.
\[
Z_{\gamma^{kl}_{i,i+1}}/Z_{\gamma^{kl}_{j,j+1}}\in\mathbb R.
\]
\end{enumerate}
\end{proposition}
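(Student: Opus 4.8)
The plan is to reduce both statements to the behaviour of the single scalar integral $I_i := \int_{z_i}^{z_{i+1}} \varphi(z)^{1/3}\,dz$ along the line of zeros, computed with a fixed branch of $\varphi^{1/3}$. The starting observation is that over any point $z$ the three sheets of $\Sigma(\varphi)$ are $\varphi(z)^{1/3}$, $\omega\,\varphi(z)^{1/3}$, $\omega^2\varphi(z)^{1/3}$ with $\omega=e^{2\pi i/3}$; relative to the fixed branch, the sheet locally labelled $k$ equals $\omega^{n_k}\varphi^{1/3}$ for some $n_k\in\{0,1,2\}$. Since all three sheets coincide (at $x=0$) over each zero of $\varphi$, the chain running from $z_i$ to $z_{i+1}$ on sheet $k$ and back on sheet $l$ is automatically closed, so that
\[
Z_{\gamma^{kl}_{i,i+1}}=\int_{z_i}^{z_{i+1}}(x_k-x_l)\,dz=(\omega^{n_k}-\omega^{n_l})\,I_i .
\]
This identity is the common engine for both parts.

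For part (1) the two cycles live over the \emph{same} segment, so $I_i$ is identical and only the sheet coefficients differ. Using $x_1-x_2=(1-\omega)\varphi^{1/3}$ and $x_2-x_3=(\omega-\omega^2)\varphi^{1/3}=\omega(1-\omega)\varphi^{1/3}$, I would simply divide:
\[
\frac{Z_{\gamma^{12}_{i,i+1}}}{Z_{\gamma^{23}_{i,i+1}}}=\frac{1-\omega}{\omega(1-\omega)}=\omega^{-1},
\]
which equals $e^{2\pi i/3}$ once the orientation and sheet-labelling conventions fixed in Figure \ref{generators} are taken into account.

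For part (2) the sheet coefficient is the same type but the segment changes, so everything rests on comparing the phases of the $I_i$. Writing the line of zeros as $z=z_0+s\,e^{i\alpha}$ with real parameters $t_1<\dots<t_m$ of the zeros, I would substitute $dz=e^{i\alpha}ds$ and $\varphi(z)=e^{im\alpha}\prod_k(s-t_k)$ to get
\[
I_i=e^{i\alpha(1+m/3)}\int_{t_i}^{t_{i+1}}\left(\prod_k(s-t_k)\right)^{1/3}ds .
\]
On the open interval $(t_i,t_{i+1})$ the real polynomial $\prod_k(s-t_k)$ has constant sign $(-1)^{m-i}$, so its cube root equals $e^{i(m-i)\pi/3}$ times the positive real integrand $\lvert\prod_k(s-t_k)\rvert^{1/3}$; hence $I_i=e^{i\alpha(1+m/3)}e^{i(m-i)\pi/3}R_i$ with $R_i>0$. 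The global prefactor $e^{i\alpha(1+m/3)}$ cancels in the ratio, leaving a drift phase $e^{i(j-i)\pi/3}$ of $\arg\varphi^{1/3}$ together with the residual phase of the coefficient $\omega^{n_k}-\omega^{n_l}$, which now depends on the segment through the branch data.

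The crux, and the step I expect to be the main obstacle, is to show these two residual phases conspire to a multiple of $\pi$. The key input is the branch-cut data of Figure \ref{generators}: passing from segment $i$ to segment $i+1$ crosses exactly one cut, whose monodromy $(123)$ shifts the local sheet indices so that $\omega^{n_k(i)}-\omega^{n_l(i)}=\omega^{\,j-i}\bigl(\omega^{n_k(j)}-\omega^{n_l(j)}\bigr)$. The resulting coefficient phase $\tfrac{2\pi(j-i)}{3}$ then combines with the drift phase $\tfrac{(j-i)\pi}{3}$ to give a total phase difference of exactly $(j-i)\pi$ between $Z_{\gamma^{kl}_{i,i+1}}$ and $Z_{\gamma^{kl}_{j,j+1}}$, whence
\[
\frac{Z_{\gamma^{kl}_{i,i+1}}}{Z_{\gamma^{kl}_{j,j+1}}}=(-1)^{j-i}\,\frac{R_i}{R_j}\in\mathbb R .
\]
The delicate bookkeeping is precisely this matching of the $\pi/3$ drift against the $2\pi/3$ relabelling; I would verify it by tracking $\arg\varphi^{1/3}$ across a single zero and confirming that the cut convention of Figure \ref{generators} produces the index shift in the stated direction.
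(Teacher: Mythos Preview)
The paper gives no proof of this proposition; it simply asserts that the result ``can be obtained by definitions.'' Your proposal is therefore not competing with any argument in the paper but rather supplying the computation the paper omits, and the approach you take---reducing both parts to the scalar integral $I_i=\int_{z_i}^{z_{i+1}}\varphi^{1/3}\,dz$ and the sheet coefficients $\omega^{n_k}-\omega^{n_l}$---is exactly the natural way to unpack that assertion.

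Part (1) is correct as you have it: over a single segment the three sheets differ only by powers of $\omega$, so the ratio of periods is a ratio of the coefficients $1-\omega$ and $\omega-\omega^2$, and the outcome is fixed up to the orientation and labelling conventions of Figure~\ref{generators}. For part (2), your decomposition of the phase of $Z_{\gamma^{kl}_{i,i+1}}$ into the drift $(m-i)\pi/3$ coming from the sign of $\prod_k(s-t_k)$ and the contribution of the sheet coefficient is sound, and the arithmetic $\tfrac{(j-i)\pi}{3}+\tfrac{2\pi(j-i)}{3}=(j-i)\pi$ is the right cancellation. The one genuinely sensitive point is the relation $\omega^{n_k(i)}-\omega^{n_l(i)}=\omega^{j-i}\bigl(\omega^{n_k(j)}-\omega^{n_l(j)}\bigr)$, which encodes how the labels in Figure~\ref{generators} are carried across the cut at each zero. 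You correctly flag this as the step requiring verification against the figure; once one checks that the cut convention there amounts to $n_k(i)\equiv n_k(j)+(j-i)\pmod 3$ (equivalently, that the label ``sheet $k$'' tracks the same real ray in each segment up to sign), the argument closes. Be aware that an incorrect guess for the cut direction would leave a residual phase of $\pm\pi/3$ rather than $\pi$, so this bookkeeping is not merely cosmetic.
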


\begin{definition}
\label{generic}
A WKB spectral network $\mathcal W(\varphi,\vartheta)$ is \emph{generic} if, for any two classes $\gamma_1,\gamma_2\in\Gamma(\varphi)$, we have 
\[
\mathbb R\cdot Z_{\gamma_1}=\mathbb R\cdot Z_{\gamma_2}\Longleftrightarrow \mathbb Z\cdot\gamma_1=\mathbb Z\cdot\gamma_2.
\]
\end{definition}

As a result, we obtain that $\mathcal W(\varphi,\vartheta)$ is non-generic if zeros of $\varphi(z)$ are on a line in $\mathbb C$. In addition, we can create a generic WKB spectral network from $\mathcal W(\varphi,\vartheta)$ by perturbing the zeros of $\varphi(z)$ slightly. In this case, the BPS counts $\Omega(\varphi,-)$ associated with $\mathcal W(\varphi,\vartheta)$ is straightforward. Specifically, $\Omega(\varphi,\gamma)=1$ iff 
\[
\gamma\in\left\{\pm\gamma_{i,i+1}^{jk} \,\big|\, i=1,\dots,m-1,\, jk=12,23,13\right\}.
\]

\begin{definition}
\label{almost on a line}
Let $\mathcal W(\varphi,\vartheta)$ be a WKB spectral network. We say that zeros of $\varphi(z)$ are \emph{almost on a line} if there exists another polynomial $\widetilde{\varphi}(z)$ obtained by perturbing $\mathrm{Zero}(\varphi)$ such that zeros of $\widetilde{\varphi}(z)$ are on a line in $\mathbb C$, and $\mathcal W(\varphi,\vartheta)$ and $\mathcal W(\widetilde{\varphi},\vartheta_1)$ are in the same chamber, meaning that their BPS counts are equal. 
\end{definition}

We now provide a geometric model for $\mathcal W(\varphi,\vartheta)$ when zeros of $\varphi(z)$ are almost on a line, which can be taken as a geometric definition of WKB spectral networks for this type. The geometric model provides us a better underding of these WKB spectral networks and offers an advantage in obtaining the collection of compatible abelianization trees.

\begin{construction}[Geometric model]
\label{geometric model}
Let $\mathcal W(\varphi,\vartheta)$ be a WKB spectral network where zeros of $\varphi(z)$ are almost on a line. The geometric model of $\mathcal W(\varphi,\vartheta)$ is constructed as follows:
\begin{enumerate}
\item For each $\vartheta\in\mathbb R$, we label the following $2m+6$ directions as marked points on the boundary of $\overline{\mathbb C}$:
\begin{equation}
\left\{\frac{3}{4}\left(\frac{(2n+1)\pi}{2}+\vartheta-\frac{k\pi}{3}-\frac{\mathrm{arg}\,\varphi'(z_{\ast})}{3}\right): \quad n\in\mathbb Z,\, k=3,4,5,\, z_{\ast}\in \mathrm{Zero}(\varphi)\right\}.
\end{equation}
\item For each point $z_{\ast}\in\mathrm{Zero}(\varphi)$, there will be eight smooth curves starting from $z_{\ast}$ and ending at eight marked points corresponding to the following directions:
\begin{equation}
\label{curves from zeros}
\left\{\frac{3}{4}\left(\frac{(2n+1)\pi}{2}+\vartheta-\frac{k\pi}{3}-\frac{\mathrm{arg}\,\varphi'(z_{\ast})}{3}\right):\quad n\in\mathbb Z,\, k=3,4,5\right\}.
\end{equation}
\item These curves are labeled by pairs of distinct sheets of $\Sigma(\varphi)$ in the following way. For each $k\in\{3,4,5\}$, we can uniquely write it as the sum of a pair of distinct integers $i,j\in\{1,2,3\}$, assuming that $i<j$. For odd $n$ in \eqref{curves from zeros}, we assign the label $(i,j)$ to the curve, and for even $n$ we assign the label $(j,i)$. 
\item When two curves with labelings $(i,j)$ and $(j,i)$ intersect, we add a new curve starting from that intersection point and ending at the middle marked point of the two original curves as shown in \Cref{new trajectory}. 
\item We require that these curves are in a minimum position, meaning that if we perturb these curves relative to their endpoints, the number of intersections between these curves can only increase or remain the same.
\item As $\vartheta\in\mathbb R$ varies, these curves will rotate continuously with respect to their ends. One of the curves from $z_i$ and $z_{i+1}$ will collide and form a new finite curve connecting zeros $z_i$ and $z_{i+1}$ only when 
\[
\vartheta\ \mathrm{mod}\ 2\pi\in\{\mathrm{arg}\pm Z_{\gamma_{i,i+1}^{12}},\mathrm{arg}\pm Z_{\gamma_{i,i+1}^{23}},\mathrm{arg}\pm Z_{\gamma_{i,i+1}^{13}}\}.
\]
\end{enumerate}
It is observed that the curves ending at the same marked point have the same labelings. We denote the corresponding marked point by this labeling. There is a pattern of alternating labels for the marked points, which continues in the order $ij,ik,jk,ki,kj,\dots$. Similarly, we can choose $(m+3)$ asymptotic directions as described in \Cref{asymptotices}.
\end{construction}

\begin{remark}
The concept of abelianization trees compatible with the geometric model of WKB spectral networks for this type can be naturally defined, as described in \Cref{compatible trees}. According to \cite{IKS}, the ends and labelings of curves from the geometric construction of $\mathcal W(\varphi,\vartheta)$ coincide with the corresponding WKB $\vartheta$-trajectoires. Since the intersections between any pair of WKB $\vartheta$-trajectories are at most one, we can conclude that the collection of WKB $\vartheta$-trajectories from $\mathcal W(\varphi,\vartheta)$ are also in a minimal position. Therefore, the collection of compatible abelianization trees from the geometric construction of $\mathcal W(\varphi,\vartheta)$ is $\mathcal{T}(\varphi,\vartheta)$. 
\end{remark}

\begin{example}[Gr(3,5)]
\label{ex:trees on Gr(3,5)}
The geometric model of spectral network $\mathcal W(\frac{1}{2}(-z^2+1),\vartheta)$ with fixed asymptotic directions $\{l_i\}$ are shown in Figure \ref{m=2}.
\begin{figure}
\centering
\includegraphics[width=0.3\textwidth]{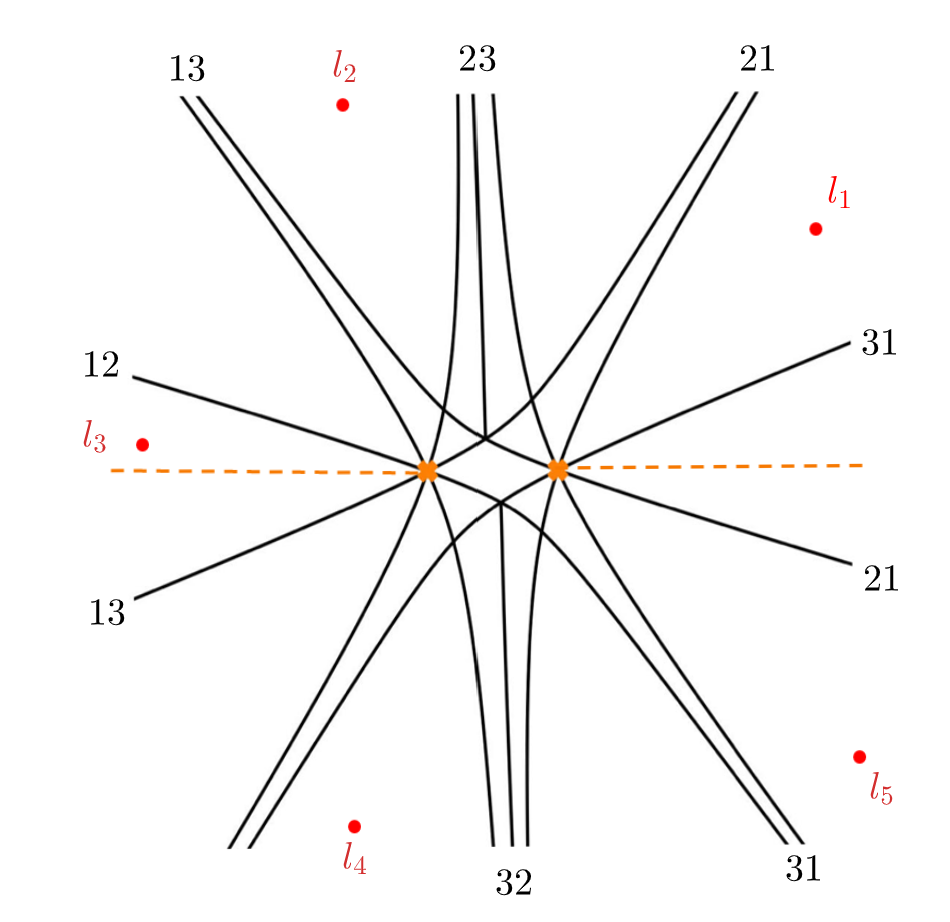}
\caption{The geometric model of the spectral network $\mathcal W(\frac{1}{2}(-z^2+1),\vartheta)$.}
\label{m=2}
\end{figure}
\end{example}

It is worth noting that this geometric model is not applicable in more general cases since the Stokes geometry associated with higher order differential equations is quite different (See \Cref{further example} for an example). We refer to \cite{Ho,GMN1,GMN2} for more discussions.

We consider a WKB spectral network $\mathcal W(\varphi,\vartheta)$ where zeros of $\varphi(z)$ are almost on a line in $\mathbb C$. Assume that the order of zeros is given as $z_1,\dots,z_m$. Let $\varphi_+(z)=(z-z_1)(z-z_2)\cdots(z-z_{m-1})$ and $\varphi_-(z)=(z-z_2)(z-z_3)\cdots(z-z_m)$.

\begin{definition}
\label{degeneration}
The degenerate geometric model of $\mathcal W(\varphi,\vartheta)$ with respect to $\varphi_+$, denoted as $D^+\mathcal W(\varphi,\vartheta)$, is constructed as follows. First, we remove $z_m$ and the curves associated with it from the geometric model of $\mathcal W(\varphi,\vartheta)$. This involves excluding the eight curves starting from $z_m$ and the additional new curves whose starting points lie on these eight curves. Afterward, we modify the behavior of the remaining curves in a way that the resulting degenerate model has the same type of geometric structure as $\mathcal W(\varphi_{+},\vartheta)$, but with different labelings inherited from $\mathcal W(\varphi,\vartheta)$. In this process, we may need to either identify two asymptotic directions or omit one asymptotic direction, depending on the specific situation. Here, we illustrate it in the following case. For instance, if zeros of $\varphi(z)$ are almost on the real line and the labels of the marked points are given as $31,21,23,13,\dots$ in the range of $(0,2\pi)$, with the asymptotic direction $l_1$ being between the first $31$ and $21$, then for even $m$, we need to identify $l_1$ and $l_{m+3}$. However, for odd $m$, there is no $l_{m+3}$ anymore. 

We denote the collection of compatible abelianization trees from $D^{+}\mathcal W(\varphi,\vartheta)$ as $D^{+}\mathcal{T}(\varphi,\vartheta)$. Similarly, we can obtain another degenerate geometric model $D^-\mathcal W(\varphi,\vartheta)$ with the associated collection of compatible abelianization trees $D^{-}\mathcal{T}(\varphi,\vartheta)$.
\end{definition}

\begin{proposition}
\label{compatible trees on finite Gr(3,n)}
Let $\mathcal W(\varphi_m,\vartheta)$ be any BPS-free WKB spectral network where $\varphi_m=(z-z_1)\cdots(z-z_m)$ is a polynomial of degree $m\ge2$ whose zeros are almost on a line in $\mathbb C$. Then there are $3m+1$ distinct compatible abelianization trees in $\mathcal T(\varphi_m,\vartheta)$, which are all in the type of $\mathrm{T}_{p,q,r}$.
\end{proposition}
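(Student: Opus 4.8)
The plan is to establish both assertions—that every compatible abelianization tree has exactly three asymptotic directions, and that $|\mathcal{T}(\varphi_m,\vartheta)|=3m+1$—by induction on $m$, using the degeneration models $D^{\pm}\mathcal{W}(\varphi_m,\vartheta)$ of \Cref{degeneration} as the inductive mechanism.

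I would first dispose of the type statement. Since, by the compatibility rules, every arc terminating at an asymptotic direction $l_r$ carries the fundamental representation $V$, the leaves at infinity are precisely the external legs of the associated $\mathrm{SL}(3,\mathbb C)$ tensor diagram and all of them are colored $V$. A standard feature of such diagrams is that the boundary carries a well-defined $\mathbb{Z}/3$ charge, so the number $L$ of asymptotic directions must satisfy $L\equiv 0\pmod 3$; the trivalent local models of Figure \ref{Junctions} together with the bipartification from \Cref{Bipartification} are exactly what make this charge well defined. It then remains to exclude $L\ge 6$. For this I would use the almost-on-a-line geometric model of \Cref{geometric model} together with the BPS-free hypothesis: the trajectories confine the arcs so rigidly that, after fixing three leaves and routing their arcs to the first junction, any attempt to attach a fourth through sixth leaf forces either a crossing of two arcs or a violation of one of the two label-crossing prohibitions. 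Reducing this to a finite local case-analysis at the junctions yields $L=3$, so each tree is of type $\mathrm{T}_{p,q,r}$.

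For the count I would argue inductively. In the base case $m=2$ I would read off directly from the geometric model of $\mathcal{W}(\tfrac12(-z^2+1),\vartheta)$ in \Cref{ex:trees on Gr(3,5)} that there are exactly $3\cdot2+1=7$ compatible trees, all of type $\mathrm{T}_{p,q,r}$. For the inductive step, passing from $\varphi_{m-1}$ to $\varphi_m$ amounts to adjoining the zero $z_m$; the degeneration $D^{+}\mathcal{W}(\varphi_m,\vartheta)$, obtained by deleting $z_m$ together with its eight emanating curves and the curves they induce, has the same geometric type as $\mathcal{W}(\varphi_{m-1},\vartheta)$, so $D^{+}\mathcal{T}(\varphi_m,\vartheta)$ is identified with $\mathcal{T}(\varphi_{m-1},\vartheta)$ and, by the inductive hypothesis, has $3(m-1)+1=3m-2$ elements. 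The remaining task is to identify the trees of $\mathcal{T}(\varphi_m,\vartheta)$ that are genuinely destroyed by this degeneration and to show there are exactly three of them: two ``mutable'' trees recording the rank increase $2(m-1)-2\to 2m-2$ coming from the new adjacent pair $\{z_{m-1},z_m\}$, together with one new consecutive triple $\mathrm{T}_{p,q,r}$ accounting for the single extra asymptotic direction ($m+2\to m+3$). This gives $|\mathcal{T}(\varphi_m,\vartheta)|=(3m-2)+3=3m+1$.

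The main obstacle is the bookkeeping in the inductive step: one must verify that $D^{+}$ really induces a bijection between $\mathcal{T}(\varphi_m,\vartheta)$ minus the three new trees and $\mathcal{T}(\varphi_{m-1},\vartheta)$, with no trees accidentally merged or created when an asymptotic direction is identified or omitted—this is where the even/odd $m$ dichotomy of \Cref{degeneration} enters. I expect the cleanest safeguard is to run the argument symmetrically with $D^{-}$ (deleting $z_1$) as well, so that the three new trees are pinned down from both ends of the chain of zeros and no miscounting can occur at the boundary. The other, lesser, difficulty is making the exclusion of $L\ge 6$ fully rigorous rather than pictorial, which I expect to reduce to the finite local analysis of the junctions of Figure \ref{Junctions} subject to the two label-crossing prohibitions.
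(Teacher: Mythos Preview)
Your overall strategy---induction on $m$ via the degeneration models $D^{\pm}$---is the same as the paper's, but there is one genuine gap and one structural difference worth noting.

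The gap is in your plan to exclude $L\ge 6$ by ``reducing to a finite local case-analysis at the junctions.'' This cannot work as stated, because the obstruction is not local. Compatible abelianization trees with six asymptotic directions do occur for rank-$3$ WKB networks in other chambers: the paper's \Cref{further example} exhibits $\mathrm{T}_{2,3,4,5,6,1}\in\mathcal{T}(\varphi,\vartheta_0)$ for a degree-$3$ polynomial whose zeros are not almost on a line (see also \Cref{ex:compatible tree m=3}). So the junction rules and label-crossing prohibitions alone do not forbid $L\ge 6$; the almost-on-a-line hypothesis must enter globally. The paper accordingly does not prove the type statement separately---it is folded into the induction, whose hypothesis is that each adjacent pair $\{z_i,z_{i+1}\}$ determines exactly two trees in $\mathcal{T}^{\circ}$, all of the form $\mathrm{T}_{p,q,r}$.

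The structural difference is in the inductive step. You use $D^{+}$ alone and propose a bijection $\mathcal{T}(\varphi_m,\vartheta)\setminus\{\text{three new trees}\}\cong\mathcal{T}(\varphi_{m-1},\vartheta)$, with $D^{-}$ as a safeguard. The paper instead uses both degenerations from the outset, via inclusion--exclusion on $\mathcal{T}^{\circ}$: one has $|D^{+}\mathcal{T}^{\circ}|=|D^{-}\mathcal{T}^{\circ}|=2n-4$ and $|D^{+}\mathcal{T}^{\circ}\cap D^{-}\mathcal{T}^{\circ}|=2n-6$, hence $|D^{+}\mathcal{T}^{\circ}\cup D^{-}\mathcal{T}^{\circ}|=2n-2$; a separate argument then shows that any hypothetical tree in $\mathcal{T}^{\circ}\setminus(D^{+}\mathcal{T}^{\circ}\cup D^{-}\mathcal{T}^{\circ})$ would be forced to hit a specific set of asymptotic directions and can be degenerated to a contradiction. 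This two-sided mechanism supplies exactly the surjectivity step that your single-degeneration bijection leaves as ``the main obstacle.''
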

\begin{proof}
We start with the case where zeros of $\varphi_m(z)$ are on a line in $\mathbb C$. Without loss of generality, assume that the zeros are real numbers $z_1<z_2<\cdots<z_m$. There is a compatible abelianization tree $\mathrm{T}_{p,p+1,p+2}$ as shown in Figure \ref{T(p,p+1,p+2)} for each of the six neighboring asymptotic directions labeled by $ij,kj,ki,ji,jk,ik$. As a result, we obtain $m+3$ abelianization trees $\mathrm{T}_{1,2,3},\mathrm{T}_{2,3,4},\cdots,\mathrm{T}_{m+2,m+3,1},\mathrm{T}_{m+3,1,2}$.
\begin{figure}
\centering
\includegraphics[width=0.3\textwidth]{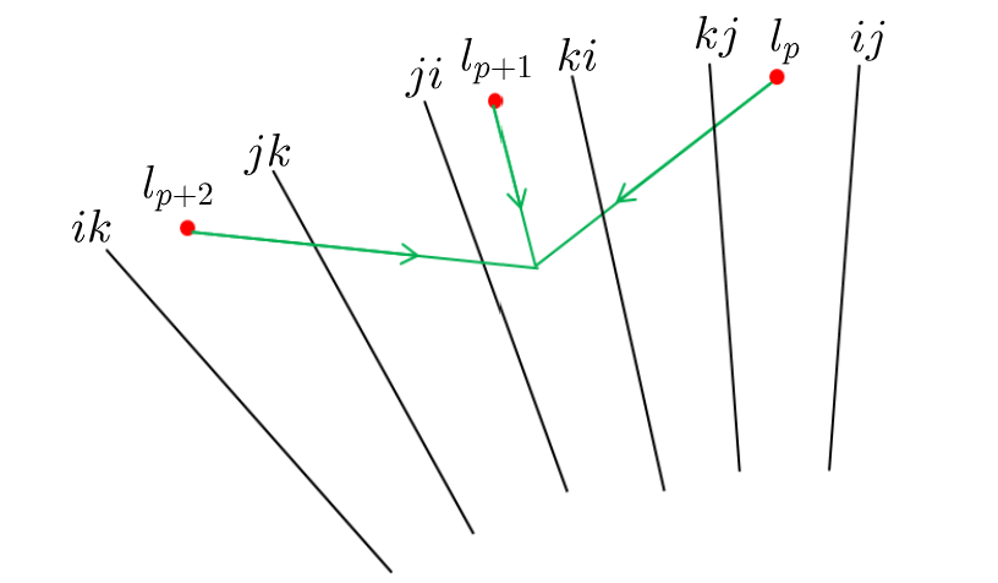}
\caption{The compatible abelianization tree $\mathrm{T}_{p,p+1,p+2}.$}
\label{T(p,p+1,p+2)}
\end{figure}
We only need to show that the set
\[
\mathcal{T}^{\circ}(\varphi_m,\vartheta):=\mathcal{T}(\varphi_m,\vartheta)\backslash\{\mathrm{T}_{1,2,3},\mathrm{T}_{2,3,4}\dots,\mathrm{T}_{m+2,m+3,1},\mathrm{T}_{m+3,1,2}\}
\]
is made up of $2m-2$ compatible abelianization trees in $\mathcal{T}(\varphi,\vartheta)$. Assume that the labels of marked points are given as $31,21,23,13, \dots$ in the range of $(0,2\pi)$, with the asymptotic direction $l_1$ being between the first $31$ and $21$. The cases for the other possible labelings of the marked points can be shown in a similar way.

Under this assumption, the geometric models of $\mathcal W(\varphi_2,\vartheta)$ and $\mathcal W(\varphi_3,\vartheta)$ are as shown in Figures \ref{ex:compatible tree of Gr(3,5)} and \ref{ex:compatible tree of Gr(3,6)}. In the case of $m=2$, we have
\[
\mathcal{T}^{\circ}(\varphi_2,\vartheta)=\{\mathrm{T}_{2,4,5},\mathrm{T}_{1,2,4}\}.
\]
For $m=3$, we have
\[
\mathcal{T}^{\circ}(\varphi_3,\vartheta)=\{\mathrm{T}_{1,2,4},\mathrm{T}_{2,4,5},\mathrm{T}_{1,2,5},\mathrm{T}_{1,4,5}\}.
\]
Note that for ${\mathrm{T}_{2,4,5},\mathrm{T}_{1,2,4}}$ in $\mathcal{T}^{\circ}(\varphi_3,\vartheta)$, one of their edges passes between ${z_1,z_2}$. In this case $\{\mathrm{T}_{2,4,5},\mathrm{T}_{1,2,4}\}$ in $\mathcal{T}^{\circ}(\varphi_3,\vartheta)$ are determined by $\{z_1,z_2\}$.
\begin{figure}
\centering
\includegraphics[width=0.3\textwidth]{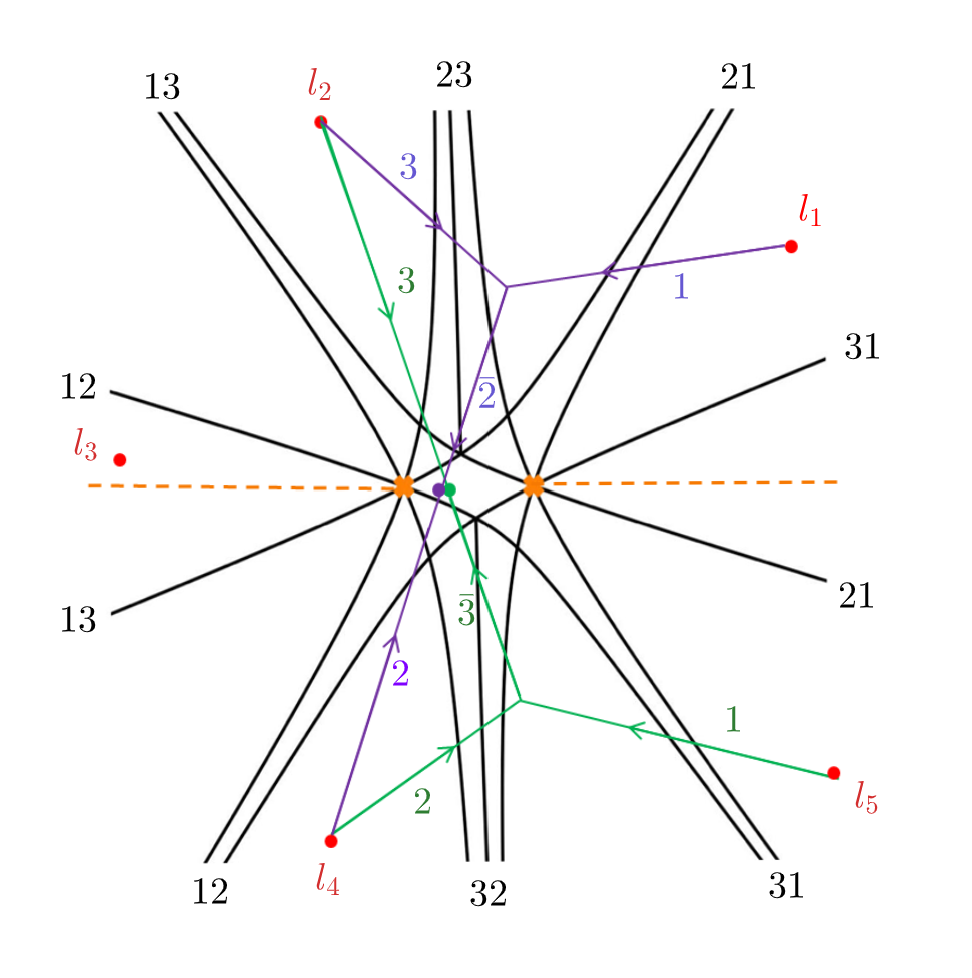}
\caption{The compatible abelianization trees in $\mathcal{T}^{\circ}(\varphi_2,\vartheta).$}
\label{ex:compatible tree of Gr(3,5)}
\end{figure}
\begin{figure}
\centering
\includegraphics[width=0.3\textwidth]{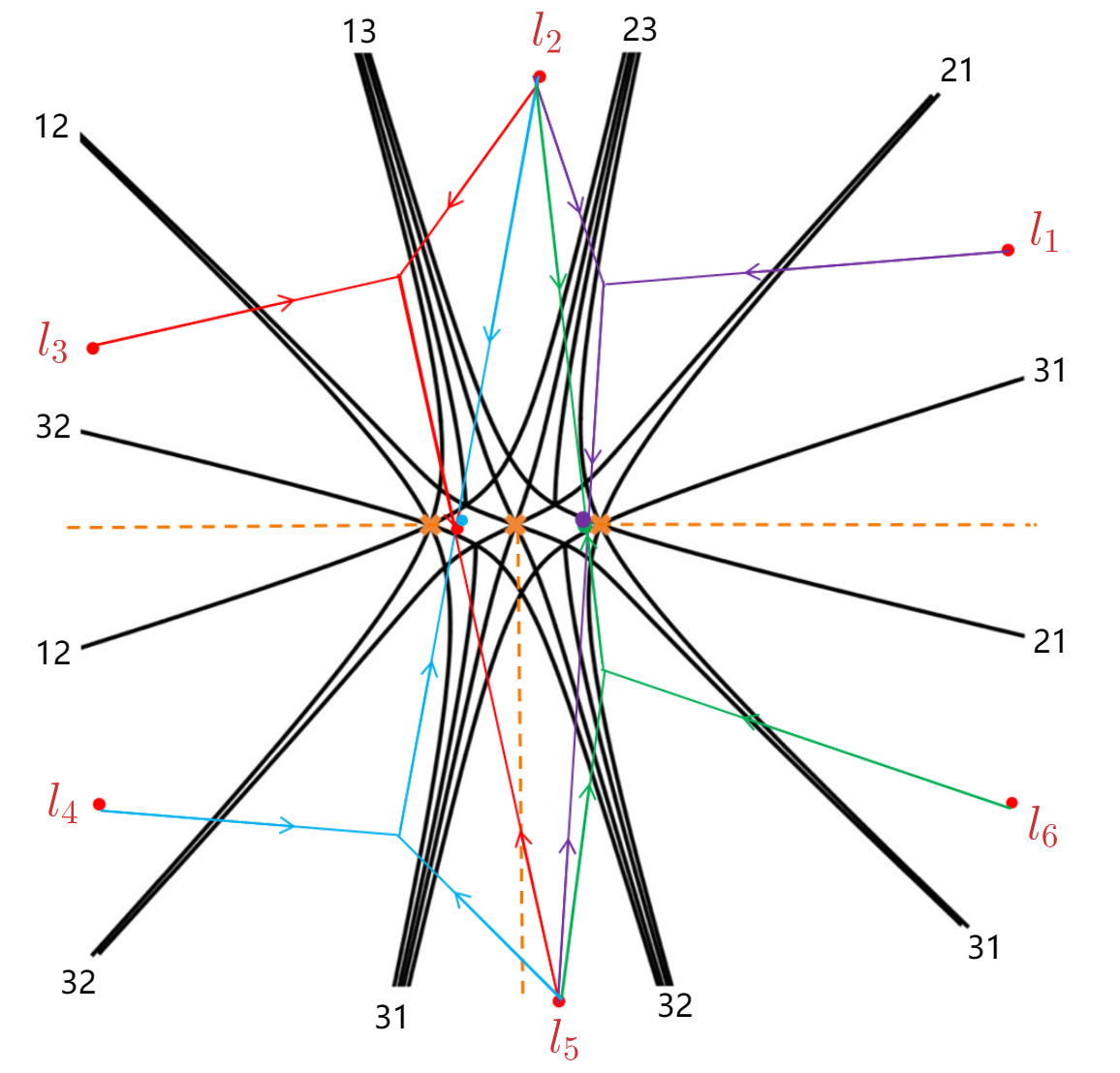}
\caption{The compatible abelianization trees in $\mathcal{T}^{\circ}(\varphi_3,\vartheta_1)$.}
\label{ex:compatible tree of Gr(3,6)}
\end{figure}
In general, we claim that pairs of zeros $\{z_i,z_{i+1}\}$ determine uniquely pairs of compatible abelianization trees in the type of $\mathrm{T}_{p,q,r}$, which form all the compatible abelianization trees in $\mathcal T^{\circ}(\varphi_m,\vartheta)$. We assume that the claim holds for $2\le m<n$. Denote by $\mathcal D^{\pm}\mathcal{T}^{\circ}(\varphi_m,\vartheta)$ the subset of $\mathcal D^{\pm}\mathcal{T}^{\circ}(\varphi_m,\vartheta)$ that excludes the specific type of compatible abelianization trees described in Figure \ref{T(p,p+1,p+2)}. Since the degenerate geometric models $D^{\pm}\mathcal W(\varphi_n,\vartheta)$ have the same type of geometric models as $\mathcal W(\varphi_{\pm},\vartheta)$, where $\varphi_+(z)=(z-z_1)\cdots(z-z_{n-1}),\varphi_-(z)=(z-z_2)\cdots(z-z_n)$, the claim holds for $D^{\pm}\mathcal{T}^{\circ}(\varphi_n,\vartheta)$. Therefore, we obtain
\[
|D^{-}\mathcal T^{\circ}(\varphi_n,\vartheta)|=|D^{+}\mathcal T^{\circ}(\varphi_n,\vartheta)|=2n-4.
\]
By its construction, we have 
\[
D^{-}\mathcal T^{\circ}(\varphi_n,\vartheta)\cup D^+\mathcal T^{\circ}(\varphi_n,\vartheta)\subset\mathcal T^{\circ}(\varphi_n,\vartheta),\quad |D^{-}\mathcal T^{\circ}(\varphi_n,\vartheta)\cap D^+\mathcal T^{\circ}(\varphi_n,\vartheta)|=2n-6.
\]
Thus,
\[
|\mathcal T^{\circ}(\varphi_n,\vartheta)|\ge|D^{-}\mathcal T^{\circ}(\varphi_n,\vartheta)\cup D^+\mathcal T^{\circ}(\varphi_n,\vartheta)|=2n-2.
\]
We now need to show that there are no other compatible abelianization trees in $\mathcal{T}^{\circ}(\varphi_n,\vartheta)$. Assume $\mathrm{T}\in\mathcal{T}^{\circ}(\varphi_n,\vartheta)\backslash (D^{-}\mathcal T^{\circ}(\varphi_n,\vartheta)\cup D^+\mathcal T^{\circ}(\varphi_n,\vartheta))$. In the case of even $n$, $\mathrm{T}$ must include the ends $l_1, l_{n/2+2}, l_{n+3}$. If $\mathrm{T}$ does not contain any of these ends, it can be naturally degenerated into $D^+\mathcal T^{\circ}(\varphi_n,\vartheta)$ or $D^+\mathcal T^{\circ}(\varphi_n,\vartheta)$, which contradicts our assumption. We claim that there are two arcs, denoted as $a_1,a_2$, in $\mathrm T$, where one end of each arc coincides at a junction $p$, and the other ends are $l_1$ and $l_{n+3}$ respectively. If this is not the case, $\mathrm{T}$ can also be degenerated into another compatible abelianization tree $\mathrm T'$ in $D^+\mathcal T^{\circ}(\varphi_n,\vartheta)\}$, which is a contradiction. Let $a_3$ be the other arc ending at $p$. Then $\mathrm{T}$ can be degenerated into another compatible abelianization tree $\mathrm{T}'$ in $D^+\mathcal T^{\circ}(\varphi_n,\vartheta)\}$ by replacing $a_1$ with an arc $a_1'$ with ends $p$ and $l_{2}$, or replacing $a_2$ with an arc $a_2'$ with ends $p$ and $l_{n+2}$. In this case, the ends of $\mathrm{T}'$ contain $l_{n/2+2}$ and $l_{1}$ $(l_{n+3})$, which is also a contradiction. For odd $m$, the ends of $\mathrm{T}$ must contain $l_{1}$, $l_{(n+3)/2}$, $l_{(n+5)/2}$, and $l_{n+3}$ for the same reason as in the even case. Similarly, $\mathrm{T}'$ can also be degenerated into another compatible abelianization tree in $D^{-}\mathcal{T}^\circ(\varphi_n,\vartheta)$ or $D^{+}\mathcal{T}^\circ(\varphi_n,\vartheta)$, which contradicts our assumption.

In fact, the proof procedure remains valid when zeros of $\varphi_m$ are almost on a line, i.e., the claim, that pairs of zeros ${z_i, z_{i+1}}$ uniquely determine pairs of compatible abelianization trees in the form of $\mathrm{T}_{p, q, r}$ which form all the compatible abelianization trees in $\mathcal{T}^{\circ}(\varphi_m, \vartheta)$, can still be demonstrated in a similar manner.

In conclusion, we obtain that $\mathcal{T}(\varphi_m,\vartheta)$ consists of $3m+1$ compatible abelianization trees when zeros of $\varphi_m$ are almost on a line in $\mathbb C$.
\end{proof}
\begin{example}
We have described the collections of compatible abelianization trees $\mathcal T^{\circ}(\varphi_3=(z-1)(z-2)(z-3),\vartheta_1),\mathcal T^{\circ}(\varphi_4=(z-1)(z-2)(z-3)(z-4),\vartheta_2)$ for the WKB spectral networks $\mathcal W(\varphi_3,\vartheta_1)$ and $\mathcal W(\varphi_4,\vartheta_2)$, as shown in Figures \ref{ex:compatible tree of Gr(3,6)}, \ref{ex:compatible tree of Gr(3,7)}.
\end{example}

\begin{figure}
\centering
\includegraphics[width=0.4\textwidth]{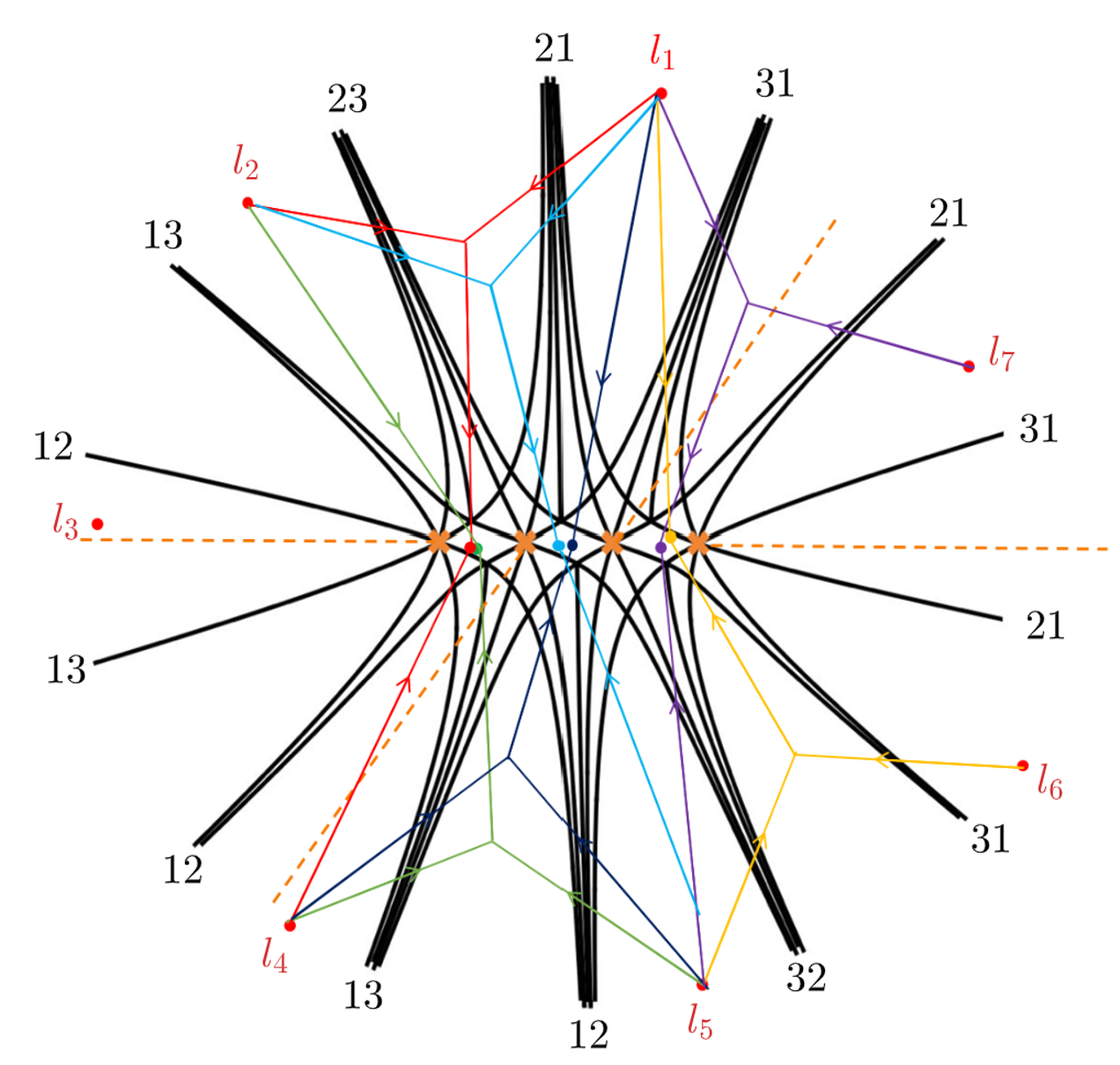}
\caption{The compatible abelianization trees in $\mathrm{T}^{\circ}(\varphi_4,\vartheta_2)$.}
\label{ex:compatible tree of Gr(3,7)}
\end{figure}

\section{Cluster structures and WKB spectral networks}
\label{sec3cluster}
In this section, we will go over some definitions of cluster theory \cite{FP, FST, FZ1, FZ2,S}. Additionally, we will introduce a technique to extract cluster structures from the WKB spectral networks when zeros are almost on a line in $\mathbb C$, and define spectral coordinates from these WKB spectral networks. 
\subsection{Cluster algebras}
\label{clusters}
Commutative rings that have a specific type of combinatorial structure are referred to as cluster algebras. We only require the notion of a cluster algebra of \emph{geometric type}, with the scalar field $\mathbb C$ and \emph{skew-symmetric exchange matrices}. To keep things simple, we refer to these gadgets as cluster algebras. A \emph{quiver} $Q$, a finite oriented loopless graph with no oriented $2$-cycles, determines a cluster algebra. Some of $Q$'s vertices are said to as \emph{mutable}, while the others are referred to as \emph{frozen}.
\begin{definition}
\label{mutation}
Let $z$ represent a mutable vertex in a quiver $Q$. The \emph{quiver mutation} $\mu_z$ transforms $Q$ into the new quiver $Q'=\mu_z(Q)$ through a sequence of three steps. At the beginning of the process, for each pair of directed edges $x\to z\to y$ traveling through $z$, introduce a new edge $x\to y$ (unless $x$ and $y$ are both frozen, in which case do nothing). Reverse the direction of all edges incident to $z$ in the second step. Remove oriented $2$-cycles till you can no longer at the third stage. 
\end{definition}

\begin{definition}
Assume that $\mathcal F$ is a field containing $\mathbb C$. A \emph{seed} in $\mathcal F$ is a pair $(Q,\mathbf z)$ consisting of a quiver $Q$ as described above and a collection $\mathbf z$, called an \emph{extended cluster}, comprising algebraically independent (over $\mathbb C$) elements of $\mathcal F$, one for each vertex of $Q$. The elements of $\mathbf z$ associated with the mutable vertices are referred to as \emph{cluster variables}, which form a \emph{cluster}. The elements associated with the frozen vertices are called \emph{frozen variables}, or \emph{coefficient variables}.

The seed $(Q,\mathbf z)$ is transformed into the new seed $(Q',\mathbf z')=\mu_z(Q,\mathbf z)$ by a \emph{seed mutation} at a mutable vertex linked to a cluster variable $z$ defined as follows. $Q'=\mu_z(Q)$ is the new quiver. The new extended cluster is defined as $\mathbf z'=\mathbf z\cup\{z'\}\setminus\{z\}$, where $z$ is replaced by the new cluster variable $z'$ based on the \emph{exchange relation}.
\begin{equation}
\label{ex1}
zz'=\prod_{z\leftarrow y}y+\prod_{z\to y}y
\end{equation}
\end{definition}

It should be noted that the original seed $(Q,\mathbf z)$ and the mutated seed $(Q',\mathbf z')$ both contain the identical coefficient varibles. By performing a seed mutation at $z'$.
It is simple to verify that $(Q,\mathbf z)$ can be recovered from $(Q',\mathbf z')$.

\begin{definition}[Cluster algebra]
\emph{Seed mutation equivalent} refers to two seeds that can be produced from one another by a sequence of seed mutations. The subring of $\mathcal F$ generated by all elements of all extended clusters of the seed mutation equivalent to $(Q,\mathbf z)$ is defined as the \emph{cluster algebra} $\mathcal A(Q,\mathbf z)$.
\end{definition}
The number of cluster variables in each of a cluster algebra's seeds, or alternatively the number of mutable vertices in each of its quivers, determines the algebra's \emph{rank}.

The \emph{rank} of a cluster algebra $\mathcal A(Q,\mathbf z)$ is the number of cluster variables in each of its seeds, or equivalently the number of mutable vertices in each of its quivers. We consider the cluster algebras with rank $n\ge2$.

When a quiver $Q$ has one of its vertices mutated, the induced subquiver on the set of mutable vertices also does so. The mutable components of the quivers at various seeds of a particular cluster algebra $\mathcal A$ are therefore all seed mutation equivalent. The (cluster) \emph{type} of $\mathcal A$ is determined by this vertex mutation equivalence class.

If a cluster algebra contains a finite number of different seeds, it is said to be of \emph{finite type}. The \emph{finite type classification} is one of the key structural findings in cluster theory:
\begin{theorem}\cite{FZ2}
A cluster algebra is of finite type if and only if it has a quiver whose mutable part is an orientation of disjoint union of Dynkin diagrams. 
\end{theorem}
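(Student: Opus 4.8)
The plan is to follow the strategy of Fomin--Zelevinsky, proving the two implications by rather different means. Throughout I work in the skew-symmetric setting relevant here, so that ``orientation of a Dynkin diagram'' refers to the simply-laced types $A_n, D_n, E_6, E_7, E_8$, and a quiver with no multiple arrows corresponds to a rank-$2$ exchange submatrix $\left(\begin{smallmatrix} 0 & b \\ -c & 0\end{smallmatrix}\right)$ with $bc=b^2\le 1$. The natural place to start is the rank-$2$ base case, which controls everything else. For the quiver with such an exchange matrix, $b,c>0$, the exchange relation \eqref{ex1} produces a sequence of cluster variables satisfying a fixed three-term recurrence; a direct computation shows this sequence is periodic, so $\mathcal A$ is of finite type, exactly when $bc\le 3$, giving the four rank-$2$ finite types $A_1\times A_1, A_2, B_2, G_2$, whereas for $bc\ge 4$ the denominators grow without bound and there are infinitely many cluster variables. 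In the skew-symmetric case $bc$ is a perfect square, so finiteness forces $bc\in\{0,1\}$, i.e. at most a single arrow between the two vertices.

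For the ``only if'' direction I would reduce to this base case. If $\mathcal A(Q)$ has finitely many seeds, then for any seed $(Q',\mathbf z')$ mutation equivalent to $(Q,\mathbf z)$ and any two mutable vertices $i,j$, the rank-$2$ structure carried by the submatrix $B'_{\{i,j\}}$ must itself be of finite type: otherwise, the exchange relations for the variables at $i$ and $j$ (whose exponents depend only on $b_{ij}, b_{ji}$) already generate infinitely many distinct cluster variables under iterated mutation in those two directions. Hence $Q$ is \emph{$2$-finite}, meaning that in every mutation-equivalent seed each principal rank-$2$ exchange submatrix satisfies $bc\le 3$. The core combinatorial step is then the classification of $2$-finite diagrams: one shows that a diagram is $2$-finite if and only if it is mutation equivalent to an orientation of a disjoint union of Dynkin diagrams, by analyzing which weighted cycles and branchings can survive all mutations and ruling out everything outside the Cartan--Killing list.

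For the ``if'' direction I would use the root-theoretic model. Assuming $Q$ is an orientation of a Dynkin diagram of type $X$ with root system $\Phi$ and simple roots $\Pi$, the Laurent phenomenon assigns to each non-initial cluster variable a well-defined denominator vector $\mathbf d\in\mathbb Z^n$. Arguing by induction on the number of mutations, one proves these denominator vectors are exactly the positive roots of $\Phi$, producing a bijection between cluster variables and the almost positive roots $\Phi_{\ge -1}=\Phi_{>0}\cup(-\Pi)$; in particular there are only $n+|\Phi_{>0}|$ cluster variables. Equipping $\Phi_{\ge -1}$ with the compatibility degree, clusters then correspond precisely to the maximal mutually compatible subsets, of which there are finitely many (the vertices of the generalized associahedron), so $\mathcal A(Q)$ is of finite type.

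I expect the main obstacle to be the classification of $2$-finite diagrams in the ``only if'' direction: the base case and the reduction to $2$-finiteness are comparatively routine, but determining exactly which diagrams remain $2$-finite under \emph{all} mutations requires the delicate diagrammatic case analysis that forms the technical heart of the Fomin--Zelevinsky argument. The secondary difficulty is showing, in the ``if'' direction, that the denominator vectors realize the positive roots, since this rests on the positivity and recursive structure of the cluster variables.
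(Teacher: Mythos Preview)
The paper does not prove this theorem; it is stated with the citation \cite{FZ2} and used as a known background result, so there is no proof in the paper to compare against. Your outline is a faithful sketch of the Fomin--Zelevinsky argument from \cite{FZ2}: the reduction to $2$-finiteness for the ``only if'' direction and the almost-positive-roots/denominator-vector model for the ``if'' direction are exactly the two pillars of their proof, and you have correctly identified the classification of $2$-finite diagrams as the technical heart.
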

Remember that the cluster type of a cluster algebra is the only factor that influences the finite type property. $\mathcal A$ is referred to as being of type ``$X_n$" if a cluster algebra of rank $n$ has a quiver whose mutable component is an orientation of a Dynkin diagram of type $X_n$.

The Grassmannian algebra $\mathbb C[{Gr_{3,n}}]$ is the homogeneous coordinate ring of $Gr(3,n)$, a projective subvariety of $\mathbb P^n$ and can also be equivalently defined by 
$\mathbb C[Gr(3,n)]:=\mathbb C[V^n]^{\mathrm{SL}(V)}$, where $V=\mathbb C^3$ as a three dimentional complex vector space. A well-known theorem relating to cluster algebras of finite type and Grassmann algebras is as follows:
\begin{theorem}\cite[Theorem 5]{S}
$Gr(3,6),Gr(3,7)$ and $Gr(3,8)$ are the only Grassmannians $Gr(k,n)$ within the range $2<k\le\frac{n}{2}$, whose homogenous coordinate rings are cluster algebras of finite type.
\end{theorem}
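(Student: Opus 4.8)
The plan is to combine the finite type classification of Fomin--Zelevinsky (quoted just above) with the explicit initial quiver $Q_{k,n}$ underlying the cluster structure on $\mathbb{C}[Gr(k,n)]$, and to argue monotonically: realize the three exceptional Grassmannians as finite type by direct mutation, isolate two minimal infinite type cases, and propagate infinitude upward. First I would record two structural reductions. By the canonical isomorphism $Gr(k,n)\cong Gr(n-k,n)$, which identifies the two cluster algebras, it suffices to treat $2\le k\le n/2$, so the range in the statement is exactly the relevant one. The rank of $\mathbb{C}[Gr(k,n)]$ equals $(k-1)(n-k-1)$, and its standard seed is supported on a $(k-1)\times(n-k-1)$ grid of Pl\"ucker coordinates carrying a fixed orientation $Q_{k,n}$; freezing the Pl\"ucker variables along one boundary edge of the grid turns $Q_{k,n}$ into $Q_{k,n-1}$, while freezing along a transverse edge turns it into $Q_{k-1,n-1}$. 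This yields the key \emph{restriction principle}: if the mutable part of $Q_{k',n'}$ occurs as a full subquiver of (some quiver mutation-equivalent to) $Q_{k,n}$ with the remaining vertices frozen, then finite type of $\mathbb{C}[Gr(k,n)]$ forces finite type of $\mathbb{C}[Gr(k',n')]$, because restricting mutations to the subquiver realizes the clusters of the smaller algebra inside those of the larger one.

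Next I would dispose of the finite cases. For $k=2$ the algebra $\mathbb{C}[Gr(2,n)]$ is the classical type $A_{n-3}$ cluster algebra coming from triangulations of an $n$-gon, hence always of finite type, but it lies outside the range $k>2$. For the three exceptional Grassmannians I would verify finiteness by explicit computation: starting from $Q_{3,6},Q_{3,7},Q_{3,8}$ (the $2\times2$, $2\times3$, $2\times4$ grids), I would produce short sequences of quiver mutations carrying each to an orientation of $D_4$, $E_6$, and $E_8$ respectively, after which the Fomin--Zelevinsky criterion gives finite type. These are precisely the three finite exceptions the theorem claims, so everything remaining must be shown to be of infinite type.

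The heart of the argument is two base infinite type computations, which I expect to be the main obstacle. I would show that $\mathbb{C}[Gr(3,9)]$ and $\mathbb{C}[Gr(4,8)]$ are of infinite type by producing, inside (a mutation of) their initial quivers, a full subquiver that is an orientation of an affine Dynkin diagram---concretely $\widetilde{E}_8$ for the $2\times5$ grid $Q_{3,9}$ and $\widetilde{E}_7$ for the $3\times3$ grid $Q_{4,8}$. Since an orientation of an affine Dynkin diagram is not mutation-equivalent to any finite Dynkin orientation, it is of infinite type, and by the restriction principle this forces $\mathbb{C}[Gr(3,9)]$ and $\mathbb{C}[Gr(4,8)]$ to be of infinite type. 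Locating these affine subquivers explicitly, and checking that they are genuinely induced, is the delicate and computational step, since one may first need to mutate $Q_{3,9}$ or $Q_{4,8}$ before the affine diagram becomes visible; note also that both base cases are genuinely required, as the freezing reductions from $Gr(4,8)$ only reach $Gr(3,7)$ and $Gr(2,8)$, and those from $Gr(3,9)$ only reach $Gr(3,8)$ and $Gr(2,8)$, all finite.

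Finally I would propagate. Using the two freezing reductions $(k,n)\mapsto(k,n-1)$ and $(k,n)\mapsto(k-1,n-1)$ together with the restriction principle, infinite type spreads upward: from $Gr(3,9)$ every $Gr(3,n)$ with $n\ge9$ is of infinite type, and from $Gr(4,8)$ every $Gr(k,n)$ with $4\le k\le n/2$ is of infinite type, since any such $(k,n)$ can be brought to $(4,8)$ by applying $(k,n)\mapsto(k-1,n-1)$ exactly $k-4$ times---landing at $(4,n-k+4)$ with $n-k+4\ge 8$ because $n\ge 2k$---and then applying $(k,n)\mapsto(k,n-1)$ until the second index reaches $8$, all the while staying in the admissible range. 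Combining these with the finite cases, the only Grassmannians $Gr(k,n)$ in the range $2<k\le n/2$ whose coordinate rings are cluster algebras of finite type are $Gr(3,6)$, $Gr(3,7)$ and $Gr(3,8)$, as claimed.
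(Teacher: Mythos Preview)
The paper does not prove this theorem: it is quoted verbatim as \cite[Theorem 5]{S} and used as background, with no argument given. So there is no ``paper's own proof'' to compare against; your proposal is an attempt to reconstruct Scott's original argument rather than anything appearing in the present paper.

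That said, your outline is essentially the correct strategy and matches Scott's approach: verify the three finite cases by mutating the grid quivers to $D_4$, $E_6$, $E_8$; establish two minimal infinite base cases $Gr(3,9)$ and $Gr(4,8)$; and propagate via the freezing reductions $(k,n)\mapsto(k,n-1)$ and $(k,n)\mapsto(k-1,n-1)$. One point to sharpen: in Scott's paper the infinite type of the base cases is witnessed not by ordinary affine diagrams $\widetilde E_7,\widetilde E_8$ but by the extended affine (elliptic) types $E_7^{(1,1)}$ and $E_8^{(1,1)}$, which arise naturally from the grid orientation. Either witness suffices in principle, but if you try to locate a literal $\widetilde E_8$ inside a mutation of $Q_{3,9}$ you may not find one without more work, whereas the elliptic diagrams appear directly. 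Your restriction principle also needs a slightly more careful statement: finite type is inherited by full subquivers on mutable vertices (this is the content of the finite type classification being mutation-invariant together with the fact that mutating only at vertices of a full subquiver keeps it full), and you should make explicit that the grid embeddings $Q_{k-1,n-1},Q_{k,n-1}\hookrightarrow Q_{k,n}$ are indeed full on mutable vertices.
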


\subsection{Cluster structures from WKB spectral networks}
We first recall the results in \cite{FP}. Let $P_n$ be a convex $n$-gon with counterclockwise-labled vertices $1,2,\cdots,n$. The edge in $P_n$ connecting vertices $i$ and $j$ is denoted by $e_{i,j}$ and the triangle with counterclockwise-labeled vertices $i,j,k$ is denoted by $t_{i,j,k}$. Consider a triangulation $T_n$ on $P_n$ consisting of a set of interior edges $E(T_n)=\{e_{i,j}\}$ and all boundary edges $\{e_{1,2},\dots,e_{n,1}\}$. The concept of \emph{special invariants} $J^{?}_{?}$ on disks has been introduced in \cite[Definition 6.1]{FP}. In this section, the special invariants refer to elements in $\mathbb C[Gr_{3,n}]$ that are determined by the corresponding graphs on disks.

The collection of special invariants associated with $T_n$, denoted by $K(T_n)$, is constructed as follows \cite[Definition 7.1]{FP}:
\begin{enumerate}
\item For each edge $e_{i, j}\in E(T_n)$, include $J_{i,i+1,j}$ and $J_{j,j+1,i}$;
\item For each triangle $t_{i,j,k}$ on $T_n$, include $J_{i,j,k}$. 
\end{enumerate}
The extended cluster $\mathbf z(T_n)$ consists of all special invariants that appear in nonzero elements of $K(T_n)$. The cluster $\mathbf{x}(T_n)=\mathbf z(T_n)\backslash\{J_{i,i+1,i+2}\}_{i=1}^{m+2}$ consists of all non-coefficient invariants in $\mathbf z(T_n)$. 
\begin{definition}
\label{ex3}
The exchange relations for $\mathbf z(T_n)$ associated with $T_n$ include the following identities:
\begin{enumerate}
\item For each triangle $t_{i,j,k}$ of $T_n$, write
\[
J_{i,j,k}J^{i,j,k}=J_{i,i+1,k}J_{k,k+1,j}J_{j,j+1,i}+J_{j,j+1,k}J_{k,k+1,i}J_{i,i+1,j};
\]
\item For each edge $e_{i,j}$ in $T_n$ separating triangles $t_{i,j,k}$ and $t_{i,h,j}$, write
\[
J_{j,j+1,i}J_{k,j,h}=J_{j,j+1,k}J_{i,j,h}+J_{j,j+1,h}J_{i,k,j};
\]
Furthermore, if there is one side of $t_{i,j,k}$ that is exposed, write
\[
J_{j,j+1,i}J^{i,k}_{j,h}=J_{k,k+1,i}J_{i,i+1,j}J_{j,j+1,h}+J_{i,j,h}J^{i,k,j};
\]
\[
J_{i,i+1,j}J^{k,j}_{h,i}=J_{k,k+1,j}J_{i,i+1,h}J_{j,j+1,i}+J_{i,j,h}J^{i,k,j};
\]
\end{enumerate}
\end{definition}
The quiver $Q(T_n)$ associated with a triangulation $T_n$ is constructed so that the exchange relations \eqref{ex1} match the 3-term exchange relations associated with $T_n$. This requirement uniquely determines $Q(T_n)$ up to the simultaneous reversal of the directions of all edges incident to any subset of connected components of the mutable part of the quiver. 

\begin{theorem}[{\cite[Theorem 8.1]{FP}}]
$\mathcal A(Q(T_n),\mathbf z(T_n))\cong\mathbb C[Gr(3,n)]$.
\end{theorem}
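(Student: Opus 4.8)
The plan is to identify $\mathbb C[Gr(3,n)]=\mathbb C[V^n]^{\mathrm{SL}(V)}$ with the cluster algebra by squeezing it between the cluster algebra and the upper cluster algebra of the seed $(Q(T_n),\mathbf z(T_n))$, i.e. to prove $\mathcal A(Q(T_n),\mathbf z(T_n))\subseteq\mathbb C[Gr(3,n)]\subseteq\mathcal U(Q(T_n),\mathbf z(T_n))$ and then use $\mathcal A=\mathcal U$ for these seeds. First I would check the seed is well posed inside the ring: each special invariant in $\mathbf z(T_n)$ is an $\mathrm{SL}(V)$-invariant contraction of the attached vectors, hence a regular function on the affine cone $\widehat{Gr}(3,n)$, and in particular $J_{i,j,k}$ is the Plücker coordinate $p_{ijk}$. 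I would then verify that $\mathbf z(T_n)$ is algebraically independent by counting ($n$ coefficient variables $J_{i,i+1,i+2}$ together with $2(n-4)$ cluster variables against $\dim\widehat{Gr}(3,n)=3n-8$) and by exhibiting a degeneration at which the relevant Jacobian is nonsingular. This makes $\mathbf z(T_n)$ a transcendence basis, so that $\mathcal A$ and $\mathcal U$ both live in $\mathrm{Frac}(\mathbb C[Gr(3,n)])$.

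For the inclusion $\mathcal A\subseteq\mathbb C[Gr(3,n)]$ I would show that mutation never leaves the ring of regular functions. The essential point is that the exchange relations of \Cref{ex3} hold as genuine polynomial identities among $\mathrm{SL}(V)$-invariants: they are precisely the three-term skein (Plücker-type) syzygies for the relevant tensor diagrams, and $Q(T_n)$ was built so that the abstract exchange relation \eqref{ex1} reproduces them. Hence each one-step mutation of $\mathbf z(T_n)$ is again a special invariant, and by induction on the mutation distance every cluster variable is a regular function, giving $\mathcal A\subseteq\mathbb C[Gr(3,n)]$.

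For the inclusion $\mathbb C[Gr(3,n)]\subseteq\mathcal U$ I would invoke normality. The ring $\mathbb C[Gr(3,n)]$ is a finitely generated normal domain and in fact a unique factorization domain (its divisor class group is trivial), the cluster variables of $\mathbf z(T_n)$ are pairwise coprime irreducibles, and by the previous paragraph their one-step mutations again lie in the ring and are coprime to the variables they replace. The Starfish Lemma then yields $\mathbb C[Gr(3,n)]\subseteq\mathcal U(Q(T_n),\mathbf z(T_n))$. Since these seeds admit an acyclic representative in their mutation class (and more generally the Grassmannian seeds are known to satisfy $\mathcal A=\mathcal U$), one has $\mathcal A=\mathcal U$. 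Combining the three facts $\mathcal A\subseteq\mathbb C[Gr(3,n)]\subseteq\mathcal U=\mathcal A$ forces all three rings to coincide, which is the desired isomorphism.

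The main obstacle is the invariant-theoretic content hidden in the second and third steps. Verifying that the formal exchange relations of \Cref{ex3} are exactly the skein relations satisfied by the special invariants — so that mutation genuinely stays inside $\mathbb C[Gr(3,n)]$ — requires the full tensor-diagram calculus developed in the preceding sections, and establishing the irreducibility and pairwise coprimality of the special invariants needed for the Starfish step (together with $\mathcal A=\mathcal U$) is the delicate technical input that upgrades the two inclusions to an equality.
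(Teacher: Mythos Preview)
The present paper does not prove this statement; it is quoted from \cite[Theorem~8.1]{FP} without argument and then used as input for the paper's own Theorem~\ref{main_thm1}. There is therefore no in-paper proof to compare your proposal against.

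Your outline is the standard ``Starfish'' sandwich $\mathcal A\subseteq\mathbb C[Gr(3,n)]\subseteq\mathcal U$ together with $\mathcal A=\mathcal U$, and with one correction it would go through. The correction concerns your justification of $\mathcal A=\mathcal U$: an acyclic representative is not available in general, since for $n\geq 9$ the cluster type is infinite and the mutable quivers are not mutation-equivalent to acyclic ones. One must instead supply $\mathcal A=\mathcal U$ for Grassmannians by other means (e.g.\ via the Berenstein--Fomin--Zelevinsky upper-bound results for double Bruhat cells, or a locally-acyclic argument). This is also not quite the route in \cite{FP}: there the seed $(\mathbf z(T_n),Q(T_n))$ for a particular triangulation is matched with a seed already known from Scott~\cite{S} to give the cluster structure on $\mathbb C[Gr(3,n)]$, and the statement for arbitrary $T_n$ follows because diagonal flips correspond to sequences of cluster mutations. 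Your approach is more self-contained but rebuilds machinery; theirs piggybacks on the existing Grassmannian cluster structure.
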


Let $\mathcal W(\varphi_m,\vartheta)$ be a BPS-free WKB spectral network where zeros of $\varphi_m(z)$ are on the real line in $\mathbb C$. As before, we assume that the labels of marked points are given as $31,21,23,13, \dots$ in the range of $(0,2\pi)$, with the asymptotic direction $l_1$ lying between the first $31$ and $21$. In this case, every compatible abelianization tree in $\mathcal T(\varphi_m,\vartheta)$ can be written as $\mathrm{T}_{p,q,r}$ for three distinct integers $p,q,r\in[1,m+3]$ by Proposition \ref{compatible trees on finite Gr(3,n)}. Furthermore, based on the proof process of Proposition \ref{compatible trees on finite Gr(3,n)}, we can describe the collection of compatible abelianization trees $\mathcal{T}(\varphi_m,\vartheta)$ explicitly as follows. 
\begin{itemize}
\item $m=2n\ge2$: 
\begin{equation}
\begin{split}
\mathcal{T}(\varphi_m,\vartheta)=&\{\mathrm{T}_{1,2,3},\mathrm{T}_{2,3,4},\dots,\mathrm{T}_{m+3,1,2}\}\cup\{\mathrm{T}_{1,2,m+2},\mathrm{T}_{2,m+2,m+3}\}\\
&\cup\{\mathrm{T}_{k,k+1,m+3-k}, \mathrm{T}_{k,k+1,m+4-k}: k\in[2,n]\cup[n+3,m+1]\},
\end{split}
\end{equation}
\item $m=2n+1\ge3$:
\begin{equation}
\begin{split}
\mathcal{T}(\varphi_m,\vartheta)=&\{\mathrm{T}_{1,2,3},\mathrm{T}_{2,3,4},\dots,\mathrm{T}_{m+3,1,2}\}\cup\{\mathrm{T}_{1,2,m+2},\mathrm{T}_{2,m+2,m+3},\mathrm{T}_{n+1,n+2,n+4},\\&\mathrm{T}_{n+1,n+3,n+4}\}\cup\{\mathrm{T}_{k,k+1,m+3-k}, \mathrm{T}_{k,k+1,m+4-k}: k\in[2,n]\cup[n+4,m+1]\},
\end{split}
\end{equation}
\end{itemize}
Note that $[k,l]=\emptyset$ if $k>l$. Consider the following triangulations $T_{m+3}$ determined by subsets of edges $E(T_{m+3})$ on a convex $(m+3)$-gon $P_{m+3}$:
\begin{itemize}
\item $m=2n\ge2$: 
\[
E({T_{m+3}}):=\{e_{k,m+3-k}:k\in[1,n]\}\cup\{e_{k,m+4-k}:k\in[2,n+1]\};
\]
\item $m=2n+1\ge3$:
\[
E({T_{m+3}}):=\{e_{k,m+3-k}:k\in[1,n+1]\cup\{e_{k,m+4-k}:k\in[2,n+1]\}.
\]
\end{itemize}
The collection of compatible abelianization associated with $T_{m+3}$ is constructed similarly to $K(T_n)$, using the following process:
\begin{enumerate}
\item For each edge $e_{i, j}\in E(T_{m+3})$, include $\mathrm{T}_{i,i+1,j}$ and $\mathrm{T}_{j,j+1,i}$;
\item For each triangle $t_{i,j,k}$ on $T_{m+3}$, include $\mathrm{T}_{i,j,k}$. 
\end{enumerate}
It is straightforward to show that the collection of compatible abelianization trees associated with $T_{m+3}$ is $\mathcal{T}(\varphi_m,\vartheta)$ in each case and we omit the proof here. (See Example \ref{eg:Gr(3,5)} for the case when $m=2$). Therefore, we obtain a natural bijection 
\[
\Psi_{T_{m+3}} : K(T_{m+3})\to \mathcal{T}(\varphi_m,\vartheta), \quad J_{i,j,k}\mapsto\mathrm{T}_{i,j,k}. 
\]
Based on these observations, we can now construct the cluster structures from the WKB spectral network $\mathcal W(\varphi_m,\vartheta)$ in a similar manner. We consider $\mathcal{T}(\varphi_m,\vartheta)$ as an extended cluster, where $\mathcal{T}^{\circ}(\varphi_m,\vartheta)$ is the set of cluster variables and its complement is the set of coefficient variables. The associated exchange relations for $\mathbf z(\varphi_m,\vartheta)$ associated with $\mathcal W(\varphi_m,\vartheta)$ include equations corresponding to equations in \Cref{ex3}. The quiver $Q(\varphi_m,\vartheta)$ associated with $\mathcal W(\varphi_m,\vartheta)$ is constructed in such a way the exchange relations \eqref{ex1} match the 3-term exchange relations associated with $\mathcal W(\varphi_m,\vartheta)$. Therefore, we can immediately conclude that $\mathcal A(Q(\varphi_m,\vartheta),\mathcal{T}(\varphi_m,\vartheta))$ is isomorphic to $\mathbb C[Gr(3,m+3)]$ in this case. Based on the analysis above, we can now state the following result. 

\begin{sloppypar}
\begin{theorem}
\label{main_thm1}
Let $\mathcal W(\varphi_m,\vartheta)$ be a BPS-free WKB spectral network, where $\varphi_m$ is a polynomial of degree $m\ge2$ whose zeros are almost on a line in $\mathbb C$. Then, there exists a quiver $Q(\varphi_m,\vartheta)$ associated with $\mathcal W(\varphi_m,\vartheta)$ such that the associated cluster algebra $\mathcal A(Q(\varphi_m,\vartheta),\mathcal{T}(\varphi_m,\vartheta))$ is isomorphic to $\mathbb C[Gr(3,m+3)]$. In particular, $\mathcal A(Q(\varphi_m,\vartheta), \mathcal{T}(\varphi_m,\vartheta))$ is of type $A_2, D_4, E_6, E_8$ when $m = 2,3,4,5$ respectively. Additionally, while changing $\vartheta$ along $\mathbb R$, the variations on $\mathcal{T}(\varphi_m,\vartheta)$ will correspond to a sequence of seed mutations on $\mathcal A(Q(\varphi_m,\vartheta),\mathcal{T}(\varphi_m,\vartheta))$.
\end{theorem}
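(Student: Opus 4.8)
The plan is to handle the three assertions in turn, reducing the first two to the combinatorial framework assembled immediately before the statement and reserving the real work for the mutation correspondence.

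First I would remove the \emph{almost on a line} hypothesis. By \Cref{almost on a line} the network $\mathcal W(\varphi_m,\vartheta)$ lies in the same chamber as a network $\mathcal W(\widetilde\varphi_m,\vartheta_1)$ whose zeros are exactly on a line, so their BPS counts, and hence their collections of compatible abelianization trees, coincide. \Cref{compatible trees on finite Gr(3,n)} guarantees that $\mathcal T(\varphi_m,\vartheta)$ consists of $3m+1$ trees, all of type $\mathrm{T}_{p,q,r}$, exactly as in the on-a-line case treated above. It therefore suffices to run the on-a-line argument: the bijection $\Psi_{T_{m+3}}:K(T_{m+3})\to\mathcal T(\varphi_m,\vartheta)$, $J_{i,j,k}\mapsto\mathrm{T}_{i,j,k}$, identifies the extended cluster together with its coefficient part with those of the triangulation $T_{m+3}$, and $Q(\varphi_m,\vartheta)$ is defined precisely so that the exchange relation \eqref{ex1} reproduces the three-term relations of \Cref{ex3}. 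Transporting \cite[Theorem 8.1]{FP} through $\Psi_{T_{m+3}}$ then yields the isomorphism $\mathcal A(Q(\varphi_m,\vartheta),\mathcal T(\varphi_m,\vartheta))\cong\mathbb C[Gr(3,m+3)]$.

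For the finite-type statement I would observe that the rank of $\mathcal A$ equals $|\mathcal T^{\circ}(\varphi_m,\vartheta)|=(3m+1)-(m+3)=2m-2$, which equals $2,4,6,8$ for $m=2,3,4,5$. Combining the isomorphism with the classification of finite-type Grassmannian cluster algebras \cite[Theorem 5]{S} identifies the types: $Gr(3,5)\cong Gr(2,5)$ is of type $A_2$, while $Gr(3,6),Gr(3,7),Gr(3,8)$ are of types $D_4,E_6,E_8$ respectively; since the ranks match, this pins down the assignment.

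The substantive part is the mutation correspondence, and here I would work entirely inside the geometric model of \Cref{geometric model}. As $\vartheta$ increases along $\mathbb R$ the curves rotate continuously about their endpoints, and by item (6) of the construction the only topology change occurs when $\vartheta\bmod 2\pi$ hits one of the BPS angles $\arg(\pm Z_{\gamma_{i,i+1}^{jk}})$ attached to an adjacent pair $\{z_i,z_{i+1}\}$, at which a finite curve forms and then resolves the opposite way. The key geometric input, already isolated in the proof of \Cref{compatible trees on finite Gr(3,n)}, is that each adjacent pair $\{z_i,z_{i+1}\}$ controls a unique pair of compatible trees of type $\mathrm{T}_{p,q,r}$. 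I would show that crossing such a wall replaces exactly one such tree $\mathrm{T}$ by a new tree $\mathrm{T}'$, carrying out the local rearrangement and checking against the compatibility constraints of \Cref{compatible trees} that $\mathrm{T}'$ is the unique admissible replacement. Translating through $\Psi_{T_{m+3}}$, this flip is a geometric exchange on the Postnikov diagram $P(\varphi_m,\vartheta)$, and I would verify that it matches a seed mutation at the vertex of $Q(\varphi_m,\vartheta)$ labelled by $\mathrm{T}$: the quiver transforms by quiver mutation (\Cref{mutation}) and the cluster variable produced by \eqref{ex1} is precisely the invariant attached to $\mathrm{T}'$ under $\Psi_{T_{m+3}}$, by comparing the three-term relation of the flipped configuration with the exchange relations of \Cref{ex3}. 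Iterating over the walls crossed as $\vartheta$ sweeps $\mathbb R$ then exhibits the variation of $\mathcal T(\varphi_m,\vartheta)$ as a sequence of seed mutations. The main obstacle I anticipate is exactly this local analysis: verifying that a single wall-crossing produces precisely one tree-flip (not several, and not a non-mutation rearrangement) and that the induced changes of quiver and of invariant agree on the nose with quiver mutation and the exchange relation. This calls for a careful case inspection of the junctions in \Cref{Junctions} near the colliding curves, organized — as in \Cref{compatible trees on finite Gr(3,n)} — by an induction on $m$ using the degenerations $D^{\pm}\mathcal W(\varphi_m,\vartheta)$ to reduce each wall-crossing to a bounded list of local models.
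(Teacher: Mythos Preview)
Your reduction in the first paragraph contains a genuine gap. From \Cref{almost on a line} you correctly extract that $\mathcal W(\varphi_m,\vartheta)$ and $\mathcal W(\widetilde\varphi_m,\vartheta_1)$ have the same BPS counts, but the inference ``and hence their collections of compatible abelianization trees coincide'' is false. The set $\mathcal T(\varphi_m,\vartheta)$ depends on the detailed topology of the network at the specific phase $\vartheta$, not merely on the chamber; indeed the whole content of the last assertion of the theorem is that these collections \emph{change} as one crosses BPS-ful phases. \Cref{compatible trees on finite Gr(3,n)} only tells you that there are $3m+1$ trees all of type $\mathrm T_{p,q,r}$, not that the actual triples $(p,q,r)$ agree with those coming from the fixed triangulation $T_{m+3}$. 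Consequently the bijection $\Psi_{T_{m+3}}$ need not land in $\mathcal T(\varphi_m,\vartheta)$, and the quiver $Q(\varphi_m,\vartheta)$ is not yet defined in the almost-on-a-line case by the discussion preceding the theorem.

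The paper repairs this by reversing your order of operations: it first establishes the mutation correspondence (your third paragraph), and only then \emph{defines} $Q(\varphi_m,\vartheta):=\mu(Q(\widetilde\varphi,\vartheta_1))$ as the image of the on-a-line quiver under the composite of the seed mutations corresponding to the BPS-ful crossings separating the two networks. The isomorphism with $\mathbb C[Gr(3,m+3)]$ then follows because seed-mutation-equivalent seeds generate the same cluster algebra. For the mutation correspondence itself, the paper does not use your proposed induction via the degenerations $D^{\pm}\mathcal W$; instead it performs a direct and explicit case analysis, splitting into eight cases according to the parity of $m$ and the position of the adjacent pair $\{z_i,z_{i+1}\}$ (endpoints versus interior, even versus odd index), writing down in each case the exact pair of trees before and after the crossing and matching them against a specific three-term exchange relation from \Cref{ex3}. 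Your inductive scheme might be made to work, but the paper's explicit enumeration both avoids the local-model bookkeeping you anticipate as the main obstacle and simultaneously supplies the concrete formulae used later in \Cref{generator} and \Cref{transformation}.
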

\end{sloppypar}

\begin{proof}
In the previous discussion, we defined the quiver $Q(\varphi_m,\vartheta)$ for the case when zeros of $\varphi_m(z)$ are on a line. Now we consider the case when zeros of $\varphi_m(z)$ are almost on a line in $\mathbb C$. By definition, there exists another BPS-free WKB spectral network $\mathcal W(\widetilde{\varphi},\theta_1)$ where zeros of $\widetilde{\varphi}(z)$ are on a line in $\mathbb C$ such that $\mathcal W(\varphi_m,\vartheta)$ can be obtained from $\mathcal W(\widetilde{\varphi},\theta_1)$ by crossing certain BPS-ful locus associated with a subset of adjacent pairs of zeros in $\mathrm{Zero}(\widetilde{\varphi})$. We claim that the variation on $\mathcal{T}(\widetilde{\vartheta},\vartheta_1)$ by crossing a BPS-ful locus corresponds to a sequence of seed mutations on $\mathcal A(Q(\widetilde{\varphi},\vartheta_1), \mathcal T(\widetilde{\varphi},\vartheta_1))$. As before, we assume that zeros of $\widetilde{\varphi}(z)$ are on the real line and the labels of marked points of $\mathcal W(\widetilde{\varphi},\vartheta_1)$ are given as $31,21,23,13, \dots$ in the range of $(0,2\pi)$, with the asymptotic direction $l_1$ being between the first $31$ and $21$. For simplicity, we will only consider counterclockwise crossings.

According to the proof process of Proposition \ref{compatible trees on finite Gr(3,n)}, we can write down the variations of pairs of compatible abelianization trees determined by pairs of adjacent zeros $\{z_i,z_{i+1}\}_{i=1}^{m-1}$ by crossing the associated BPS-ful locus counterclockwise as follows:
\begin{sloppypar}
\begin{enumerate}
\item when $m=2n\ge2$ and $i=1$, $\{\mathrm{T}_{n,n+1,n+3}$, $\mathrm{T}_{n+1,n+3,n+4}\}$ changes into $\{\mathrm{T}_{n,n+1,n+3}$, $\mathrm{T}_{n,n+2,n+3}\}$;
\item when $m=2n\ge2$ and $i=m-1$, $\{\mathrm{T}_{1,2,m+2}$, $\mathrm{T}_{2, m+2, m+3}\}$ changes into $\{\mathrm{T}_{1, 2, m+2}$, $\mathrm{T}_{1,m+1,m+2}\}$;
\item when $m$ $=$ $2n\ge2$ and $2\le i=2k\le m-2$, $\{\mathrm{T}_{n+1-k, n+2+k, n+3+k}$, $\mathrm{T}_{n+1-k, n+2-k, n+3+k}\}$ changes into $\{\mathrm{T}_{n+1-k,n+2+k,n+3+k}$, $\mathrm{T}_{n-k,n+1-k,n+2+k}\}$;
\item when $m=2n\ge2$ and $3\le i=2k-1\le m-2$, $\{\mathrm{T}_{n+2-k,n+2+k,n+3+k}$, $\mathrm{T}_{n+1-k,n+2-k,n+2+k}\}$ changes into $\{\mathrm{T}_{n+1-k,n+1+k,n+2+k},\mathrm{T}_{n+1-k,n+2-k,n+2+k}\}$;
\item when $m=2n+1\ge3$ and $i=1$, $\{\mathrm{T}_{n+1,n+2,n+4},\mathrm{T}_{n+1,n+3,n+4}\}$ changes into $\{\mathrm{T}_{n+1,n+3,n+4},\mathrm{T}_{n,n+1,n+3}\}$;
\item when $m=2n+1\ge3$ and $i=m-1$, $\{\mathrm{T}_{1,2,m+2},\mathrm{T}_{2,m+2,m+3}\}$ changes into $\{\mathrm{T}_{1,m+1,m+2},\mathrm{T}_{1,2,m+2}\}$;
\item when $m=2n+1\ge3$ and $2\le i=2k\le m-2$,  $\{\mathrm{T}_{n+2-k,n+3+k,n+4+k}$,    $\mathrm{T}_{n+1-k,n+2-k,n+3+k}\}$ changes into $\{\mathrm{T}_{n+1-k,n+2+k,n+3+k}$, $\mathrm{T}_{n+1-k,n+2-k,n+3+k}\}$;
\item when $m=2n+1\ge3$ and $3\le i=2k-1\le m-2$, $\{\mathrm{T}_{n+2-k,n+2+k,n+3+k}$, $\mathrm{T}_{n+2-k,n+3-k,n+3+k}\}$ changes into $\{\mathrm{T}_{n+2-k,n+2+k,n+3+k},\mathrm{T}_{n+1-k,n+2-k,n+2+k}\}$;
\end{enumerate}
\end{sloppypar}
By observing the exchange relations in \Cref{ex3} associated with $\mathcal A(Q(\widetilde{\varphi},\vartheta_1)$, $\mathcal T(\widetilde{\varphi},\vartheta_1))$, we can obtain the following mutations with respect to each pair of compatible abelianization trees determined by pairs of adjacent zeros in $\mathrm{Zero}(\widetilde{\varphi})$:
\begin{enumerate}
\item when $m=2n\ge2$ and $i=1$,
\begin{equation*}
\begin{split}
\mathrm{T}_{n+1,n+3,n+4}\mathrm{T}_{n,n+2,n+3}=&\mathrm{T}_{n+2,n+3,n+4}\mathrm{T}_{n,n+1,n+3}+\mathrm{T}_{n,n+3,n+4}\mathrm{T}_{n+1,n+2,n+3};
\end{split}
\end{equation*}
\item when $m=2n\ge2$ and $i=m-1$,
\begin{equation*}
\begin{split}
\mathrm{T}_{2,m+2,m+3}\mathrm{T}_{1,m+1,m+2}=&\mathrm{T}_{m+1,m+2,m+3}\mathrm{T}_{1,2,m+2}+\mathrm{T}_{1,m+2,m+3}\mathrm{T}_{2,m+1,m+2};
\end{split}
\end{equation*}
\item when $m=2n\ge2$ and $2\le i=2k\le m-2$,
\begin{equation*}
\begin{split}
\mathrm{T}_{n+1-k,n+2-k,n+3+k}\mathrm{T}_{n-k,n+1-k,n+2+k}=&
\mathrm{T}_{n-k,n+1-k,n+2-k}\mathrm{T}_{n+1-k,n+2+k,n+3+k}\\
&+\mathrm{T}_{n+1-k,n+2-k,n+2+k}\mathrm{T}_{n-k,n+1-k,n+3+k};
\end{split}
\end{equation*}
\item when $m=2n\ge2$ and $3\le i=2k-1\le m-2$,
\begin{equation*}
\begin{split}
\mathrm{T}_{n+2-k,n+2+k,n+3+k}\mathrm{T}_{n+1-k,n+1+k,n+2+k}=&
\mathrm{T}_{n+1+k,n+2+k,n+3+k}\mathrm{T}_{n+1-k,n+2-k,n+2+k}\\
&+\mathrm{T}_{n+1-k,n+2+k,n+3+k}\mathrm{T}_{n+2-k,n+1+k,n+2+k};
\end{split}
\end{equation*}
\item when $m=2n+1\ge3$ and $i=1$,
\begin{equation*}
\begin{split}
\mathrm{T}_{n+1,n+2,n+4}\mathrm{T}_{n,n+1,n+3}=&\mathrm{T}_{n,n+1,n+2}\mathrm{T}_{n+1,n+3,n+4}+\mathrm{T}_{n+1,n+2,n+3}\mathrm{T}_{n,n+1,n+4};
\end{split}
\end{equation*}
\item when $m=2n+1\ge3$ and $i=m-1$,
\begin{equation*}
\begin{split}
\mathrm{T}_{2,m+2,m+3}\mathrm{T}_{1,m+1,m+2}=&\mathrm{T}_{m+1,m+2,m+3}\mathrm{T}_{1,2,m+2}+\mathrm{T}_{1,m+2,m+3}\mathrm{T}_{2,m+1,m+2}.
\end{split}
\end{equation*}
\item when $m=2n+1\ge3$ and $2\le i=2k\le m-2$,
\begin{equation*}
\begin{split}
\mathrm{T}_{n+2-k,n+3+k,n+4+k}\mathrm{T}_{n+1-k,n+2+k,n+3+k}=&
\mathrm{T}_{n+2+k,n+3+k,n+4+k}\mathrm{T}_{n+1-k,n+2-k,n+3+k}\\
&+\mathrm{T}_{n+1-k,n+3+k,n+4+k}\mathrm{T}_{n+2-k,n+2+k,n+3+k}.
\end{split}
\end{equation*}
\item when $m=2n+1\ge3$ and $3\le i=2k-1\le m-2$,
\begin{equation*}
\begin{split}
\mathrm{T}_{n+2-k,n+3-k,n+3+k}\mathrm{T}_{n+1-k,n+2-k,n+2+k}=&
\mathrm{T}_{n+1-k,n+2-k,n+3-k}\mathrm{T}_{n+2-k,n+2+k,n+3+k}\\
&+\mathrm{T}_{n+2-k,n+3-k,n+2+k}\mathrm{T}_{n+1-k,n+2-k,n+3+k}.
\end{split}
\end{equation*}
\end{enumerate}
Note that these exchange relations are independent of the order in which the seed mutations occur when crossing the BPS-ful locus.
Therefore, the claim regarding $\mathcal A(Q(\widetilde{\varphi},\vartheta_1)$, $\mathcal T(\widetilde{\varphi},\vartheta_1))$ is valid. Similarly, we can derive $\mathcal{T}(\varphi_m,\vartheta)$ by a sequence of seed mutations, denoted by $\mu$, on the seed $(Q(\varphi_1,\vartheta_1)$, $\mathcal{T}(\widetilde{\varphi},\vartheta_1)$. We can now define 
\[
Q(\varphi_m,\vartheta):=\mu(Q(\widetilde{\varphi},\vartheta_1)),
\]
which satisfies
\[
\mathcal A(Q(\varphi_m,\vartheta),\mathcal{T}(\varphi_m,\vartheta))=\mathcal A(\mu(Q(\widetilde{\varphi},\vartheta_1),\mathcal{T}(\widetilde{\varphi},\vartheta_1)))\cong\mathbb C[Gr(3,m+3)].
\]
\end{proof}

It is still worth noting that this requirement defines $Q(\varphi_m,\vartheta)$ up to the simultaneous reversal of directions of all edges incident to any subset of connected components of the mutable part of the quiver. To further determine the directions of arrows of the quiver $Q(\varphi_m,\vartheta)$, we make the following conventions based on the first property of the associated Riemann-Hilbert problem \ref{main problem}. Without loss of generality, assume that zeros of $\varphi_m(z)$ are almost on the real line. Furthermore, by varying $\vartheta$, we can further assume that the labels of marked points of $\mathcal{W}(\varphi_m,\vartheta)$ are given as $31,21,23,13, \dots$ in the range of $(0,2\pi)$, with the asymptotic direction $l_1$ being between the first $31$ and $21$.

\begin{convention}
\label{convention}
The direction of the arrow between $\mathrm{T}_{1,2,3}$ and $\mathrm{T}_{1,2,m+2}$ in $Q(\varphi_m,\vartheta)$ is fixed to be from $\mathrm{T}_{1,2,3}$ to $\mathrm{T}_{1,2,m+2}$.
\end{convention}

Theorem \ref{main_thm1} and the proof procedure of \cite[Theorem 3]{S} allow us to derive the following conclusion directly.

\begin{corollary}
\label{Postnikov diagram}
Let $\mathcal W(\varphi_m,\vartheta)$ be a BPS-free WKB spectral network, where $\varphi_m$ is a polynomial of degree $m\ge2$ whose zeros are almost on a line in $\mathbb C$. Then $\mathcal T(\varphi_m,\vartheta)$ can be embedded in a Postnikov diagram $P(\varphi_m,\vartheta)$ of type $(3,m+3)$, i.e., each compatible abelianization tree $\mathrm{T}_{p,q,r}\in\mathcal{T}(\varphi_m,\vartheta)$ corresponds to a 3-subset $(p,q,r)$ in $P(\varphi_m,\vartheta)$. Additionally, while changing $\vartheta$ along $\mathbb R$, the variations on $\mathcal{T}(\varphi_m,\vartheta)$ will correspond to a sequence of geometric exchanges on the Postnikov diagram $P(\varphi_m,\vartheta)$.
\end{corollary}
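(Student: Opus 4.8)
The plan is to read off the statement from Theorem \ref{main_thm1} by transporting the cluster-theoretic picture established there into the language of Postnikov diagrams, exactly as in the proof procedure of \cite[Theorem 3]{S}. The starting observation is that, by Proposition \ref{compatible trees on finite Gr(3,n)}, for every $\vartheta$ each compatible abelianization tree in $\mathcal T(\varphi_m,\vartheta)$ has the form $\mathrm T_{p,q,r}$ and is thus indexed by a $3$-subset $(p,q,r)$ of $\{1,\dots,m+3\}$. Under the isomorphism $\mathcal A(Q(\varphi_m,\vartheta),\mathcal T(\varphi_m,\vartheta))\cong\mathbb C[Gr(3,m+3)]$ of Theorem \ref{main_thm1} (realized through the bijection $\Psi_{T_{m+3}}$ sending $\mathrm T_{i,j,k}$ to the special invariant $J_{i,j,k}$), the extended cluster $\mathcal T(\varphi_m,\vartheta)$ is carried to a collection of $3m+1$ Pl\"ucker coordinates of $Gr(3,m+3)$, of which $\mathcal T^{\circ}(\varphi_m,\vartheta)$ gives the $2m-2$ cluster variables and the complement gives the $m+3$ frozen ones. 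In other words, every seed that occurs is a \emph{Pl\"ucker seed}.

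First I would invoke Scott's correspondence. A reduced Postnikov diagram of type $(3,m+3)$ has exactly $3\big((m+3)-3\big)+1=3m+1$ faces, of which $m+3$ are boundary faces and $2m-2$ are internal, each carrying a $3$-subset label; by \cite[Theorem 3]{S} the resulting collection of Pl\"ucker coordinates is an extended cluster for $\mathbb C[Gr(3,m+3)]$ whose quiver is dual to the diagram, and conversely every Pl\"ucker seed arises in this way (the face labels being a maximal weakly separated collection). Since this face count agrees with $|\mathcal T(\varphi_m,\vartheta)|=3m+1$, associating to the Pl\"ucker seed coming from $\mathcal T(\varphi_m,\vartheta)$ the corresponding Postnikov diagram $P(\varphi_m,\vartheta)$ yields the desired embedding $\mathrm T_{p,q,r}\mapsto$ the face labelled $(p,q,r)$, with the boundary faces indexed by the coefficient trees $\mathrm T_{1,2,3},\dots,\mathrm T_{m+3,1,2}$ and the internal faces by $\mathcal T^{\circ}(\varphi_m,\vartheta)$; the quiver $Q(\varphi_m,\vartheta)$ then matches the one read off from $P(\varphi_m,\vartheta)$ up to the usual reversal ambiguity.

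It remains to match variation of $\vartheta$ with geometric exchanges. By Theorem \ref{main_thm1}, moving $\vartheta$ along $\mathbb R$ across a BPS-ful locus effects a seed mutation, and by Proposition \ref{compatible trees on finite Gr(3,n)} both the seed before and the seed after the crossing are again Pl\"ucker seeds; hence each such mutation is a mutation between two Pl\"ucker seeds. Under Scott's correspondence a mutation carrying one Pl\"ucker seed to another is precisely a geometric exchange (square move) on the associated Postnikov diagram, relabelling a single internal face via a three-term Pl\"ucker relation. Comparing with the explicit relations displayed in the proof of Theorem \ref{main_thm1}, each is of the binomial form $\mathrm T_{a}\,\mathrm T_{a'}=\mathrm T_{b}\,\mathrm T_{c}+\mathrm T_{d}\,\mathrm T_{e}$ with all six terms Pl\"ucker coordinates, which is exactly the local relation realized by a geometric exchange. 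Therefore the sequence of seed mutations produced as $\vartheta$ varies corresponds step by step to a sequence of geometric exchanges on $P(\varphi_m,\vartheta)$.

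The main obstacle is to guarantee that no intermediate seed leaves the locus of Pl\"ucker seeds, i.e.\ that mutation never produces a cluster variable which is not a Pl\"ucker coordinate, since only then does ``mutation'' coincide with ``geometric exchange.'' This is exactly what Proposition \ref{compatible trees on finite Gr(3,n)} secures: for every $\vartheta$ all compatible abelianization trees are of type $\mathrm T_{p,q,r}$, so every cluster along the path consists of $3$-subsets. Once this is granted, identifying each mutation with a square move is formal, and the remaining bookkeeping, namely checking that the two $3$-subsets exchanged at each geometric exchange agree with the tree variations recorded in Theorem \ref{main_thm1}, is a direct comparison of the Pl\"ucker relations in \Cref{ex3} with that list and requires no new ideas.
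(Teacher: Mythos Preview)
Your proposal is correct and follows exactly the route the paper indicates: the paper gives no detailed proof for this corollary, stating only that it follows directly from Theorem~\ref{main_thm1} together with the proof procedure of \cite[Theorem~3]{S}, and your argument spells out precisely this deduction. The one point worth noting is that the converse direction you invoke (``every Pl\"ucker seed arises from a Postnikov diagram'') is somewhat stronger than what Scott's original theorem provides, but in the present setting this is harmless: the initial seed is explicitly built from a triangulation $T_{m+3}$ via the Fomin--Pylyavskyy recipe, which Scott identifies with a specific Postnikov diagram, and every subsequent seed is reached by mutations that you correctly verify remain Pl\"ucker-to-Pl\"ucker, so each step is a geometric exchange and the Postnikov diagram is carried along inductively rather than conjured from the seed alone.
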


\begin{example}[Gr(3,5)]
\label{eg:Gr(3,5)}
We continue with Example \ref{ex:trees on Gr(3,5)}. Note that 
\[
\mathcal{T}^{\circ}(\frac{1}{2}(-z^2+1),\vartheta)=\{\mathrm{T}_{2,4,5},\mathrm{T}_{1,2,4}\}.
\]
The associated exchange relations with respect to mutable vertices are described as follows:
\begin{itemize}
\item $r=4,p=2,q=3,s=1$:
\[
\mathrm{T}_{2,4,5}\mathrm{T}_{1,3,4}=\mathrm{T}_{3,4,5}\mathrm{T}_{1,2,4}+\mathrm{T}_{1,4,5}\mathrm{T}_{2,3,4}.
\]
\item $r=2,p=5,q=1,s=4$:
\[
\mathrm{T}_{2,3,5}\mathrm{T}_{1,2,4}=\mathrm{T}_{1,2,3}\mathrm{T}_{2,4,5}+\mathrm{T}_{2,3,4}\mathrm{T}_{1,2,5}.
\]
\end{itemize}
Based on these exchange relations and following the \Cref{convention}, we can obtain the associated quiver $Q(\varphi_2,\vartheta)$ shown in \Cref{associated quiver}. Regarding the triangulation $T_5$ on the pentagon $P_5$ in Figure \ref{triangulation m=2}, as mentioned in the proof procedure of Theorem \ref{main_thm1}, we have
\[
E(T_5)=\{e_{1,2},e_{2,3},e_{3,4},e_{4,5},e_{5,1},e_{1,4},e_{2,4}\},
\]
and the collection of compatible abelianization associated with $T_{m+3}$ is given as
\[
\{\mathrm T_{1,2,3},\mathrm T_{2,3,4}, \mathrm T_{3,4,5}, \mathrm T_{1,4,5}, \mathrm T_{1,2,5}, \mathrm T_{1,2,4}, \mathrm T_{2,4,5}\},
\]
which is equal to $\mathcal T(\frac{1}{2}(-z^2+1),\vartheta)$.
\begin{figure}
\[
\begin{tikzcd}[column sep=small]
&& {\mathrm T_{2,3,4}} \\
{\mathrm T_{3,4,5}} & {\mathrm T_{2,4,5}} && {\mathrm T_{1,2,4}} & {\mathrm T_{1,2,3}} \\
& {\mathrm T_{1,4,5}} && {\mathrm T_{1,2,5}}
\arrow[from=2-4, to=1-3]
\arrow[from=1-3, to=2-2]
\arrow[from=2-5, to=2-4]
\arrow[from=2-2, to=2-4]
\arrow[from=3-2, to=2-2]
\arrow[from=2-2, to=2-1]
\arrow[from=2-4, to=3-4]
\end{tikzcd}
\]
\caption{Associated quiver $Q(\varphi_2,\vartheta)$.}
\label{associated quiver}
\end{figure}
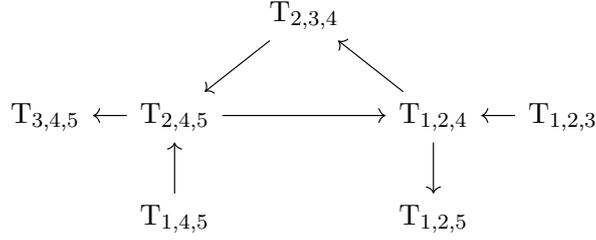
\begin{figure}
\centering
\begin{tikzpicture}
\coordinate[label=above:$1$] (A) at (90:1.5);
\coordinate[label=left:$2$] (B) at (162:1.5);
\coordinate[label=left:$3$] (C) at (234:1.5);
\coordinate[label=right:$4$] (D) at (306:1.5);
\coordinate[label=above:$5$] (E) at (18:1.5);
\draw (A) -- (B) -- (C) -- (D) -- (E) -- cycle;
\draw (A) -- (D);
\draw (B) -- (D);
\end{tikzpicture}
\caption{$P_5$ with a triangulation $T_5$.}
\label{triangulation m=2}
\end{figure}
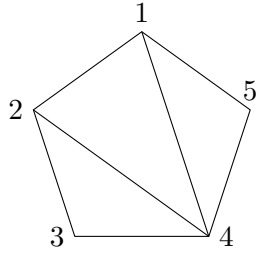

Therefore, we have
\[
\mathcal A(Q(\frac{1}{2}(-z^2+1),\vartheta),\mathcal{T}(\frac{1}{2}(-z^2+1),\vartheta))\cong\mathbb C[Gr(3,5)].
\]
In addition, Figure \ref{Postnikov_diagram} depicts the corresponding Postnikov diagram $P(\frac{1}{2}(-z^2+1),\vartheta)$.
\begin{figure}
\centering
\includegraphics[width=0.3\textwidth]{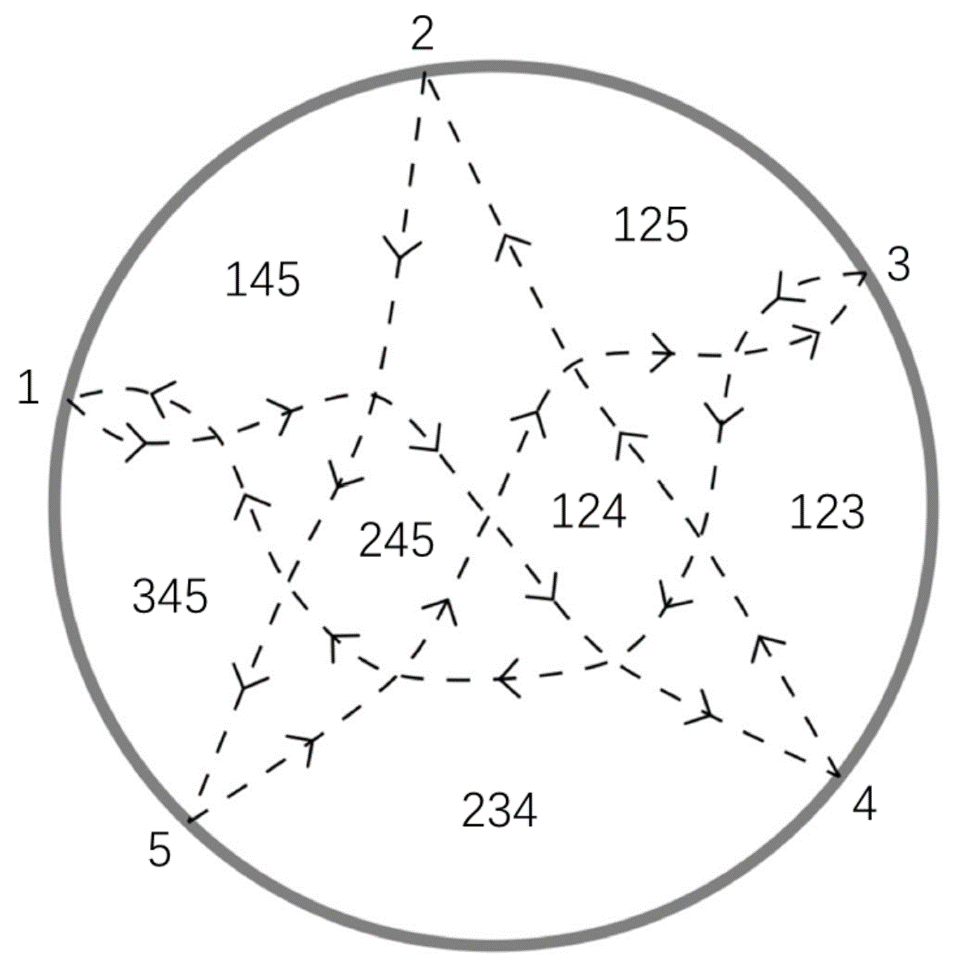}
\caption{The corresponding Postnikov diagram $P(\frac{1}{2}(-z^2+1),\vartheta)$ of type $(3,5)$.}
\label{Postnikov_diagram}
\end{figure}
\end{example}

\subsection{Spectral coordinates from WKB spectral networks}
For a given WKB spectral network $\mathcal W(\varphi,\vartheta)$, each asymptotic direction $l_i$ is now given a nonzero vector $x_i=(x_{i,1},x_{i,2},x_{i,3})^{t}$ in a three dimensional complex vector space $V$. Each compatible abelianizaiton tree has a corresponding $\mathrm{SL}(V)$ invariant in $\mathbb C[Gr(3,m+3)]$ defined below that adheres to the notion of $\mathrm{SL}(V)$ invariants for \emph{tensor diagrams} in \cite{FP}. Before that, a bipartification on each compatible abelianization tree in $\mathcal{T}(\varphi,\vartheta)$ needs to be fixed, which is made possible by Lemma \ref{Bipartification}. Let $\mathrm{T}$ be a compatible abelianization tree with a bipartification, where $a$ out of $a+b$ boundary nodes are white and $c$ out of $c+d$ interior nodes are white. It is clear that $a+3c=b+3d$. We now create a square $(a+3c)\times (b+3d)$ matrix $M(\mathrm T)$ as follows.
\begin{itemize}
\item Each of the white interior ends and the edges incident to the white boundary ends contributes three and one, respectively, to the rows of $M(\mathrm T)$;
\item Each of the black interior ends and the edges incident to the black boundary ends contribute three and one, respectively, to the columns of $M(\mathrm T)$.
\end{itemize}
Using rectangular blocks $M(i,j)$ of sizes $3\times 3$, $3\times 1$, $1\times 3$, and $1\times 1$, the matrix $M(\mathrm T)$ is constructed as follows:
\begin{itemize}
\item $M(i,j)$ is a $3\times3$ identity matrix if $i$ is a white interior end and $j$ is a black interior end incident to $i$;
\item $M(i,j)$ is the column vector $x(v)$ if $i$ is a white interior end and $j$ is an edge incident to $i$ and a black boundary end $v$.
\item $M(i,j)$ is the row vector $x(v)^{t}$ if $j$ is a black interior end and $i$ is an edge incident to $j$ and a white boundary end $v$.
\item otherwise, $M(i,j)$ consists entirely of zeros.
\end{itemize}

\begin{definition}
\label{SL(V) invariants}
The $\mathrm{SL}(V)$ invariant associated to $\mathrm{T}$, denoted by $A_{\mathrm T}$, is defined as $\mathrm{det}(M(\mathrm{T}))$. 
\end{definition}
If there is no confusion, for simplicity, we denote $A_{\mathrm{T}}$ as $A_{\ast}$, where $\ast$ represents the ordered boundary nodes of $\mathrm T$. For example, if $\mathrm T=\mathrm{T}_{p,q,r}$, then $A_{p,q,r}:=A_{\mathrm{T}_{p,q,r}}$.

\begin{example}
Let $\mathcal W(\varphi_m,\vartheta)$ be any BPS-free WKB spectral network where $\varphi_m=(z-z_1)\cdots(z-z_m)$ is a polynomial of degree $m\ge2$ whose zeros are almost on a line in $\mathbb C$. Let $\mathrm T\in\mathcal{T}(\varphi_m,\vartheta)$. According to Proposition \ref{compatible trees on finite Gr(3,n)}, $\mathrm{T}$ can be written as $\mathrm{T}_{p,q,r}$ for three distinct integers, which has a unique trivalent junction and three nodes on the boundary of $\overline{\mathbb C}$. We assign white color to the junction on $\mathrm{T}_{p,q,r}$ to determine the bipartification on $\mathrm{T}$. Then, the associated $\mathrm{SL}(V)$ invariant $A_{p,q,r}$ is 
\[
\mathrm{det}
\begin{bmatrix}
x_{p1}&x_{q1}&x_{r1}\\
x_{p2}&x_{q2}&x_{r2}\\
x_{p3}&x_{q3}&x_{r3}\\
\end{bmatrix}.
\]
\end{example}

Let $\mathcal W(\varphi_m,\vartheta)$ be a BPS-free WKB spectral network, where zeros of $\varphi_m(z)$ are almost on a line in $\mathbb C$. We will now describe the process of obtaining the spectral coordinates $\{X_{\gamma}\}$ with respect to $\Gamma(\varphi_m)$. We begin by defining the spectral coordinates $\{X_{\mathrm{T}}\}$ with respect to $\mathcal{T}^{\circ}(\varphi_m,\vartheta)$. 

\begin{definition}
\label{spectral coordinate1}
The spectral coordinate $X_{\mathrm{T}}$ with respect to a compatible abelianization tree $\mathrm{T}\in\mathcal{T}^{\circ}(\varphi_m,\vartheta)$ is defined as follows:
\[
X_{\mathrm{T}}:=\frac{\displaystyle\prod_{a\in Q_1(\varphi_m,\vartheta),t(a)=\mathrm{T}} A_{s(a)}}{\displaystyle\prod_{b\in Q_1(\varphi_m,\vartheta),s(b)=\mathrm{T}} A_{t(b)}},
\]
where $Q_1(\varphi_m,\vartheta)$ denotes the set of arrows in $Q(\varphi_m,\vartheta)$, and $s(a)$ and $t(a)$ denote the source and target of an arrow $a$, respectively.
\end{definition}

Corollary \ref{Postnikov diagram} yields that $\mathcal T(\varphi_m,\vartheta)$ can be embedded in a Postnikov diagram $P(\varphi_m,\vartheta)$ of type $m+3$. In fact, $\mathrm{T}\in Q_1(\varphi_m,\vartheta)$ can also be obtained from $P(\varphi_m,\vartheta)$. Then, for any mutable vertex $\mathrm{T}\in Q_1(\varphi_m,\vartheta)$, the number of arrows targeting and originating from $\mathrm{T}$ coincide, i.e.,
\[
\sharp\{a\in Q_1(\varphi_m,\vartheta),t(a)=\mathrm{T}\}=\sharp\{b\in Q_1(\varphi_m,\vartheta),s(b)=\mathrm{T}\}.
\]
It is now possible to conisder $X_{\mathrm{T}}$ as a rational function on $(\mathbb{CP}^2)^{m+3}/\mathrm{SL(3,\mathbb C)}$ since it is invariant under $\mathrm{SL}(3,\mathbb C)$ and $(\mathbb C^{\times})^{m+3}$. To assign a generator $\gamma\in\Gamma(\varphi_m)$ to each $X_{\mathrm{T}}$, we need the following setup \cite{N1}.

The lift of each arc $p$ of an abelianization tree $\mathrm T_m\in\mathcal{T}(\varphi_m,\vartheta)$ to a 1-chain $\widetilde p$ on $\Sigma(\varphi_m)$ is canonical: if the sheet $i$ labels $p$, then $\widetilde p$ is the lift of $p$ to a sheet $i$ and $\widetilde p$ has the same orientation as $p$ if $p$ carries representation $V$, otherwise it has the opposite orientation. Consider a formal linear combination of abelianization trees $\Sigma\omega_k\mathrm{T}_k\in\mathbb Z[\mathcal{T}(\varphi_m,\vartheta)]$ with weights $\omega_k\in\mathbb Z$, which we take as a $1$-chain in the singular chain complex $C_{\ast}(\Sigma(\varphi_m))$. By projecting $\Sigma\omega_k\mathrm{T}_k\in\mathbb Z[\mathcal{T}(\varphi_m,\vartheta)]$ to the quotient of the singular chain complex $C_{\ast}(\Sigma(\varphi_m))$ by the subgroup of 0-chains associated to those trivalent junctions in $\{\mathrm{T}_k\}$ pulled back from $\overline{\mathbb C}$, we obtain a class $\left[\Sigma\omega_k\mathrm{T}_k\right]\in\Gamma(\varphi_m)$.

For each spectral coordinate $X_{\mathrm{T}}$ with respect to a compatible abelianization tree $\mathrm{T}\in\mathcal{T}^{\circ}(\varphi_m,\vartheta)$, there is a natural corresponding class
\begin{equation}
\label{spectral coordinate0}
[X_{\mathrm{T}}]:=\left[\sum_{a\in Q_1(\varphi_m,\vartheta),t(a)=\mathrm{T}}s(a)-\sum_{b\in Q_1(\varphi_m,\vartheta),s(b)=\mathrm{T}}t(b)\right] \in\Gamma(\varphi_m).
\end{equation}
The spectral coordinate $X_{\gamma_{\mathrm T}}$ with repect to the class $\gamma_{\mathrm T}$ is naturally defined as $X_{\mathrm T}$. 

\begin{proposition}
\label{generator}
Let $\mathcal W(\varphi_m,\vartheta)$ be a BPS-free WKB spectral network, where $\varphi_m$ is a polynomial of degree $m\ge2$ whose zeros are almost on a line in $\mathbb C$. Then $\{[X_{\mathrm{T}}]: \mathrm{T}\in\mathcal{T}^{\circ}(\varphi_m,\vartheta)\}$ generates a basis in $\Gamma(\varphi_m)$.
\end{proposition}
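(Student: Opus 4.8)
The plan is to reduce the statement to a unimodularity check and then verify it by an induction that parallels Proposition~\ref{compatible trees on finite Gr(3,n)}. First I would record the rank coincidence: the count in the proof of Proposition~\ref{compatible trees on finite Gr(3,n)} gives $|\mathcal{T}^{\circ}(\varphi_m,\vartheta)|=2m-2$, which is exactly $\mathrm{rank}\,\Gamma(\varphi_m)=2m-2$. Since $\Gamma(\varphi_m)$ is free of this rank, it suffices to prove that the family $\{[X_{\mathrm{T}}]:\mathrm{T}\in\mathcal{T}^{\circ}(\varphi_m,\vartheta)\}$ \emph{generates} $\Gamma(\varphi_m)$ over $\mathbb{Z}$; equivalently, that the matrix $C$ expressing these classes in the explicit basis $\{\gamma_{i,i+1}^{12},\gamma_{i,i+1}^{23}\}_{i=1}^{m-1}$ of Figure~\ref{generators} has $\det C=\pm 1$. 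Linear independence alone would not suffice for a free $\mathbb{Z}$-module, so the whole point is to control $\det C$.

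Second, I would exploit the organisation of $\mathcal{T}^{\circ}(\varphi_m,\vartheta)$ into $m-1$ \emph{pairs} of trees, one attached to each adjacent pair of zeros $\{z_i,z_{i+1}\}$, as established in the proof of Proposition~\ref{compatible trees on finite Gr(3,n)}. For a fixed pair I would compute the two classes directly from \eqref{spectral coordinate0}: each $[X_{\mathrm{T}}]$ is the image in $\Gamma(\varphi_m)$ of the signed sum of the quiver-neighbours of $\mathrm{T}$, formed via the canonical lift of abelianization trees to $1$-chains on $\Sigma(\varphi_m)$ together with the junction cancellations recalled before \eqref{spectral coordinate0}. Guided by the base cases $m=2$ (Example~\ref{eg:Gr(3,5)}) and $m=3,4$ (Figures~\ref{ex:compatible tree of Gr(3,6)} and~\ref{ex:compatible tree of Gr(3,7)}), I expect the pair attached to $\{z_i,z_{i+1}\}$ to contribute, as its leading terms, two classes whose $2\times 2$ block is unimodular because $\gamma_{i,i+1}^{13}=\gamma_{i,i+1}^{12}+\gamma_{i,i+1}^{23}$, together with correction terms supported only on the generators of the neighbouring pairs.

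Third, to turn the ``leading term plus corrections'' picture into an honest determinant computation I would run an induction on $m$ using the degenerations $D^{\pm}\mathcal W(\varphi_m,\vartheta)$ of Definition~\ref{degeneration}. Removing the extremal zero $z_m$ identifies the trees surviving in $D^{+}$ with $\mathcal{T}^{\circ}(\varphi_{+},\vartheta)$ for $\varphi_{+}(z)=(z-z_1)\cdots(z-z_{m-1})$, and realises $\Gamma(\varphi_m)$ as an extension of $\Gamma(\varphi_{+})$ by the rank-two summand $\mathbb{Z}\gamma_{m-1,m}^{12}\oplus\mathbb{Z}\gamma_{m-1,m}^{23}$. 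Under this splitting the inductive hypothesis makes the $\Gamma(\varphi_{+})$-block of $C$ unimodular, while the pair of trees created by reinstating $z_m$ is localised near $\{z_{m-1},z_m\}$ and contributes the new generators in the $\gamma_{m-1,m}$-directions; ordering the pairs by $i$ should then make $C$ block triangular with unimodular diagonal blocks, whence $\det C=\pm 1$ and the classes form a basis. As a conceptual check, this is consistent with the cluster-theoretic fact that, under the identification of seed mutations with the BPS wall-crossings of Theorem~\ref{main_thm1}, the classes $[X_{\mathrm{T}}]$ play the role of the $c$-vectors of the current seed, which form a $\mathbb{Z}$-basis at every seed.

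The step I expect to be the main obstacle is the explicit homology computation underlying the second and third paragraphs: one must lift each neighbouring tree $\mathrm{T}_{p,q,r}$ to a $1$-chain, take the prescribed signed sum, and verify both that the junction $0$-chains cancel (so that the sum is genuinely a cycle) and that the resulting class decomposes as a leading $\gamma_{i,i+1}^{jk}$ term plus corrections confined to the adjacent pairs. Controlling these corrections \emph{uniformly in $m$}, rather than only in the small cases, is the delicate point, since a priori the trees near $z_{m-1}$ could couple the $\gamma_{m-1,m}$-directions back into $\Gamma(\varphi_{+})$ and spoil the triangularity. It is precisely here that the degeneration $D^{\pm}$ does the essential bookkeeping, isolating the contribution of the extremal zero and reducing the computation to one fewer zero.
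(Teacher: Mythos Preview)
Your strategy would likely succeed, but it is considerably more elaborate than the paper's argument, and the difficulty you single out as ``the main obstacle'' does not actually arise. The paper first treats zeros exactly on a line and simply writes out every spectral coordinate $X_{\mathrm{T}}$ explicitly, in eight cases according to the parities of $m$ and of the index $i$ of the adjacent pair $\{z_i,z_{i+1}\}$; these formulas are read off from the mutation relations already tabulated in the proof of Theorem~\ref{main_thm1}. The upshot of the explicit computation is that for each adjacent pair $\{z_i,z_{i+1}\}$ the two associated classes $[X_{\mathrm T_1}],[X_{\mathrm T_2}]$ lie \emph{entirely} in the rank-two sublattice $\mathbb{Z}\gamma_{i,i+1}^{12}\oplus\mathbb{Z}\gamma_{i,i+1}^{23}$: there are no correction terms supported on neighbouring pairs. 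Hence your matrix $C$ is block-\emph{diagonal} in $2\times 2$ blocks, not merely block-triangular, and the induction via the degenerations $D^{\pm}$ is unnecessary. Your $c$-vector heuristic is consistent with this, but the paper gets there by brute force rather than by invoking cluster-theoretic machinery.

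For the passage from ``on a line'' to ``almost on a line'' the paper takes a different direction from your degeneration in $m$: it does not recompute, but instead tracks what a single BPS wall-crossing (mutation at some $\mathrm{T}_2$ attached to $\{z_i,z_{i+1}\}$) does to the family $\{[X_{\mathrm{T}}]\}$ via the Gauss--Manin connection, obtaining the explicit transformation
\[
[X_{\mathrm{T}_1}]\mapsto [X_{\mathrm{T}_1}]+[X_{\mathrm{T}_2}],\qquad [X_{\mathrm{T}_2}]\mapsto -[X_{\mathrm{T}_2}],\qquad [X_{\mathrm{T}}]\mapsto [X_{\mathrm{T}}]\ \text{otherwise},
\]
which is visibly a $\mathbb{Z}$-automorphism of $\Gamma(\varphi_m)$. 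Since any network with zeros almost on a line is reached from one with zeros on a line by finitely many such crossings, the basis property persists. Your outline does not mention this wall-crossing step, and your inductive degeneration (varying the number of zeros) is orthogonal to the perturbation the statement actually requires (moving the zeros within a fixed degree $m$); this is the one place where your plan would need to be supplemented rather than merely simplified.
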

\begin{proof}
We start with the case when zeros of $\varphi_m(z)$ are on a line in $\mathbb C$. As before, we assume that zeros of $\varphi_m(z)$ are on the real line and the labels of marked points are given as $31,21,23,13, \dots$ in the range of $(0,2\pi)$, with the asymptotic direction $l_1$ lying between the first $31$ and $21$. Then, by the proof process of Theorem \ref{main_thm1}, the seed mutations with respect to each pair of compatible abelianization trees determined by pairs of adjacent zeros have been described in that proof, which enables us to express each spectral coordinate $X_{\mathrm T}$ explicitly in that scenario as follows.
\begin{enumerate}
\item when $m=2n\ge2$ and $i=1$,
\[
X_{\mathrm{T}_{n,n+1,n+3}}=\frac{A_{n,n+1,n+2}A_{n+1,n+3,n+4}}{A_{n,n+1,n+4}A_{n+1,n+2,n+3}},\quad
X_{\mathrm{T}_{n+1,n+3,n+4}}=\frac{A_{n,n+3,n+4}A_{n+1,n+2,n+3}}{A_{n,n+1,n+3}A_{n+2,n+3,n+4}};
\]
\item when $m=2n\ge2$ and $i=m-1$,
\[
X_{\mathrm{T}_{1,2,m+2}}=\frac{A_{1,2,3}A_{2,m+2,m+3}}{A_{2,3,m+2}A_{1,2,m+3}},\quad
X_{\mathrm{T}_{2,m+2,m+3}}=\frac{A_{2,m+1,m+2}A_{1,m+2,m+3}}{A_{1,2,m+2}A_{m+1,m+2,m+3}};
\]
\item when $m=2n\ge2$ and $2\le i=2k\le m-2$,
\[
X_{\mathrm{T}_{n+1-k,n+2+k,n+3+k}}=\frac{A_{n+1-k,n+2-k,n+3+k}A_{n+2+k,n+3+k,n+4+k}}{A_{n+1-k,n+3+k,n+4+k}A_{n+2-k,n+2+k,n+3+k}}, 
\]
\[
X_{\mathrm{T}_{n+1-k,n+2-k,n+3+k}}=\frac{A_{n+1-k,n+2-k,n+2+k}A_{n-k,n+1-k,n+3+k}}{A_{n+1-k,n+2+k,n+3+k}A_{n-k,n+1-k,n+2-k}};
\]
\item when $m=2n\ge2$ and $3\le i=2k-1\le m-2$,
\[
X_{\mathrm{T}_{n+1-k,n+2-k,n+2+k}}=\frac{A_{n+1-k,n+2-k,n+3-k}A_{n+2-k,n+2+k,n+3+k}}{A_{n+2-k,n+3-k,n+2+k}A_{n+1-k,n+2-k,n+3+k}}, 
\]
\[
X_{\mathrm{T}_{n+2-k,n+2+k,n+3+k}}=\frac{A_{n+1+k,n+2+k,n+3+k}A_{n+1-k,n+2-k,n+2+k}}{A_{n+2-k,n+1+k,n+2+k}A_{n+1-k,n+2+k,n+3+k}};
\]
\item when $m=2n+1\ge3$ and $i=1$,
\[
X_{\mathrm{T}_{n+1,n+3,n+4}}=\frac{A_{n+1,n+2,n+4}A_{n+3,n+4,n+5}}{A_{n+1,n+4,n+5}A_{n+2,n+3,n+4}},\quad
X_{\mathrm{T}_{n+1,n+2,n+4}}=\frac{A_{n,n+1,n+4}A_{n+1,n+2,n+3}}{A_{n,n+1,n+2}A_{n+1,n+3,n+4}};
\]
\item when $m=2n+1\ge3$ and $i=m-1$,
\[
X_{\mathrm{T}_{1,2,m+2}}=\frac{A_{1,2,3}A_{2,m+2,m+3}}{A_{1,2,m+3}A_{2,3,m+2}},\quad X_{\mathrm{T}_{2,m+2,m+3}}=\frac{A_{2,m+1,m+2}A_{1,m+2,m+3}}{A_{1,2,m+2}A_{m+1,m+2,m+3}};
\]
\item when $m=2n+1\ge3$ and $2\le i=2k\le m-2$,
\[
X_{\mathrm{T}_{n+1-k,n+2-k,n+3+k}}=\frac{A_{n+1-k,n+2-k,n+3-k}A_{n+2-k,n+3+k,n+4+k}}{A_{n+2-k,n+3-k,n+3+k}A_{n+1-k,n+2-k,n+4+k}}, 
\]
\[
X_{\mathrm{T}_{n+2-k,n+3+k,n+4+k}}=\frac{A_{n+1-k,n+3+k,n+4+k}A_{n+2-k,n+2+k,n+3+k}}{A_{n+1-k,n+2-k,n+3+k}A_{n+2+k,n+3+k,n+4+k}};
\]
\item when $m=2n+1\ge3$ and $3\le i=2k-1\le m-2$,
\[
X_{\mathrm{T}_{n+2-k,n+2+k,n+3+k}}=\frac{A_{n+2-k,n+3+k,n+4+k}A_{n+3-k,n+2+k,n+3+k}}{A_{n+2+k,n+3+k,n+4+k}A_{n+2-k,n+3-k,n+3+k}}, 
\]
\[
X_{\mathrm{T}_{n+2-k,n+3-k,n+3+k}}=\frac{A_{n+1-k,n+3+k,n+4+k}A_{n+2-k,n+2+k,n+3+k}}{A_{n+2+k,n+3+k,n+4+k}A_{n+1-k,n+2-k,n+3+k}};
\]
\end{enumerate}
Therefore, we obtain that for each pair of compatible abelianization trees $\mathrm{T}_1,\mathrm{T}_2$ determined by a pair of adjacent zeros $\{z_i,z_{i+1}\}$, the corresponding homology classes $[X_{\mathrm T_1}], [X_{\mathrm T_2}]$ are linearly independent and belong to the subgroup of $\Gamma(\varphi_m)$ generated by $\gamma_{i,i+1}^{12}, \gamma_{i,i+1}^{23}$. As a result, the set $\{[X_{\mathrm{T}}]:\mathrm{T}\in\mathcal{T}^{\circ}(\varphi_m,\vartheta)\}$ forms a basis in $\Gamma(\varphi_m)$.
Without loss of generality, assume that $\mathcal W(\varphi_m,\vartheta)$ is generic. Then, it is straightforward to derive that after crossing one additional BPS-ful locus associated with $\{z_i,z_{i+1}\}$ which determines compatible abelianization trees $\mathrm{T}_1,\mathrm{T}_2$, there will be an induced isomorphism on $\Gamma(\varphi_m)$, given by the Gauss-Manin connection
\begin{equation}
\begin{split}
\mathrm{GM}: \Gamma(\varphi_m)&\to\Gamma(\varphi_m)\\
[X_{\mathrm{T}}]&\mapsto
\begin{cases}
[X_{\mathrm{T}_1}]+[X_{\mathrm{T}_2}],\quad \mathrm{T}=\mathrm{T}_1;\\
-[X_{\mathrm{T}_2}],\quad \mathrm{T}=\mathrm{T}_2;\\
[X_{\mathrm{T}}],\quad \text{else}.\\
\end{cases}
\end{split}
\end{equation}
Here, we assume that $\mathrm{T}_2$ is the mutated vertex corresponding to this crossing. 
In fact, based on this observation, we can conclude that $\{[X_{\mathrm{T}}]: \mathrm{T}\in\mathcal{T}^{\circ}(\varphi_m,\vartheta)\}$ generates a basis in $\Gamma(\varphi_m)$ when zeros of $\varphi_m(z)$ are almost on a line in $\mathbb C$, since by definition, $\mathcal W(\varphi_m,\vartheta)$ can be obtained from another WKB spectral network $\mathcal W(\widetilde{\varphi},\vartheta_1)$, where zeros of $\vartheta_1$ are on the line, by crossing certain BPS-ful locus associated with a subset of adjacent zero pairs in $\mathrm{Zero}(\widetilde{\varphi})$.
\end{proof}

Based on Proposition \ref{generator}, we can now define the spectral coordinate with respect to any class $\gamma=\sum k_{\mathrm{T}}[X_{\mathrm T}]\in\Gamma(\varphi_m)$ from $\mathcal W(\varphi_m,\vartheta)$ as 
\begin{equation}
\label{spectral coordinate}
X_{\gamma}:=\prod_{\mathrm{T}\in\mathcal{T}^{\circ}(\varphi_m,\vartheta)}X_{\mathrm{T}}^{k_{\mathrm T}},
\end{equation}
which satisfies that $[X_{\gamma}]=\gamma$.

\begin{example}[Gr(3,5)]
\label{jumping example}
We continue with the Example \ref{eg:Gr(3,5)}. The associated spectral coordinates are defined as follows:
\[
X_{\mathrm{T}_{1,2,4}}=X_{\gamma^{21}}=\frac{A_{2,4,5}A_{1,2,3}}{A_{2,3,4}A_{1,2,5}},
\]
\[
X_{\mathrm{T}_{2,4,5}}=X_{\gamma^{13}}=\frac{A_{2,3,4}A_{1,4,5}}{A_{3,4,5}A_{1,2,4}}.
\]
The classes $[X_{\mathrm{T}_{1,2,4}}]=\gamma^{21}$ and $[X_{\mathrm{T}_{2,4,5}}]=\gamma^{13}$ have been illustrated in Figures \ref{class X1}, \ref{class X2}, respectively. If $\mathcal W(\varphi_2,\vartheta)$ crosses a BPS-ful locus by varying $\vartheta$ counterclockwise, we can obtain another pair of compatible abelianization trees
\[
\mathcal{T}^{\circ}(\varphi_2,\vartheta)=\{\mathrm{T}_{1,2,4},\mathrm{T}_{1,3,4}\},
\]
with corresponding spectral coordinates
\[
X_{\mathrm{T}_{1,2,4}}=X_{\gamma^{21}+\gamma^{13}}=\frac{A_{1,2,3}A_{1,4,5}}{A_{1,2,5}A_{1,3,4}},
\]
\[
X_{\mathrm{T}_{1,3,4}}=X_{-\gamma^{13}}=\frac{A_{1,2,4}A_{3,4,5}}{A_{1,4,5}A_{2,3,4}}.
\]
\begin{figure}
\centering
\includegraphics[width=0.3\textwidth]{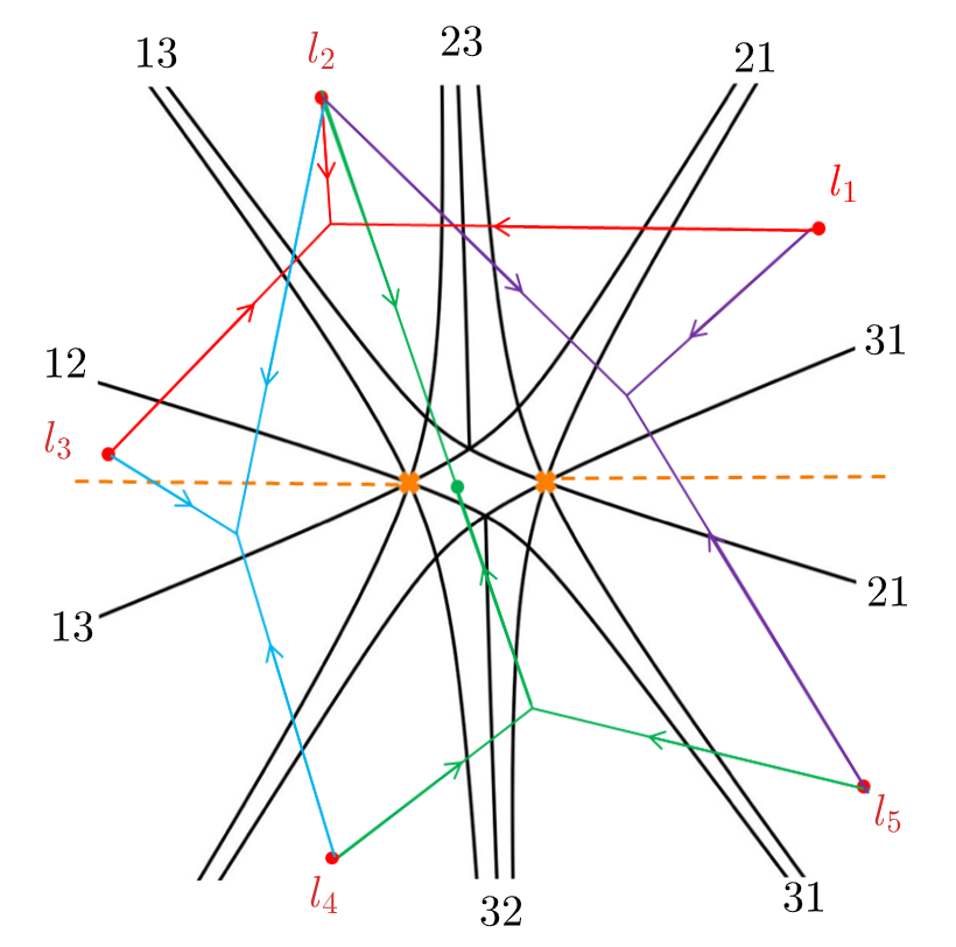}
\caption{The homology class $[X_{\mathrm{T}_{1,2,4}}]$.}
\label{class X1}
\end{figure}
\begin{figure}
\centering
\includegraphics[width=0.3\textwidth]{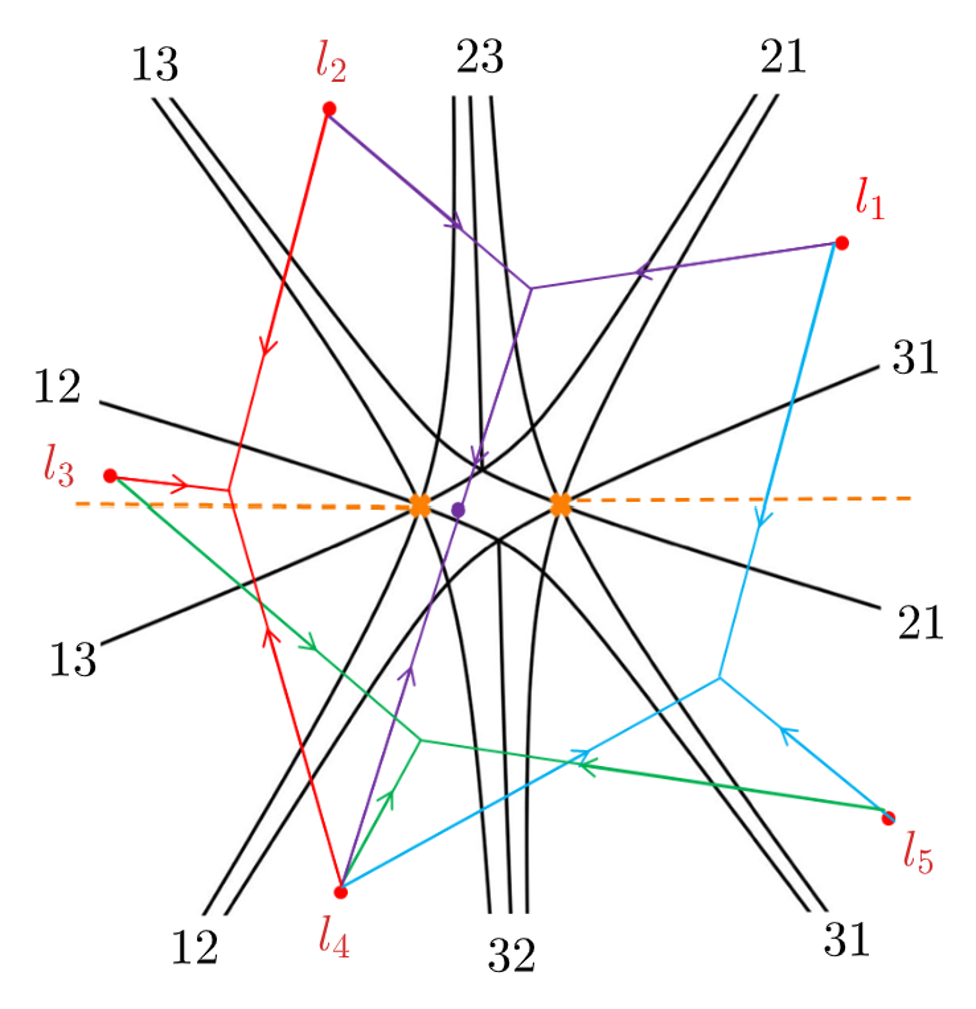}
\caption{The homology class $[X_{\mathrm{T}_{2,4,5}}]$.}
\label{class X2}
\end{figure}
\end{example}

\section{Riemann-Hilbert problems from WKB spectral networks}
\label{Riemann-Hilbert pro}
In this section, the Riemann-Hilbert problems from WKB spectral networks will be the foremost concern. We will review the definition of Riemann-Hilbert problems as outlined in \cite{B1,B2}, and introduce BPS structures associated to WKB spectral networks.

\subsection{BPS structures}
This concept axiomates the Donaldson-Thomas theory when it is applied to a $3$-Calabi-Yau triangulated category equipped with a stability condition.
\begin{definition}\cite[Definition 2.1]{B2}
A \emph{BPS structure} consists of 
\begin{enumerate}
\item a lattice $\Gamma$ known as the \emph{charge lattice} with a skew-symmetric bilinear form
\[
\langle-,-\rangle:\Gamma\times\Gamma\to\mathbb Z
\]
called the \emph{intersection form}.
\item a group homomorphism $Z:\Gamma\to\mathbb C$ referred to as the \emph{central charge}.
\item a collection of rational numbers $\Omega(\gamma) (\gamma\in\Gamma)$ called BPS invariants.
\end{enumerate}
These data must fulfill the following properties:
\begin{enumerate}
\item \emph{Symmetry property}: $\Omega(\gamma)=\Omega(-\gamma)$ for all $\gamma\in\Gamma$.
\item \emph{Support property}: Fix a norm $||\cdot||$ on the finite-dimensional vector space $\Gamma\otimes_{\mathbb Z}\mathbb R$. Then, a constant $C>0$ exists such that $|Z(\gamma)|>C\cdot||\gamma||$ if $\Omega(\gamma)\ne0$.
\end{enumerate}
\end{definition}
If the second requirement can be substituted by the following stronger condition, it is said that a BPS structure is finite:
\begin{enumerate}
\item there are only finitely many class $\gamma\in\Gamma$ such that $\Omega(\gamma)\ne0$.
\end{enumerate}

A finite BPS strcture $(\Gamma,Z,\Omega)$ is said to be non-degenerate if the form $\langle-,-\rangle$ is non-degenerate, and integral if $\Omega(\gamma)\in\mathbb Z\subset\mathbb Q$ for all $\gamma\in\Gamma$.


\subsection{The twisted torus}
Assume that we have a lattice $\Gamma\cong\mathbb Z^n$ with a skew form $\langle-,-\rangle$ as in the definition of a BPS structure. Then there is a associated algebraic torus
\[
\mathbb T_{+}=\mathrm{Hom}_{\mathbb Z}(\Gamma,\mathbb C^{\ast})\cong(\mathbb C^{\ast})^n,
\]
whose co-ordinate ring is denoted by 
\[
\mathbb C[\mathbb T_+]=\mathbb C[\Gamma]\cong\mathbb C[y_1^{\pm1},\dots,y_n^{\pm n}],
\]
where $y_{\gamma}\in\mathbb[\mathbb T_+]$ is the character of $\mathbb T_+$ that correponds to an element $\gamma\in\Gamma$. 

We introduce the concept of a related torsor
\[
\mathbb T_{-}=\{g:\Gamma\to\mathbb C^{\ast}:g(\gamma_1+\gamma_2)=(-1)^{\langle\gamma_1,\gamma_2\rangle}g(\gamma_1)g(\gamma_2)\}
\]
called the \emph{twisted torus}. 

The co-ordinate ring of $\mathbb T_-$ is spanned as a vector space by the functions 
\[
x_{\gamma}:\mathbb T_-\to\mathbb C^{\ast}, \quad x_{\gamma}(g)=g(\gamma)\in\mathbb C^{\ast},
\]
which are referred to as twisted characters. Thus,
\[
\mathbb C[\mathbb T_-]=\bigoplus_{\gamma\in\Gamma}\mathbb C\cdot x_{\gamma},\quad x_{\gamma_1}\cdot x_{\gamma_2}=(-1)^{\langle\gamma_1,\gamma_2\rangle}\cdot x_{\gamma_1+\gamma_2}.
\]

The torus $\mathbb T_+$ acts on the twisted torus $\mathbb T_{-}$ in a natural way, which is defined as
\[
(f\cdot g)(\gamma)=f(\gamma)g(\gamma)\in\mathbb C^{\ast}
\]
for $f\in\mathbb T_+$ and $g\in\mathbb T_-$, and this action is both free and transitive. Therefore, by selecting a base-point $g_0\in\mathbb T_-$, we obtain a bijection
\[
\vartheta_{g_0}:\mathbb T_+\to\mathbb T_-, \quad f\mapsto f\cdot g_0.
\]
It is often useful to choose a base-point in the finite subset 
\[
\{g:\Gamma\to\{\pm1\}:g(\gamma_1+\gamma_2)=(-1)^{\langle\gamma_1,\gamma_2\rangle}g(\gamma_1)\cdot g(\gamma_2)\}\subset\mathbb T_{-},
\]
and the points in this subset are known as quadratic refinements of the form $\langle-,-\rangle$.

In the context of a BPS structure, a class $\gamma\in\Gamma$ is conisdered active if its corresponding BPS invariant $\Omega(\gamma)$ is nonzero. A ray $\mathbb R_{>0}\cdot z\subset\mathbb C^{\ast}$ is called active if it contains a point of the form $Z(\gamma)$ with $\gamma\in\Gamma$ being an active class. For each ray $l=\mathbb R_{>0}\cdot z\subset\mathbb C^{\ast}$ in a finite and integral BPS structure, we define a birational automorphism of the twisted torus $\mathbb T_-$ using the formula
\begin{equation}
\label{birational auto}
\mathbb S(l)^{\ast}(x_{\beta})=x_{\beta}\cdot\prod_{Z(\gamma)\in l}(1-x_{\gamma})^{\Omega(\gamma)\langle\gamma,\beta\rangle},
\end{equation}
where the product is taken over all active classes $\gamma\in\Gamma$ such that $Z(\gamma)\in l$. The finiteness of the BPS structure ensures that this is a finite set.

\subsection{BPS structures associated to WKB spectral networks}
We will introduce the BPS structures associated with a family of WKB spectral networks, as described in \cite{N1}. For each point $a=(a_1,\dots,a_m)\in\mathbb C^m$, there is a corresponding family of WKB spectral networks $\{\mathcal W(\varphi_a=z^m+a_{1}z^{m-1}+\dots+a_{m-1}z+a_m,\vartheta);\vartheta\in\mathbb R\}$. We call a point $a\in\mathbb C^m$ generic if the corresponding family of WKB spectral networks are generic, as described in \Cref{generic}.
\begin{definition}
\label{BPS}
The BPS structure $(\Gamma_a,Z_a,\Omega_a)$ associated to a generic point $a\in\mathbb C^m$ is defined as follows:
\begin{enumerate}
\item the charge lattice is $\Gamma_a=H_1(\Sigma(\varphi_a),\mathbb Z)$ with its intersection form $\langle-,-\rangle$;
\item the central charge $Z_a:\Gamma_a\to\mathbb C$ is the map \eqref{central charge};
\item the BPS invariants $\Omega_a(\gamma):=\Omega(\varphi_a,\gamma)$ are either $0$ or $1$, as discussed in \Cref{BPS_counts}.
\end{enumerate}
\end{definition}

\begin{remark}
\begin{enumerate}
\item The BPS invariants have the symmetry property
\[
\Omega_a(\gamma)=\Omega_a(-\gamma)
\]
This is demonstrated by the fact that any web in $\mathcal W(\varphi_a,\vartheta)$ with charge $\gamma$ has a partner in $\mathcal W(\varphi_a,\vartheta+\pi)$ with charge $-\gamma$, since that $\mathcal W(\varphi_a,\vartheta)$ and $\mathcal W(\varphi_a,\vartheta+\pi)$ differ only by relabeling all trajectories $ij\to ji$.
\item Under the $\mathbb Z/3\mathbb Z$ action on $\Gamma_a$ induced by the cyclic permutation of sheets in $\Sigma(\varphi_a)$, the BPS invariants $\Omega_a(\gamma)$ are invariant. This $\mathbb Z/3\mathbb Z$ symmetry of the BPS invariants is reflected on $\mathcal W(\varphi_a,\vartheta)$ and $\mathcal W(\varphi_a,\vartheta+\frac{2\pi}{3})$, which differ only by cyclic permutation of the sheet labels.
\end{enumerate}
\end{remark}

\subsection{Defining the Riemann-Hilbert problem}
Consider a finite BPS structure $(\Gamma,Z,\Omega)$ and let $r$ be a ray in $\mathbb C^{\ast}$. We define the corresponding half-plane as
\begin{equation}
\label{half plane}
\mathbb H_r=\{\hbar\in\mathbb C^{\ast}:\hbar=z\cdot v\ \text{with}\ z\in r,\,\mathrm{Re}(v)>0\}\subset\mathbb C^{\ast}.
\end{equation}
In our analysis, we will work with meromorphic functions
\[
Y_r:\mathbb H_r\to\mathbb T_+
\]
which can be equivalently expressed as following functions by composing with the twisted characters of $\mathbb T_+$
\[
Y_{r,\gamma}:\mathbb H_r\to\mathbb C^{\ast},\quad Y_{r,\gamma}(t)=y_{\gamma}(Y_r(t)).
\]
The Riemann-Hilbert problem involves an additional choice of the constant term $\xi\in\mathbb T_{-}$.

\begin{problem}\cite[Problem 4.3]{B2}
\label{main problem}
We aim to find a meromorphic funtion $Y_r:\mathbb H_r\to \mathbb T_+$ for each non-active ray $r\in\mathbb C^{\ast}$, subject to the following conditions:
\begin{enumerate}
\item[(RH1)] given two non-active rays $r_{-},r_{+}$ forming the boundary rays of a convex sector $\triangle\subset\mathbb C^{\ast}$, taken in clockwise order, we require that
\[
Y_{r_2}(\hbar)=\mathbb S(\triangle)(Y_{r_1}(\hbar)),
\]
as meromorphic functions of $\hbar\in\mathbb H_{r_-}\cap\mathbb H_{r_+}$, where 
\begin{equation}
\label{ts}
\mathbb S(\triangle):=\prod_{l\in\triangle}\mathbb S(l).
\end{equation}
\item[(RH2)] for each non-active ray $r\subset\mathbb C^{\ast}$ and each class $\gamma\in\Gamma$, we have 
\[
\mathrm{exp}(Z(\gamma)/\hbar)\cdot Y_{r,\gamma}(\hbar)\to\xi(\gamma),
\]
as $\hbar\to0$ in the half-plane $\mathbb H_r$.
\item[(RH3)] for each class $\gamma$ and any non-active ray $r\subset\mathbb C^{\ast}$, there exists $k>0$ such that 
\[
|\hbar|^{-k}<|Y_{r,y}(\hbar)|<|\hbar|^k,
\]
for all $\hbar\in\mathbb H_r$ with $|\hbar|\gg 0$.
\end{enumerate}
\end{problem}


\subsection{Solving the Riemann-Hilbert problems}
\label{solution}
Solutions to the Riemann-Hilbert problems arising from quadratic differentials have been well studied in various references, including \cite{ A1, A2, A3, A4, BM}. Building on their ideas, we will construct solutions to the Riemann-Hilbert problems from WKB spectral networks $\mathcal W(\varphi_a,\vartheta)$ when zeros of $\varphi_a(z)$ are almost on a line. Our approach primarily relies on the spectral coordinates defined earlier and the asymptotic properties of solutions to certain differential equations discussed in \Cref{App:differential equation}.

\subsubsection{The third order ODE}
\label{sec4.1}
Let $\varphi_a(z)=z^m+a_1z^{m-1}+\cdots+a_{m-1}z+a_m$ be a polynomial in $\mathbb C$. Consider the following differential equation
\begin{equation}
\label{third order ODE}
\hbar^3y'''(z)+\varphi_a(z)y(z)=0,
\end{equation}
which is defined in a neighborhood of infinity, where $\hbar$ is a complex parameter. Let $\hbar$ be a positive real number. We define the Stokes sector $\mathcal S_k (k\in\mathbb Z)$ as 
\[
\mathcal S_k:=\left\{z\in\mathbb C:\,\left|\mathrm{arg}\,z-\frac{2\pi (k-1)}{m+3}\right|<\frac{\pi}{m+3} \right\}.
\]

Based on \Cref{mainthm4}, we can derive the following result.
\begin{proposition}
The differential equation \eqref{third order ODE} admits a subdominant solution $y_1$ in $\mathcal S_1$ whose asymptotic property is given by
\[
y_1(z,a; \hbar)\sim \frac{\hbar}{i\sqrt{3}}z^{r_m}\mathrm{exp}(-\frac{1}{\hbar}\frac{3}{m+3}z^{\frac{m+3}{3}}),\quad |z|\to\infty,
\]
where $r_m$ is a polynomial in $a=(a_1,\dots,a_m)$.
\end{proposition}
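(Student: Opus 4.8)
The plan is to obtain the stated asymptotic as a specialization of the global WKB analysis carried out in \Cref{App:differential equation}. Once \Cref{mainthm4} supplies, for each Stokes sector, a subdominant solution together with its asymptotic expansion, the proposition reduces to (i) selecting the branch that decays in $\mathcal S_1$ and (ii) reading off the leading exponential and the leading algebraic prefactor. So I would first set up the standard WKB ansatz $y = \exp\big(\tfrac1\hbar\int^z s(t)\,dt\big)$ with $s = s_0 + \hbar s_1 + \hbar^2 s_2 + \cdots$, substitute into \eqref{third order ODE}, and collect powers of $\hbar$ in the resulting relation $s^3 + 3\hbar s s' + \hbar^2 s'' + \varphi_a = 0$. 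The order-$\hbar^0$ term is the eikonal equation $s_0^3 = -\varphi_a$, whose three solutions $s_0 = \zeta\,\varphi_a^{1/3}$ with $\zeta^3 = -1$ correspond to the three sheets of $\Sigma(\varphi_a)$.

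The second step is to single out the branch subdominant in $\mathcal S_1$. Since $\mathcal S_1$ is centred on the positive real axis, where $\varphi_a(z)\sim z^m>0$, the decaying branch is the real negative cube root $s_0 = -\varphi_a^{1/3}$. Integrating its leading term, $\int^z (-t^{m/3})\,dt = -\tfrac{3}{m+3}z^{(m+3)/3} + \cdots$, reproduces exactly the exponential factor $\exp\big(-\tfrac1\hbar\tfrac{3}{m+3}z^{(m+3)/3}\big)$ appearing in the statement.

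The third step extracts the prefactor. The order-$\hbar^1$ (transport) equation reads $s_0 s_1 + s_0' = 0$, giving $s_1 = -(\log s_0)'$, so the amplitude is proportional to $s_0^{-1}\propto \varphi_a^{-1/3}$. Writing $\varphi_a^{1/3} = z^{m/3}\big(1 + a_1 z^{-1} + \cdots + a_m z^{-m}\big)^{1/3} = \sum_{k\ge0} P_k(a)\,z^{m/3-k}$, where each $P_k(a)$ is a universal polynomial in $a=(a_1,\dots,a_m)$, the coefficient of $z^{-1}$ in $s_0$ integrates to a term $c(a)\log z$ in the exponent; upon exponentiation this combines with the $z^{-m/3}$ coming from the amplitude to yield a single algebraic prefactor $z^{r_m}$, whose exponent $r_m$ is assembled from finitely many of the $P_k(a)$ and is therefore a polynomial in $a$. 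The normalizing constant $\tfrac{\hbar}{i\sqrt3}$ is fixed by the chosen normalization of $y_1$, the factor $i\sqrt3$ originating from the difference $e^{i\pi/3}-e^{-i\pi/3}=i\sqrt3$ of the two complex cube roots of $-1$, which controls the Wronskian of the three WKB branches.

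The main obstacle is the rigorous justification that this formal WKB series is genuinely asymptotic to an actual solution as $|z|\to\infty$ inside $\mathcal S_1$, rather than a purely formal construction. This is precisely the content imported from \Cref{mainthm4}, whose proof follows \cite{S1}: one recasts \eqref{third order ODE} as a Volterra-type integral equation for the discrepancy between $y_1$ and its leading WKB approximation, and shows by successive approximation that this error is uniformly controlled throughout the sector. Granting that machinery, the work remaining here is the bookkeeping of leading coefficients described above; the only genuinely delicate points are verifying that the subdominant branch is correctly matched to $\mathcal S_1$ and that the accumulated power of $z$ collapses into the single exponent $r_m$.
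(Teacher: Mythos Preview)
Your proposal is correct and aligns with the paper's approach: the paper itself offers no proof beyond the sentence ``Based on \Cref{mainthm4}, we can derive the following result,'' so any derivation that reads off the subdominant branch and leading asymptotic from the appendix theorem is the intended argument. Your formal WKB computation (eikonal plus transport) is a helpful unpacking of what \Cref{mainthm4} delivers, and you correctly identify that the rigorous passage from formal to actual asymptotics is exactly what is borrowed from the appendix; the only minor point is that your account of how the power $z^{r_m}$ arises (from the $z^{-1}$ coefficient of $s_0$ together with the amplitude $s_0^{-1}$) is a little schematic, since in general higher WKB corrections also contribute to the residue, but the polynomial dependence of $r_m$ on $a$ is in any case guaranteed directly by \Cref{mainthm4}.
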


Note that \eqref{third order ODE} is invariant under the Symanzik-Sibuya rotation:
\[
z\to\omega_1^{-1}z,\quad a_j\to\omega_1^{-j}a_j,\, j=1,\dots,m,
\]
where $\omega_1:=e^{2\pi i/(m+3)}$. Then the rotated subdominant solutions $y_k$ in $\mathcal S_k$ can then be expressed as
\[
y_k(z,a;\hbar):=y_1(\omega_1^{-k+1}z,\{\omega_1^{(-k+1)j}a_j\};\hbar).
\]

The Wronskian of solutions $y_i,y_j,y_k$ is defined as 
\[
W[y_i,y_j,y_k]:=\mathrm{det}
\begin{bmatrix}
y_{i}&y_{j}&y_{k}\\
y'_{i}&y'_{j}&y'_{k}\\
y''_{i}&y''_{j}&y''_{k}\\
\end{bmatrix},
\]
which is independent of $z$ since 
\[
\frac{dW[y_i,y_j,y_k]}{dz}=0.
\]
The set of solutions $\{y_k,y_{k+1},y_{k+2}\}$ forms a basis for the solutions of \eqref{third order ODE} since the Wronskian of these solutions is nonzero.

\subsubsection{Solutions to the Riemann-Hilbert problem}
Let $S^m\subset\mathbb C^m$ be an open submanifold that contains points $a\in\mathbb C^m$ satisfying
\begin{enumerate}
\item zeros of $\varphi_a(z)$ are almost on a line in $\mathbb C$;
\item $\mathcal W(\varphi_a,\vartheta)$ is generic, for $\vartheta\in\mathbb R$.
\end{enumerate}
It is worth noting that $S^2=\{(a,b)\in\mathbb C^2,a\ne b\}$. For any point $a\in S^m$, there exists a quadratic refinement $g$ belonging to the twisted torus $\mathbb T_{a,-}$, defined by setting 
\begin{equation}
\label{quad refinement}
g(\gamma^{12}_{i,i+1})=g(\gamma^{23}_{i,i+1})=-1,\quad i=1,2,\dots,m+2,
\end{equation}
which is unique with the property that $g(\gamma)=-1$ for every active class $\gamma\in\Gamma_a$. Therefore, for any $\gamma=\sum_{i=1}^{m-1}(k_i\gamma^{12}_{i,i+1}+h_i\gamma^{23}_{i,i+1})\in\Gamma_a$, the associated quadratic refinement is given by 
\begin{equation}
g(\gamma)=g\left(\sum_{i=1}^{m-1}(k_i\gamma^{12}_{i,i+1}+h_i\gamma^{23}_{i,i+1})\right)=\prod_{i}(-1)^{(k_ih_i+k_i+h_i)}.
\end{equation}
Note that the well-definedness of the quadratic refinement $g$ can be checked by some simple calculations. We can then use the quadratic refinement $g$ to identify the twisted torus $\mathbb T_{a,-}$ with the standard torus $\mathbb T_{a}=\mathbb T_{a,+}$. Under this identification, we can view the birational automorphism \eqref{birational auto} as a birational automorphism of $\mathbb T_{a}$, which is defined by 
\begin{equation}
\label{refinement}
\mathbb S(l)^{\ast}(y_{\beta})=y_{\beta}\cdot\prod_{Z(\gamma)\in l}(1+y_{\gamma})^{\Omega(\gamma)\langle\gamma,\beta\rangle}.
\end{equation}
Then, to solve the Riemann-Hilbert problem \ref{main problem}, we need to choose a constant term $\xi\in\mathbb T_a$ and construct meromorphic maps
\[
Y_r:\mathbb H_r\to\mathbb T_a
\]
for all non-active rays $r=e^{i\vartheta_0}\cdot\mathbb R_{>0}\subset\mathbb C^{\ast}$, where $\mathbb H_r$ is the half-plane defined in \eqref{half plane}.

We now consider a generic WKB spectral network $\mathcal W(\varphi_a,\vartheta)$ for some $a\in S^m$ and let $\{l_1,l_2,\dots,l_{m+3}\}$ be the set of chosen asymptotic directions. Consider \eqref{third order ODE} with $\hbar=1$. Based on the discussion in \Cref{sec4.1}, we are able to obtain $m+3$ distinct solutions $y_1,\dots,y_{m+3}$ with the property that for each $k\in\{1,\dots,m+3\}$, $y_k$ is subdominant along the asymptotic direction $l_{k}$. Let $\mathcal V(3,n)$ be the subspace of the Grassmannian manifold $Gr(3,n)$ defined as:
\begin{equation}
\label{Vspace}
\mathcal V(3,n):=\{(v_1,v_2,\dots,v_{n})\in Gr(3,n)\ |\
\mathrm{det}(v_i,v_{i+1},v_{i+2})\ne0, i\in\mathbb Z/n\mathbb Z\}.
\end{equation}
Note that the space $\mathcal V(3,n)$ has a natural action of the group $\mathbb Z/n\mathbb Z$, given by
\[
i\cdot (v_1,\dots,v_{n}):=(v_{1+i},\dots,v_{n+i}),
\]
where $i\in\mathbb Z/n\mathbb Z$. 

As a result, we can asscociate an element 
\begin{equation}
\label{M}
M(\varphi_a,\vartheta):=
\begin{bmatrix}
y_1&y_2&\cdots&y_{m+3}\\
y_1'&y'_2&\cdots&y_{m+3}'\\
y_1''&y''_2&\cdots&y''_{m+3}\\
\end{bmatrix}
\in\mathcal V(3,m+3).
\end{equation}
with the spectral network $\mathcal W(\varphi_a,\vartheta)$. 

By attaching each column vector $v_{i}$ from the point $v\in\mathcal V(3,m+3)$ to the asymptotic direction $l_{i}$, we obtain a meromorphic function
\begin{equation}
\begin{split}
\mathcal X_{a,\vartheta}:\mathcal V(3,m+3)&\to\mathbb T_a\\
v&\mapsto (X_{a,\vartheta}(v):\gamma\mapsto X_{a,\vartheta,\gamma}(v)),
\end{split}
\end{equation}
where $X_{a,\vartheta,\gamma}(v)$ is the spectral coordinate with respect to $\gamma\in\Gamma_a$ for the spectral network $\mathcal W(\varphi_a,\vartheta)$ as defined in \eqref{spectral coordinate}. Note that using the Gauss-Manin connection, the map $X_{a,\vartheta,\gamma}$ makes senses for $\gamma\in\Gamma_a$. Due to the invariance property of the spectral coordinates, we can descend $\mathcal X_{a,\vartheta}$ to the following quotient space
\[
V(3,m+3):=\mathcal V(3,m+3)/\mathrm{PGL(3,\mathbb C)}.
\]

Let $r=e^{i\vartheta_0}\cdot\mathbb R_{>0}\subset\mathbb C$ be a non-active ray corresponding to the BPS structure $(\Gamma_a,Z_a,\Omega_a)$ for the WKB spectral network $\mathcal W(\varphi_a,\vartheta)$. Note that the ray $r$ is assumed to be non-active is equivalent to the claim that $\mathcal W(\lambda^{-3}\varphi_a,0)$ is BPS-free for any $\lambda\in r$.

Based on the previous discussion, we can derive a holomorphic map
\begin{equation}
\label{F}
F_r: \mathbb H_r\to V(3,m+3), \quad \hbar\mapsto \overline{M(\hbar^{-3}\varphi_a,0)}.
\end{equation}

Define 
\begin{equation}
\label{X}
\mathcal X_r:=\mathcal X_{a,\vartheta_0}: V(3,m+3)\to\mathbb T_a.
\end{equation}
Then we compose $F_r$ with $\mathcal X_r$ to obtain the required meromorphic function
\begin{equation}
\label{Y}
Y_r:=\mathcal X_r\circ F_r:\mathbb H_r\to\mathbb T_a.
\end{equation}

We now proceed to investigate the conditions (RH1)-(RH3) of Problem \ref{main problem} on $S^m$.

\subsubsection{Jumping}
\label{sec.4.3}
We begin by considering the jumping condition (RH1). Take a point $a\in S^m$ and let $l=e^{i\vartheta}\cdot\mathbb R\subset\mathbb C^{\ast}$ be an active ray for the corresponding BPS structure $(\Gamma_{a},Z_{a},\Omega_{a})$. Consider non-active rays $r_{-}=e^{i\vartheta_{-}}\cdot\mathbb R_{>0}$ and $r_{+}=e^{i\vartheta_{+}}\cdot\mathbb R_{>0}$ which are small anti-clockwise and clockwise deformations of the ray $l$. 

\begin{proposition}
\label{transformation}
The two systems of meromorphic functions are related by 
\[
Y_{r_+}=\mathbb S(l)\circ Y_{r_-}.
\]
\end{proposition}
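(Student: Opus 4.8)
The plan is to separate the two factors of $Y_r=\mathcal X_r\circ F_r$ and to show that only the spectral-coordinate factor jumps across the active ray. First I would observe that the map $F_r$ of \eqref{F} is given by the \emph{ray-independent} formula $\hbar\mapsto\overline{M(\hbar^{-3}\varphi_a,0)}$; its subscript records only the domain $\mathbb H_r$. Hence $F_{r_-}$ and $F_{r_+}$ restrict to the same holomorphic map on the overlap $\mathbb H_{r_-}\cap\mathbb H_{r_+}$, and writing $F$ for this common map we get $Y_{r_\pm}=\mathcal X_{a,\vartheta_\pm}\circ F$ there. Thus the identity $Y_{r_+}=\mathbb S(l)\circ Y_{r_-}$ reduces to the purely cluster-theoretic statement
\[
\mathcal X_{a,\vartheta_+}=\mathbb S(l)\circ\mathcal X_{a,\vartheta_-}
\]
as birational maps $V(3,m+3)\to\mathbb T_a$, which in terms of twisted characters reads $X_{a,\vartheta_+,\beta}=X_{a,\vartheta_-,\beta}\cdot(1+X_{a,\vartheta_-,\gamma})^{\langle\gamma,\beta\rangle}$ for every $\beta\in\Gamma_a$, by \eqref{refinement}.

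Next I would identify the wall-crossing with a single seed mutation. Since $a\in S^m$ is generic (\Cref{generic}), \Cref{dependence} shows that the active ray $l$ carries the central charge of a single primitive active class $\gamma$ with $Z_a(\gamma)\in l$, associated to one adjacent pair $\{z_i,z_{i+1}\}$, and $\Omega_a(\gamma)=1$. By \Cref{main_thm1}, passing from $\vartheta_-$ to $\vartheta_+$ across $l$ is exactly the crossing of the BPS-ful locus of that pair, i.e.\ a single seed mutation at the tree $\mathrm T_2$ whose class is the mutated one, $[X_{\mathrm T_2}]=\gamma$; every other element of $\mathcal T^\circ$ is left unchanged. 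Because $X_{a,\vartheta,\beta}$ is multiplicative in $\beta$ by \eqref{spectral coordinate} and the conjectured factor $(1+X_{a,\vartheta_-,\gamma})^{\langle\gamma,\beta\rangle}$ is likewise a homomorphism in $\beta$, it suffices to check the displayed identity on a basis of $\Gamma_a$, for which I would take the basis $\{[X_{\mathrm T}]\}$ attached to $\mathcal T^\circ(\varphi_a,\vartheta_-)$ transported by the Gauss--Manin connection of \Cref{generator}.

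The core computation then proceeds vertex by vertex. For the mutated class itself one has $X_{a,\vartheta_+,\gamma}=X_{a,\vartheta_-,\gamma}$, because the inversion $X_{\mathrm T_2}\mapsto X_{\mathrm T_2}^{-1}$ of the $\mathcal X$-mutation is compensated by the relabeling $\gamma\mapsto-\gamma$ of \Cref{generator}; this agrees with \eqref{refinement} since $\langle\gamma,\gamma\rangle=0$. For a class $\beta$ whose coordinate involves a neighbour of $\mathrm T_2$, I would substitute the new $A$-invariants produced by the mutation and collect terms using the corresponding exchange relation of \Cref{ex3}; the Plücker-type numerator factors as $1+X_{a,\vartheta_-,\gamma}$ raised to the power dictated by the number of arrows between the two vertices. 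The model case $m=2$ of \Cref{jumping example} exhibits the mechanism cleanly: with $\gamma=\gamma^{13}=[\mathrm T_{2,4,5}]$ one finds
\[
\frac{X_{a,\vartheta_+,\gamma^{21}}}{X_{a,\vartheta_-,\gamma^{21}}}
=\frac{A_{2,4,5}A_{1,3,4}}{A_{1,2,4}A_{3,4,5}}
=\frac{A_{3,4,5}A_{1,2,4}+A_{1,4,5}A_{2,3,4}}{A_{1,2,4}A_{3,4,5}}
=1+X_{a,\vartheta_-,\gamma},
\]
using the exchange relation $A_{2,4,5}A_{1,3,4}=A_{3,4,5}A_{1,2,4}+A_{1,4,5}A_{2,3,4}$ of \Cref{eg:Gr(3,5)}, which matches \eqref{refinement} with $\langle\gamma^{13},\gamma^{21}\rangle=1$. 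The general cases are identical single-mutation computations, one for each of the eight families appearing in \Cref{main_thm1}.

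The main obstacle is the bookkeeping of signs and orientation. Two things must be pinned down: that the exchange matrix of $Q(\varphi_a,\vartheta)$ coincides with the skew intersection form $\langle-,-\rangle$ on the classes $[X_{\mathrm T}]$, so that the exponent of the jump is exactly $\langle\gamma,\beta\rangle$; and that the clockwise/anti-clockwise convention for $r_\pm$ relative to $l$ produces the power that makes $(1+X_{a,\vartheta_-,\gamma})$ appear rather than its inverse. Both are settled by the explicit Gauss--Manin formula of \Cref{generator} together with the orientation fixed in \Cref{jumping example}; once these are in place the verification is the routine exchange-relation manipulation illustrated above, and identifying the resulting transformation with \eqref{refinement} (with $\Omega_a(\gamma)=1$) completes the proof.
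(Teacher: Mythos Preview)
Your proposal is correct and follows essentially the same approach as the paper: reduce to the spectral-coordinate factor by observing that $F_{r_\pm}$ agree on the overlap, then verify $\mathcal X_{r_+}=\mathbb S(l)\circ\mathcal X_{r_-}$ on a basis of $\Gamma_a$ using the Pl\"ucker/exchange relations. The paper carries out this verification explicitly for each of the eight families of adjacent-zero pairs listed in the proof of \Cref{main_thm1}, whereas you package the same computation as a single $\mathcal X$-cluster mutation and illustrate it with the $m=2$ case; the underlying content is identical.
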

\begin{proof}
We begin by proving that two systems of meromorphic functions $\mathcal X_{r_{\pm}}$ are related by 
\[
\mathcal X_{r_+}=\mathbb S(l)\circ \mathcal X_{r_-}.
\]
We consider the case where zeros of $\varphi_a(z)$ are almost on the real line and the labels of marked points are given as $31,21,23,13, \dots$ in the range of $(0,2\pi)$, with the asymptotic direction $l_1$ lying between the first $31$ and $21$. The other cases can be checked in a similar manner. Note that the intersection form on $\Gamma_a$ can be determined by $\langle\gamma_{i,i+1}^{13},\gamma^{21}_{i,i+1}\rangle=1$, $i=1,\dots,m-1$. There are eight cases that we need to consider, determined by a pair of adjacent zeros $\{z_i,z_{i+1}\}$ where the crossing from $\mathcal W(\varphi_a,\vartheta_+)$ to $\mathcal W(\varphi_a,\vartheta_-)$ happens. Base on the proof process of \Cref{generator}, we can have the following calculations:
\begin{enumerate}
\item when $m=2n\ge2$ and $i=1$,
\begin{equation*}
\begin{split}
\mathcal X^{\ast}_{r_-}(y_{\gamma^{21}_{1,2}}(1+y_{\gamma^{13}_{1,2}}))=&\frac{A_{n,n+3,n+4}A_{n,n+1,n+2}}{A_{n,n+2,n+3}A_{n,n+1,n+4}}\left(1+\frac{A_{n,n+1,n+3}A_{n+2,n+3,n+4}}{A_{n,n+3,n+4}A_{n+1,n+2,n+3}}\right)\\
=&\frac{A_{n,n+1,n+2}A_{n+1,n+3,n+4}}{A_{n,n+1,n+4}A_{n+1,n+2,n+3}]}\\
=&\mathcal X^{\ast}_{r_+}(y_{\gamma^{23}_{1,2}})
\end{split}
\end{equation*}
\begin{equation*}
\begin{split}
\mathcal X^{\ast}_{r_-}(y_{\gamma^{13}_{1,2}})&=\frac{A_{n,n+3,n+4}A_{n+1,n+2,n+3}}{A_{n,n+1,n+3}A_{n+2,n+3,n+4}}=\mathcal X^{\ast}_{r_+}(y_{\gamma^{13}_{1,2}})
\end{split}
\end{equation*}
\item when $m=2n\ge2$ and $i=m-1$,
\begin{equation*}
\begin{split}
\mathcal X^{\ast}_{r_-}(y_{\gamma^{21}_{m-1,m}}(1+y_{\gamma^{13}_{m-1,m}}))=&\frac{A_{1,2,3}A_{2,m+1,m+2}A_{1,m+2,m+3}}{A_{1,m+1,m+2}A_{2,3,m+2}A_{1,2,m+3}}\cdot\left(1+\frac{A_{1,2,m+2}A_{m+1,m+2,m+3}}{A_{2,m+1,m+2}A_{1,m+2,m+3}}\right)\\
=&\frac{A_{1,2,3}A_{2,m+2,m+3}}{A_{2,3,m+2}A_{1,2,m+3}}\\
=&\mathcal X^{\ast}_{r_+}(y_{\gamma^{23}_{m-1,m}})
\end{split}
\end{equation*}
\begin{equation*}
\begin{split}
\mathcal X^{\ast}_{r_-}(y_{\gamma^{13}_{m-1,m}})&=\frac{A_{2,m+1,m+2}A_{1,m+2,m+3}}{A_{1,2,m+2}A_{m+1,m+2,m+3}}=\mathcal X^{\ast}_{r_+}(y_{\gamma^{13}_{m-1,m}})
\end{split}
\end{equation*}
\item when $m=2n\ge2$ and $2\le i=2k\le m-2$,
\begin{equation*}
\begin{split}
\mathcal X^{\ast}_{r_-}(y_{\gamma^{21}_{i,i+1}}(1+y_{\gamma^{13}_{i,i+1}}))=&\frac{A_{n+1-k,n+2-k,n+2+k}A_{n-k,n+1-k,n+3+k}A_{n+2+k,n+3+k,n+4+k}}{A_{n-k,n+1-k,n+2+k}A_{n+1-k,n+3+k,n+4+k}A_{n+2-k,2,n+2+k,n+3+k}}\\
&\cdot\left(1+\frac{A_{n+1-k,n+2+k,n+3+k}A_{n-k,n+1-k,n+2-k}}{A_{n+1-k,n+2-k,n+2+k}A_{n-k,n+1-k,n+3+k}}\right)\\
=&\frac{A_{n+1-k,n+2-k,n+3+k}A_{n+2+k,n+3+k,n+4+k}}{A_{n+1-k,n+3+k,n+4+k}A_{n+2-k,n+2+k,n+3+k}}\\
=&\mathcal X^{\ast}_{r_+}(y_{\gamma^{23}_{i,i+1}})
\end{split}
\end{equation*}
\begin{equation*}
\begin{split}
\mathcal X^{\ast}_{r_-}(y_{\gamma^{13}_{i,i+1}})&=\frac{A_{n+1-k,n+2-k,n+2+k}A_{n-k,n+1-k,n+3+k}}{A_{n+1-k,n+2+k,n+3+k}A_{n-k,n+1-k,n+2-k}}=\mathcal X^{\ast}_{r_+}(y_{\gamma^{13}_{i,i+1}})
\end{split}
\end{equation*}
\item when $m=2n\ge2$ and $3\le i=2k-1\le m-2$,
\begin{equation*}
\begin{split}
\mathcal X^{\ast}_{r_-}(y_{\gamma^{21}_{i,i+1}}(1+y_{\gamma^{13}_{i,i+1}}))&=\frac{A_{n+1+k,n+2+k,n+3+k}A_{n+1-k,n+2-k,n+2+k}A_{n+1-k,n+2-k,n+3-k}}{A_{n+1-k,n+1+k,n+2+k}A_{n+2-k,n+3-k,n+2+k}A_{n+1-k,2,n+2-k,n+3+k}}\\
&\cdot\left(1+\frac{A_{n+2-k,n+1+k,n+2+k}A_{n+1-k,n+2+k,n+3+k}}{A_{n+1+k,n+2+k,n+3+k}A_{n+1-k,n+2-k,n+2+k}}\right)\\
&=\frac{A_{n+1-k,n+2-k,n+3-k}A_{n+2-k,n+2+k,n+3+k}}{A_{n+2-k,n+3-k,n+2+k}A_{n+1-k,n+2-k,n+3+k}}\\
&=\mathcal X^{\ast}_{r_+}(y_{\gamma^{23}_{i,i+1}})
\end{split}
\end{equation*}
\begin{equation*}
\begin{split}
\mathcal X^{\ast}_{r_-}(y_{\gamma^{13}_{i,i+1}})&=\frac{A_{n+1+k,n+2+k,n+3+k}A_{n+1-k,n+2-k,n+2+k}}{A_{n+2-k,n+1+k,n+2+k}A_{n+1-k,n+2+k,n+3+k}}=\mathcal X^{\ast}_{r_+}(y_{\gamma^{13}_{i,i+1}})
\end{split}
\end{equation*}
\item when $m=2n+1\ge3$ and $i=1$,
\begin{equation*}
\begin{split}
\mathcal X^{\ast}_{r_-}(y_{\gamma^{21}_{1,2}}(1+y_{\gamma^{13}_{1,2}}))=&\frac{A_{n,n+1,n+4}A_{n+1,n+2,n+3}A_{n+3,n+4,n+5}}{A_{n,n+1,n+3}A_{n+1,n+4,n+5}A_{n+2,n+3,n+4}}\left(1+\frac{A_{n,n+1,n+2}A_{n+1,n+3,n+4}}{A_{n,n+1,n+4}A_{n+1,n+2,n+3}}\right)\\
=&\frac{A_{n+1,n+2,n+4}A_{n+3,n+4,n+5}}{A_{n+1,n+4,n+5}A_{n+2,n+3,n+4}}\\
=&\mathcal X^{\ast}_{r_+}(y_{\gamma^{23}_{1,2}})
\end{split}
\end{equation*}
\begin{equation*}
\begin{split}
\mathcal X^{\ast}_{r_-}(y_{\gamma^{13}_{1,2}})&=\frac{A_{n,n+1,n+4}A_{n+1,n+2,n+3}}{A_{n,n+1,n+2}A_{n+1,n+3,n+4}}=\mathcal X^{\ast}_{r_+}(y_{\gamma^{13}_{1,2}})
\end{split}
\end{equation*}
\item when $m=2n+1\ge3$ and $i=m-1$,
\begin{equation*}
\begin{split}
\mathcal X^{\ast}_{r_-}(y_{\gamma^{21}_{m-1,m}}(1+y_{\gamma^{13}_{m-1,m}}))=&\frac{A_{1,2,3}A_{2,m+1,m+2}A_{1,m+2,m+3}}{A_{1,2,m+3}A_{2,3,m+2}A_{1,m+1,m+2}}\left(1+\frac{A_{1,2,m+2}A_{m+1,m+2,m+3}}{A_{2,m+1,m+2}A_{1,m+2,m+3}}\right)\\
=&\frac{A_{1,2,3}A_{2,m+2,m+3}}{A_{1,2,m+3}A_{2,3,m+2}}\\
=&\mathcal X^{\ast}_{r_+}(y_{\gamma^{23}_{m-1,m}})
\end{split}
\end{equation*}
\begin{equation*}
\begin{split}
\mathcal X^{\ast}_{r_-}(y_{\gamma^{13}_{m-1,m}})&=\frac{A_{2,m+1,m+2}A_{1,m+2,m+3}}{A_{1,2,m+2}A_{m+1,m+2,m+3}}=\mathcal X^{\ast}_{r_+}(y_{\gamma^{13}_{m-1,m}})
\end{split}
\end{equation*}
\item when $m=2n+1\ge3$ and $2\le i=2k\le m-2$,
\begin{equation*}
\begin{split}
\mathcal X^{\ast}_{r_-}(y_{\gamma^{21}_{i,i+1}}(1+y_{\gamma^{13}_{i,i+1}}))&=\frac{A_{n+1-k,n+3+k,n+4+k}A_{n+2-k,n+2+k,n+3+k}A_{n+1-k,n+2-k,n+3-k}}{A_{n+1-k,n+2+k,n+3+k}A_{n+2-k,n+3-k,n+3+k}A_{n+1-k,2,n+2-k,n+4+k}}\\
&\cdot\left(1+\frac{A_{n+1-k,n+2-k,n+3-k}A_{n+2+k,n+3+k,n+4+k}}{A_{n+1-k,n+3+k,n+4+k}A_{n+2-k,n+2+k,n+3+k}}\right)\\
&=\frac{A_{n+1-k,n+2-k,n+3-k}A_{n+2-k,n+3+k,n+4+k}}{A_{n+2-k,n+3-k,n+3+k}A_{n+1-k,n+2-k,n+4+k}}\\
&=\mathcal X^{\ast}_{r_+}(y_{\gamma^{23}_{i,i+1}})
\end{split}
\end{equation*}
\begin{equation*}
\begin{split}
\mathcal X^{\ast}_{r_-}(y_{\gamma^{13}_{i,i+1}})&=\frac{A_{n+1-k,n+3+k,n+4+k}A_{n+1-k,n+2+k,n+3+k}}{A_{n+1-k,n+2-k,n+3+k}A_{n+2+k,n+3+k,n+4+k}}=\mathcal X^{\ast}_{r_+}(y_{\gamma^{13}_{i,i+1}})
\end{split}
\end{equation*}
\item when $m=2n+1\ge3$ and $3\le i=2k-1\le m-2$,
\begin{equation*}
\begin{split}
\mathcal X^{\ast}_{r_-}(y_{\gamma^{21}_{i,i+1}}(1+y_{\gamma^{13}_{i,i+1}}))&=\frac{A_{n+1-k,n+3+k,n+4+k}A_{n+2-k,n+2+k,n+3+k}A_{n+3-k,n+2+k,n+3+k}}{A_{n+1-k,n+2+k,n+3+k}A_{n+2+k,n+3+k,n+4+k}A_{n+2-k,2,n+3-k,n+3+k}}\\
&\cdot\left(1+\frac{A_{n+2+k,n+3+k,n+4+k}A_{n+1-k,n+2-k,n+3+k}}{A_{n+1-k,n+3+k,n+4+k}A_{n+2-k,n+2+k,n+3+k}}\right)\\
&=\frac{A_{n+2-k,n+3+k,n+4+k}A_{n+3-k,n+2+k,n+3+k}}{A_{n+2+k,n+3+k,n+4+k}A_{n+2-k,n+3-k,n+3+k}}\\
&=\mathcal X^{\ast}_{r_+}(y_{\gamma^{23}_{i,i+1}})
\end{split}
\end{equation*}
\begin{equation*}
\begin{split}
\mathcal X^{\ast}_{r_-}(y_{\gamma^{13}_{i,i+1}})&=\frac{A_{n+1-k,n+3+k,n+4+k}A_{n+2-k,n+2+k,n+3+k}}{A_{n+2+k,n+3+k,n+4+k}A_{n+1-k,n+2-k,n+3+k}}=\mathcal X^{\ast}_{r_+}(y_{\gamma^{13}_{i,i+1}})
\end{split}
\end{equation*}
\end{enumerate}
By definition, the corresponding wall-crossing automorphism $\mathbb S(l)$ satisfies
\[
\mathbb S(l)^{\ast}(y_{\gamma^{13}_{i,i+1}})=y_{\gamma^{13}_{i,i+1}},\quad \mathbb S(l)^{\ast}(y_{\gamma^{21}_{i,i+1}})=y_{\gamma^{21}_{i,i+1}}\cdot(1+y_{\gamma^{13}_{i,i+1}}),
\]
and we can conclude that $X_{r_+}^{\ast}=X_{r_-}^{\ast}\circ\mathbb S(l)^{\ast}$ as required.
Therefore, for any $\hbar\in\mathbb H_{r_-}\cap \mathbb H_{r_+}$, we have
\[
\mathcal X_{r_+}(F_{r_+}(\hbar))=\mathbb S(l)\circ\mathcal X_{r_-}(F_{r_-}(\hbar)),
\]
i.e.,
\[
Y_{r_+}=\mathbb S(l)\circ Y_{r_-}.
\]
\end{proof}

In fact, \Cref{transformation} still holds for non-generic WKB spectral networks when zeros of polynomials are almost on a line in $\mathbb C$, similar to the statement mentioned in \cite[Proposition 7.1]{BM}. Let $\mathcal W(\varphi_a,\vartheta)$ be a non-generic WKB spectral network and $l=e^{i\vartheta}\cdot\mathbb R\subset\mathbb C^{\ast}$ be an active ray. We can deform the point $a\in\mathbb C^{m}$ to a nearby generic point $a'$. Under this deformation, the ray splits into finite rays since there are only finite active generators in this case, but for $a'$ close enough to $a$, these rays $l_i$ will be contained in the sector bounded by the non-active rays $r_{\pm}$. Then, the result for the non-generic point $a$ now follows by applying the same result for the generic point $a'$ to each of the rays $l_i$ and take the composition of the automorphisms $\mathbb S_{a'}(l_i)$.

\subsubsection{Behavior as $\hbar\to0$}
\label{sec.4.4}
To verify condition (RH2) of Problem \ref{main problem}, we need to show that the map 
\[
Y_r:\mathbb H_r\to \mathbb T_{a}\quad \hbar\mapsto\mathcal X_r(F_r(\hbar))
\]
has the correct asymptotics as $\hbar\to0$. Specifically, we want to show that
\[
Y_{r,\gamma}(\hbar)\sim\mathrm{exp}(-Z_a(\gamma)/\hbar)\cdot\xi(\gamma)
\]
for any $\gamma\in\Gamma_a$ as $\hbar\to0$ in the half-plane $\mathbb H_r$ for the constant term $\xi(\gamma)=1$. 

We consider the action of $\mathbb C^{\ast}$ on the space $\{\hbar^{-3}\varphi_a(z):\hbar\in\mathbb C,a\in S^m\}$ which rescales $a=\{a_1,\dots,a_m\}\in S^m$ with weights $(3,6,\dots,3(m-1), 3m)$, $\hbar$ with weight $m+3$, and $z$ with weight 3, i.e., 
\begin{equation*}
\begin{split}
c\cdot \left(\hbar^{-3}\varphi_a(z)\right)&=c\cdot \left(\hbar^{-3}(z^m+a_1z^{m-1}+\cdots+a_{m-1}z+a_m)\right)\\
&=(c^{m+3}\hbar)^{-3}(z^m+c^3a_1(c^3z)^{m-1}+\cdots+c^{3m-3}a_{m-1}(c^3z)+c^{3m}a_m).
\end{split}
\end{equation*}
for some $c\in\mathbb C^{\ast}$. Then, the $\mathbb C^{\ast}$-action on \eqref{sec4.1} can be naturally given by
\[
y'''(c\cdot z)+(c\cdot\left(\hbar^{-3}\varphi_a(z)\right)y(c\cdot z)=0,
\]
for some $c\in\mathbb C^{\ast}$. Note that the differential equation $y'''(z)+\hbar^{-3}\varphi_a(z)y(z)=0$ is unchanged under this $\mathbb C^{\ast}$ rescaling. Therefore, by applying this $\mathbb C^{\ast}$-action, we can assume, for the sake of simplication, that the ray $r$ is the positive real axis and that the WKB spectral network $\mathcal W(\varphi_a,0)$ is, hence, BPS-free. 

In the asymptotic region, the solution $y_k$ of \eqref{third order ODE} discussed in \Cref{sec4.1} is 
\begin{equation}
\label{asymptotic solution}
y_k\simeq\mathrm{exp}\left[-\frac{\delta_k}{\hbar}\int^z_{\infty\,l_{k}}x_{a_k}dz\right],
\end{equation}
where $a_k\in\{1,2,3\}$ is the sheet's labels of $\Sigma(\varphi_a)$, $\delta_k\in\{1,e^{\pm2\pi i/3}\}$ ensures the asymptotic behaviors that $y_k$ is subdominant in the sector containing $l_{k}$, and $\infty\,l_{k}$ represents the infinity along the direction $l_{k}$. When we substitute \eqref{asymptotic solution} into the Wronskian $W[y_i,y_j,y_k]$, one obtains that the leading term is in $1/\hbar$. Since the endpoints of the integration paths of the tree solutions are the same, $W[y_i,y_j,y_k]$ is independent of the final point which allows us to deform the contour associated with $Y_{r,\gamma}(\hbar)$. Based on the discussion in \cite[Section 3.2]{IKS}, the integration path of $Y_{r,\gamma}(\hbar)$ can be naturally identified with the class $\gamma\in\Gamma_a$. At the leading order in $\hbar^{-1}$ of the WKB approximation, we obtain the required asymptotic behavior:
\[
Y_{r,\gamma}(\hbar)\sim\mathrm{exp}(-Z_a(\gamma)/\hbar), \quad\hbar\to0.
\]

Based on the discussion above, we conclude with the following result:
\begin{proposition}
\label{hto0}
For each non-active ray $r\subset\mathbb C^{\ast}$ and each class $\gamma\in\Gamma_a$, where $a\in S^m$, the meromorphic function $Y_{r,\gamma}:\mathbb H_r\to\mathbb C^{\ast}$ satisfies
\[
\mathrm{exp}(Z_a(\gamma)/\hbar)\cdot Y_{r,\gamma}(\hbar)\to1,
\]
as $\hbar\to0$ in the half-plane $\mathbb H_r$.
\end{proposition}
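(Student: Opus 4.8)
The plan is to reduce the whole statement to the WKB asymptotics of the subdominant solutions $y_k$ and then to track how the resulting exponential factors assemble into the period $Z_a(\gamma)$ through the product structure defining $X_\gamma$. First I would invoke the $\mathbb C^{\ast}$-rescaling described above to normalize $r$ to the positive real axis; this is harmless because the equation $y'''(z)+\hbar^{-3}\varphi_a(z)y(z)=0$ is invariant under the Symanzik--Sibuya action, and it lets me assume $\mathcal W(\varphi_a,0)$ is BPS-free, so that all the spectral coordinates are defined along $r$ and all the invariants $A_{p,q,r}$ make sense.

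Next I would recall from \Cref{sec4.1} the leading WKB form \eqref{asymptotic solution}, $y_k\simeq\exp[-\tfrac{\delta_k}{\hbar}\int^{z}_{\infty\,l_k}x_{a_k}\,dz]$, valid as $\hbar\to 0$ in $\mathbb H_r$ with $z$ in the Stokes region attached to $l_k$. Substituting this (together with its $z$-derivatives, which at leading order simply pull down factors of $-\tfrac{\delta_k}{\hbar}x_{a_k}$) into the Wronskian defining each $\mathrm{SL}(V)$ invariant $A_{p,q,r}=W[y_p,y_q,y_r]$, the matrix factors columnwise into $y_k$ times a Vandermonde-type block, so the determinant's leading behaviour is governed by a sum of three period-type integrals with an explicit algebraic prefactor. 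The crucial bookkeeping is that, because the three contours in $A_{p,q,r}$ share endpoints at the common junction (the $y_k$ are ``tree solutions''), the Wronskian is independent of the common upper limit, so the contours may be deformed and only the homology data survives.

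Then I would form $X_{\gamma}=\prod_{\mathrm T}X_{\mathrm T}^{k_{\mathrm T}}$ from \eqref{spectral coordinate} and substitute these asymptotics. The ratios $X_{\mathrm T}$ are engineered, via \eqref{spectral coordinate0} and \Cref{spectral coordinate1}, precisely so that the homology class of the combination of contours appearing in the exponent equals $[X_{\mathrm T}]$; by \Cref{generator} these classes form a basis, and the identification of the integration path with $\gamma$ in the sense of \cite[Section 3.2]{IKS} shows that the total exponential factor is $\exp(-\tfrac1\hbar\oint_\gamma x\,dz)=\exp(-Z_a(\gamma)/\hbar)$. Simultaneously the algebraic prefactors coming from the Vandermonde blocks should cancel between numerator and denominator, since each mutable vertex $\mathrm T$ has equally many incoming and outgoing arrows, leaving the constant $1$; this is exactly the asymptotic $Y_{r,\gamma}(\hbar)\sim\exp(-Z_a(\gamma)/\hbar)$ required.

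The main obstacle I expect is twofold. First, making the Wronskian asymptotics rigorous requires uniform control of \eqref{asymptotic solution} together with its derivatives across the relevant Stokes sectors, and a verification that exactly one term in the determinant expansion dominates; degenerate configurations in which two exponents have equal real part must be excluded, which is where genericity of $\mathcal W(\varphi_a,\vartheta)$ and the non-activeness of $r$ enter. Second, and more delicately, one must check that the \emph{subleading} (non-exponential) factors genuinely cancel to give the limit $1$ rather than some $\hbar$-independent constant; this amounts to matching the normalization of $y_1$ fixed in \Cref{sec4.1} against the incoming/outgoing arrow balance at each $\mathrm T$, and it is the step where the balanced Postnikov structure from \Cref{Postnikov diagram} is essential.
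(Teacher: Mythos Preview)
Your proposal is correct and follows essentially the same route as the paper: normalize $r$ to the positive real axis via the $\mathbb C^\ast$-rescaling, insert the WKB leading form \eqref{asymptotic solution} into the Wronskians $A_{p,q,r}$, use that the Wronskian is independent of the common endpoint to deform contours, and invoke \cite[Section~3.2]{IKS} to identify the resulting integration path with $\gamma$. If anything, you are more explicit than the paper about the Vandermonde structure of the leading determinant and about why the algebraic prefactors cancel (the in/out arrow balance at each mutable vertex), points the paper leaves implicit.
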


\begin{remark}
According to \cite{IKS}, it is predicted that for any $\gamma\in\Gamma_a$, the logarithm of $\mathrm{log}\,Y_{r,\gamma}(\hbar)$ can be expressed as $\hbar^{-1}\Pi_{\gamma}(\hbar)$. Here, 
$\Pi_{\gamma}(\hbar)=\sum^{\infty}_{n=0}\hbar^n\Pi^{(n)}_{\gamma}$ represents the WKB period of \eqref{third order ODE} with respect to $\gamma$, as defined in \cite{IKS}. It is worth noting that $\Pi^{(0)}_{\gamma}=Z_a(\gamma)$. This prediction has been tested through numerical calculations for certain cases in \cite{IKKS,IKS}.
\end{remark}


\subsubsection{Behavior as $\hbar\to\infty$}
\label{sec.4.5}
The final step is to verify the condition (RH3). In fact we will establish an even stronger result: for a fixed point $a\in S^m$, the point $F_r(\hbar)$ converges to a well-defined limit point, meaning that any three distinct columns from the limit point are linearly independent. To demonstrate this, we will utilize the $\mathbb C^{\ast}$-action on $\hbar^{-3}\varphi_a(z)$ introduced earlier.
\begin{proposition}
\label{htoinfty}
For any point $a\in S^m$, $F_r(\hbar)\in V(3,m+3)$ has a well-defined limit as $\hbar\to\infty$ in a fixed half-plane. This limit is independent of the choice of $a\in S^m$ and is a fixed point of the $\mathbb Z/(m+3)\mathbb Z$-action.
\end{proposition}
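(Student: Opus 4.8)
The plan is to exploit the $\mathbb C^{\ast}$-action on $\hbar^{-3}\varphi_a(z)$ introduced in \Cref{sec.4.4} in order to trade the limit $\hbar\to\infty$ for the limit in which all coefficients $a_j$ degenerate to $0$, and then to identify the resulting limit point with the projective class of the solution matrix of the pure equation $y'''+z^m y=0$. As in the treatment of (RH2), I would first use this action to reduce to the case where $r$ is the positive real axis, so that no rotation of the Stokes sectors occurs along the degeneration.

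First I would perform the change of variable $z=\hbar^{3/(m+3)}w$, choosing a continuous branch of the root on the simply connected half-plane $\mathbb H_r$. This turns the equation $y'''+\hbar^{-3}\varphi_a(z)y=0$ into
\[
\frac{d^3y}{dw^3}+\Big(w^m+\sum_{j=1}^{m}a_j\,\hbar^{-3j/(m+3)}\,w^{m-j}\Big)y=0 .
\]
Since the substitution multiplies the three rows of $M(\hbar^{-3}\varphi_a,0)$ by $1,\hbar^{-3/(m+3)},\hbar^{-6/(m+3)}$ respectively, the new Wronskian matrix differs from $M(\hbar^{-3}\varphi_a,0)$ by a diagonal element of $\mathrm{GL}(3,\mathbb C)$ and hence represents the same point of $V(3,m+3)=\mathcal V(3,m+3)/\mathrm{PGL}(3,\mathbb C)$. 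Thus $F_r(\hbar)$ equals the class of the solution matrix of the rescaled equation, whose coefficients $a_j\hbar^{-3j/(m+3)}$ all tend to $0$ as $|\hbar|\to\infty$ in $\mathbb H_r$. Because the Stokes sectors $\mathcal S_k$ and the asymptotic directions $l_k$ depend only on the degree $m$, the pattern of subdominance and the labelling of the columns are preserved along this degeneration.

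Next I would invoke the asymptotic analysis of \Cref{App:differential equation} (\Cref{mainthm4}). The subdominant solutions admit asymptotic expansions that are uniform as the coefficient vector ranges over a compact set; since the coefficients $a_j\hbar^{-3j/(m+3)}$ remain bounded and converge to $0$, the suitably normalized subdominant solutions, together with their first two derivatives, converge to those of $y'''+w^m y=0$. Consequently $F_r(\hbar)\to\overline{M(z^m,0)}$. It then remains to observe that the limit is a genuine point of $V(3,m+3)$: for the pure equation the Wronskian of any three consecutive subdominant solutions is nonzero by Sibuya's theory, so consecutive triples of columns stay linearly independent in the limit, i.e.\ $M(z^m,0)\in\mathcal V(3,m+3)$. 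Independence of the limit from $a$ is then immediate, as $\overline{M(z^m,0)}$ does not involve $a$. For the fixed-point claim I would use that $\varphi=z^m$ is invariant under the Symanzik--Sibuya rotation $z\to\omega_1^{-1}z$, $a_j\to\omega_1^{-j}a_j$ (all $a_j=0$). This rotation sends $y_k$ to $y_{k+1}$, hence cyclically permutes the columns of $M(z^m,0)$, while the accompanying change of variable rescales the rows of $M$ by powers of $\omega_1$ and therefore acts trivially in $V(3,m+3)$; thus $\overline{M(z^m,0)}$ is invariant under the generator of the $\mathbb Z/(m+3)\mathbb Z$-action $i\cdot(v_1,\dots,v_n)=(v_{1+i},\dots,v_{n+i})$.

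The \emph{hard part} will be the convergence step: one must show that the subdominant solutions, which are characterized only up to scalar by their decay in $\mathcal S_k$, can be normalized so that they and their derivatives depend continuously on the coefficients all the way down to the degenerate point $a=0$, and that no consecutive triple of columns degenerates in the limit. This is precisely what the uniform asymptotic estimates of \Cref{App:differential equation} are designed to supply, while the remaining bookkeeping (the branch choice for $\hbar^{3/(m+3)}$ and the absence of column permutations along the deformation, the latter being harmless in any case since the limit is a fixed point of the cyclic action) is routine.
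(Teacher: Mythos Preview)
Your proposal is correct and follows the same route as the paper: use the $\mathbb C^{\ast}$-rescaling to replace $\hbar\to\infty$ by $a\to 0$, identify the limit with the class of the solution matrix of $y'''+z^m y=0$, and deduce the cyclic fixed-point property from the Symanzik--Sibuya rotation.

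There is one substantive difference worth flagging. For the non-degeneracy of the limit you invoke only that \emph{consecutive} Wronskians $W[y_k,y_{k+1},y_{k+2}]$ are nonzero, which indeed suffices for membership in $\mathcal V(3,m+3)$. The paper instead evaluates the Wronskian at $z=0$ using the rotation relations $y_k(0,0)=y_0(0,0)$, $y_k'(0,0)=\omega_1^{-k}y_0'(0,0)$, $y_k''(0,0)=\omega_1^{-2k}y_0''(0,0)$, obtaining a Vandermonde factor $(\omega_1^{-i}-\omega_1^{-j})(\omega_1^{-j}-\omega_1^{-k})(\omega_1^{-k}-\omega_1^{-i})$ and hence $W[y_i,y_j,y_k]\ne 0$ for \emph{all} distinct $i,j,k$. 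This is precisely the stronger statement announced in the paragraph preceding the Proposition, and it is what is actually needed downstream for (RH3): the spectral coordinates $X_{\mathrm T}$ are ratios of minors $A_{p,q,r}$ for triples $(p,q,r)$ that are typically non-consecutive, so one wants every such minor to be nonzero at the limit point in order to conclude that $Y_{r,\gamma}(\hbar)$ has a finite nonzero limit. Your argument establishes the Proposition as literally stated, but you should be aware that the paper's Vandermonde computation buys the extra non-degeneracy needed for the application.
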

\begin{proof}
As we discussed earlier, \eqref{third order ODE} is unchanged under the $\mathbb C^{\ast}$ rescaling. Therefore,
\[
M(\hbar^{-3}\varphi_a(z),0)=M(c\cdot (\hbar^{-3}\varphi_a(z)),0).
\]
Take $c^{m+3}\hbar=1$ and send $\hbar\to\infty$ in a fixed half-plane, it follows that 
\[
c\cdot (\hbar^{-3}\varphi_a(z))\to \varphi_0(z)=z^m.
\]
We claim that $F_r(\hbar)$ with respect to the polynomial $\varphi_0(z)$ converges to a well-defined limit in $V(3,m+3)$. Note that for any integer $0\le k\le m+2$,
\[
y_{0}(0,0;\hbar)=y_{k}(0,0;\hbar),
\]
\[
y'_{0}(0,0;\hbar)=\omega_1^{k}y'_{k}(0,0;\hbar),
\]
\[
y''_{0}(0,0;\hbar)=\omega_1^{2k}y''_{k}(0,0;\hbar).
\]
Since the Wronskian $W[y_i,y_j,y_k]$ is independent of $z$,
\begin{equation}
\begin{gathered}
W[y_i,y_j,y_k]=\mathrm{det}\begin{bmatrix}
y_{i}(0,0;\hbar)&y_{j}(0,0;\hbar)&y_{k}(0,0;\hbar)\\
y'_{i}(0,0;\hbar)&y'_{j}(0,0;\hbar)&y'_{k}(0,0;\hbar)\\
y''_{i}(0,0;\hbar)&y''_{j}(0,0;\hbar)&y''_{k}(0,0;\hbar)\\
\end{bmatrix}\\
=\mathrm{det}\begin{bmatrix}
y_{0}(0,0;\hbar)&y_{0}(0,0;\hbar)&y_{0}(0,0;\hbar)\\
\omega_1^{-i}y'_{0}(0,0;\hbar)&\omega_1^{-j}y'_{0}(0,0;\hbar)&\omega_1^{-k}y'_{0}(0,0;\hbar)\\
\omega_1^{-2i}y''_{m}(0,0;\hbar)&\omega_1^{-2j}y''_{m}(0,0;\hbar)&\omega_1^{-2k}y''_{0}(0,0;\hbar)\\
\end{bmatrix}\\
=y_0(0,0;\hbar)y'_0(0,0;\hbar)y''_0(0,0;\hbar)(\omega_1^{-i}-\omega_1^{-j})(\omega^{-j}-\omega_1^{-k})(\omega_1^{-k}-\omega_1^{-i})
\end{gathered}
\end{equation}
Since 
\[
W[y_1,y_2,y_3]=y_m(0,0;\hbar)y'_m(0,0;\hbar)y''_m(0,0;\hbar)(\omega_1^{-1}-\omega_1^{-2})(\omega_1^{-2}-\omega_1^{-3})(\omega_1^{-2}-\omega_1^{-1})\ne0,
\]
we obtain $W[y_i,y_j,y_k]\ne0$. Therefore, $F_r(\hbar)$ converges to a well-defined limit point in $V(3,m+3)$, which is independent of the choice of $a\in S^m$.

Note that the above $\mathbb C^{\ast}$-action induces an action of the $(m+3)$-th roots of unity $\mu_{m+3}\subset\mathbb C^{\ast}$, which leaves $\hbar$ invariant. In particular, $\omega_1=e^{{2\pi i}/{m+3}}\in\mu_{m+3}$. Since this action rescales $z$ by an element of $\mu_{m+3}$, it follows that this limit is a fixed point of the $\mathbb Z/(m+3)\mathbb Z$-action.
\end{proof}


\section{Polynomial cubic differentials and stability conditions}
\label{cubicstab}
In this section, we will utilize the cluster strutures and spectral coordinates derived from rank 3 WKB spectral networks $\mathcal W(\varphi,\vartheta)$ when zeros of $\varphi(z)$ are almost on a line in $\mathbb C$, as introduced in \Cref{sec3cluster}. By adopting the approach outlined in \cite{BS} and tailoring it to our case, we aim to establish a holomorphic embedding between the spaces of framed cubic differentials, associated with these WKB spectral networks, and stability conditions.

\subsection{The spaces of polynomial cubic differentials}
In this section, we will primarily introduce some spaces related to cubic differentials. We define a \emph{polynomial cubic differential} (cf. \cite{DM}) as a holomorphic differential on $\mathbb C$ in the form of $\varphi(z)dz^3$, where $\varphi(z)$ is a polynomial function. Let $\mathcal C(m)\simeq\mathbb C^{\ast}\times\mathbb C^m$ represent the vector space of polynomial cubic differentials of degree $m$ with nonzero leading coefficient. Unless specified otherwise, we always assume that $m\ge2$. Differentials $\varphi_1(z)dz^3$ and $\varphi_2(z)dz^3$ are considered equivalent if there exists an automorphism $f\in\mathrm{Aut}(\mathbb C)$ such that $f^{\ast}(\varphi_2(z)dz^3)=\varphi_{1}(z)dz^3$. Denote by $\mathcal {MC}(m)$ the space of equivalence classes of polynomial cubic differentials in $\mathcal C(m)$. According to \cite[Proposition 3.1]{DM}, the space $\mathcal{MC}(m)$, as a complex orbifold or a complex algebraic variety, can be described as the quotient of $\mathbb C^{m-1}$ by the action of $\mu_{m+3}=\mathbb Z/(m+3)\mathbb Z$, where the action is given by 
\[
(a_{m-2},a_{m-3},\cdots,a_0)\mapsto(\xi^{m+1}a_{m-2},\xi^ma_{m-3},\dots,\xi^3a_0)
\]
for any $\xi\in\mu_{m+3}$. 

The action of $\mathbb C$ on $\mathcal{MC}(m)$ is defined by the pullback of the standard $\mathbb C^{\ast}$ action, which rescales the cubic differentials, via the map $\mathbb C\to\mathbb C^{\ast}$ defined by $t\mapsto e^{-3\pi it}$.

\begin{definition}
\label{def.5.1}
Let $\Gamma$ be a free abelian group of rank $2m-2$. A $\Gamma$-framing of $\varphi(z)dz^3\in\mathcal{MC}(m)$ is an isomorphism of abelian groups
\[
\theta:\Gamma\to\Gamma(\varphi).
\] 
We denote by $\mathcal{MC}(m)^{\Gamma}$ the space of pairs $(\varphi(z)dz^3,\theta)$ consisting of a cubic differential $\varphi(z)dz^3\in\mathcal{MC}(m)$ and a $\Gamma$-framing $\theta$.
\end{definition}
This provides us with an unbranched cover
\[
\mathcal{MC}(m)^{\Gamma}\to\mathcal{MC}(m).
\]
For any $(\varphi(z)dz^3,\theta)\in\mathcal{MC}(m)^{\Gamma}$, the composition of the framing $\theta$ and the period map $Z_{\varphi}:\Gamma(\varphi)\to\mathbb C$ gives a group homomorphism $Z\circ\theta:\Gamma\to\mathbb C$. Thus, we obtain the map
\[
\pi_1:\mathcal{MC}(m)^{\Gamma}\to\mathrm{Hom}_{\mathbb Z}(\Gamma,\mathbb C),
\]
defined by $\pi_1(\varphi(z)dz^3,\theta):=Z_{\varphi}\circ\theta$. We refer to this map $\pi_1$ the \emph{extended period map}. Based on the observation in \Cref{dependence} and using a similar approach as in \cite[Theorem 4.12]{BS}, we can conclude the following:
\begin{proposition}
\label{local injection}
The space $\mathcal{MC}(m)^{\Gamma}$ is a complex manifold of dimension $m-1$ and the extended period map 
\[
\pi_1: \mathcal{MC}(m)^{\Gamma}\to\mathrm{Hom}_{\mathbb Z}(\Gamma,\mathbb C)
\]
is a local injection of complex manifolds.
\end{proposition}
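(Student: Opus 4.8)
The plan is to follow the strategy of \cite[Theorem 4.12]{BS}, adapted from the quadratic to the cubic setting, and to treat the two assertions separately. For the manifold structure I would start from $\mathcal C(m)\cong\mathbb C^{\ast}\times\mathbb C^m$, a complex manifold of dimension $m+1$, carrying the action of the $2$-dimensional group $\mathrm{Aut}(\mathbb C)=\{z\mapsto az+b\}$. The quotient $\mathcal{MC}(m)$ then has dimension $m-1$ and is, by \cite[Proposition 3.1]{DM}, the orbifold $\mathbb C^{m-1}/\mu_{m+3}$. Since $\mathcal{MC}(m)^{\Gamma}\to\mathcal{MC}(m)$ is an unbranched cover, I would argue that the residual isotropy at the orbifold points acts nontrivially on $\Gamma(\varphi)=H_1(\Sigma(\varphi),\mathbb Z)$ through its monodromy, hence freely on the set of $\Gamma$-framings; this trivialises the isotropy and shows that $\mathcal{MC}(m)^{\Gamma}$ is a genuine complex manifold of dimension $m-1$.

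For the extended period map I would first check that $\pi_1$ is well defined and holomorphic: the Liouville form $x\,dz$ is canonically attached to $\Sigma(\varphi)$ and is preserved under the isomorphisms induced by $\mathrm{Aut}(\mathbb C)$, so $Z_{\varphi}\circ\theta$ descends to $\mathcal{MC}(m)^{\Gamma}$, and the periods $\oint_{\gamma}x\,dz$ depend holomorphically on the coefficients of $\varphi$. It then suffices to prove that the derivative $d\pi_1$ is injective at every point, since $\dim\mathcal{MC}(m)^{\Gamma}=m-1<2m-2=\operatorname{rank}\Gamma$ rules out a local isomorphism but is compatible with an immersion. Identifying the tangent space to $\mathcal{MC}(m)^{\Gamma}$ with the space of polynomials $\psi$ of degree $\le m$ modulo the infinitesimal automorphism directions $\langle\varphi',\,z\varphi'+3\varphi\rangle$ (which is $(m-1)$-dimensional), the derivative is
\[
d\pi_1(\psi)(\gamma)=\tfrac13\oint_{\gamma}\psi(z)\,x^{-2}\,dz,\qquad \gamma\in\Gamma(\varphi),
\]
where $x=\varphi^{1/3}$ is the tautological function on $\Sigma(\varphi)$; the ramification of $\Sigma(\varphi)$ over the zeros of $\varphi$ cancels the factor $x^{-2}$, so $\psi\,x^{-2}\,dz$ is a meromorphic form with poles only at the punctures.

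The heart of the argument, and the step I expect to be the main obstacle, is the injectivity of this map. Suppose $d\pi_1(\psi)=0$, i.e.\ all periods of $\eta:=\psi\,x^{-2}\,dz$ over $\Gamma=H_1(\Sigma(\varphi),\mathbb Z)$ vanish. Because $\Gamma$ contains the loops around the punctures, the residues of $\eta$ also vanish, so $\eta=dh$ for a meromorphic function $h$ on $\Sigma(\varphi)$. Writing $h=h_0+h_1x+h_2x^2$ with $h_0,h_1,h_2\in\mathbb C(z)$ and using $3x^2\,dx=\varphi'\,dz$ to expand $dh$ in the basis $\{1,x,x^2\}$, I would match components: vanishing of the $x^0$- and $x^2$-parts of $\eta$ gives $h_0$ constant and, since $\varphi$ is not a perfect cube, $h_2=0$, while the $x^1$-part yields the first-order linear equation $3\varphi h_1'+\varphi' h_1=3\psi$. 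A local pole-order analysis, separately at the zeros of $\varphi$ (where the coefficient $1-3k$ never vanishes) and at any other putative pole of $h_1$ (where the left-hand side would acquire a pole of order one higher than on the right), shows that $h_1$ has no poles, hence is a polynomial; comparing leading terms in $3\varphi h_1'+\varphi'h_1=3\psi$ and using $\deg\psi\le m$ forces $\deg h_1\le1$. Thus $\psi\in\langle\varphi',\,z\varphi'+3\varphi\rangle$, i.e.\ $[\psi]=0$ in the tangent space, and $d\pi_1$ is injective.

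The relations recorded in \Cref{dependence} then confirm that $\pi_1$ is an immersion onto the expected half-dimensional locus rather than onto an open subset of $\mathrm{Hom}_{\mathbb Z}(\Gamma,\mathbb C)$. The only delicate points in the whole scheme are the passage from ``zero periods'' to ``exact with a meromorphic primitive'' (handled by the vanishing of residues coming from the puncture loops) and the pole-exclusion for $h_1$; both become routine once the decomposition of $h$ in $\mathbb C(z)[x]/(x^3-\varphi)$ is in place.
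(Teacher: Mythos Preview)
Your proposal is correct and is precisely the adaptation of \cite[Theorem 4.12]{BS} that the paper invokes; the paper itself gives no details beyond citing that reference together with \Cref{dependence}, so you have supplied exactly the argument it omits. One minor simplification: since $\eta=\psi x^{-2}\,dz$ is a regular algebraic $1$-form on the smooth affine curve $\Sigma(\varphi)$, the primitive $h$ can be taken directly in the coordinate ring $\mathbb C[z]\oplus\mathbb C[z]x\oplus\mathbb C[z]x^2$, so the pole-exclusion step for $h_1$ is not strictly needed and only the degree comparison remains.
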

Indeed, there is a subgroup $\Gamma_1\subset\Gamma$ of rank $m-1$ such that $\pi_1:\mathcal{MC}(m)^{\Gamma}\to\mathrm{Hom}(\Gamma_1,\mathbb C)$ is a local isomorphism. 

We introduce the open subspace $\mathcal{MC}(m)_{1}$ of $\mathcal{MC}(m)$ defined by
\[
\mathcal{MC}(m)_{1}:=\{\varphi(z)dz^3\in\mathcal{MC}(m): \text{zeros of }\, \varphi(z) \text{ are simple and almost on a line}\}.
\] 
Let us assume a fixed point $\phi=(\varphi(z)dz^3,\theta)\in\mathcal{MC}(m)^{\Gamma}$ and denote by $\mathcal{MC}(m)_{\ast}^{\Gamma}$ $(\mathcal{MC}(m)_{\ast})$ the connected component in $\mathcal{MC}(m)^{\Gamma}$ $(\mathcal{MC}(m))$ containing $\phi$ $(\varphi(z)dz^3)$. Note that for any two points $\phi_i=(\varphi_i(z)dz^3,\theta_i)\in\mathcal{MC}(m)_{\ast}^{\Gamma}, i=1,2$, there is continuous path $p:[0,1]\to\mathcal{MC}(m)_{\ast}$ which connects $\phi_1$ and $\phi_2$ such that the following diagram 
\begin{equation}
\xymatrix @C=5mm{
&\Gamma \ar[ld]_{\theta_1} \ar[rd]^{\theta_2}&\\
\Gamma(\varphi_1) \ar[rr]^{\mathrm{GM}_p} &&\Gamma(\varphi_2)\\
}
\end{equation}
commutes. We can now introduce a $\mathbb C$-action on $\mathcal{MC}(m)^{\Gamma}_{\ast}$ that is compatible with the $\mathbb C$-action on the space of stability conditions.
\begin{definition}
\label{def5.3}
For a framed cubic differential $\phi=(\varphi(z)dz^3,\theta)\in\mathcal{MC}(m)^{\Gamma}_{\ast}$, the new framed differential $\phi':=z\cdot\phi=(z\cdot\varphi(z)dz^3,z\cdot\theta)$ for $z\in\mathbb C$ is defined as follows:
\[
z\cdot\varphi(z)dz^3:=e^{-3i\pi z}\cdot\varphi(z)dz^3,\quad z\cdot\theta:=\mathrm{GM}_p\circ\theta,
\]
where $\mathrm{GM}_p$ represents the Gauss-Manin connection along the path $p(t):=e^{-3i\pi zt}\varphi(z)dz^3$. 
\end{definition}
In a similar manner, we can establish the notions of $\mathcal{MC}(m)_{1,\ast}, \mathcal{MC}(m)_{1,\ast}^{\Gamma}$, and the $\mathbb C$-action on $\mathcal{MC}(m)^{\Gamma}_{1,\ast}$. A point $\phi=\varphi(z)dz^3\in\mathcal{MC}(m)_1$ is called BPS-free if the WKB spectral network $\mathcal W(\varphi,0)$ is BPS-free. Equivalently, we have the BPS-free concept for the other spaces as well.

\begin{remark}
In fact, the space $\mathcal{MC}(m)^{\Gamma}$ can be understood as the space of WKB spectral networks $\mathcal W(\varphi,0)$, with $\varphi(z)$ being an arbitrary polynomial of degree $m$ in $\mathbb C$. Similarly, the space $\mathcal{MC}(m)^{\Gamma}_1$ corresponds to the space of WKB spectral networks $\mathcal W(\varphi,0)$, where zeros of $\varphi(z)$ are simple and almost on a line in $\mathbb C$. The latter space $\mathcal{MC}(m)^{\Gamma}_1$ is our main focus in this section.
\end{remark}

\subsection{Categories from cubic differentials}
\label{CY3fcubic}
In this section, we will introduce the $CY_3$ categories from BPS-free framed polynomial cubic differentials in $\mathcal{MC}(m)_1$ by using the quiver with potential given in \cite{CZ}. We recommend \cite{DWZ,Gin,Kel1,Kel2,M2} for some basics of quivers with potentials, as well as the formation of $CY_3$ (Calabi-Yau of dimension 3) categories from quivers with potentials.

Let $\varphi(z)dz^3\in\mathcal{MC}(m)_{1}$ be a BPS-free framed polynomial cubic differential. According to \Cref{main_thm1} and \Cref{Postnikov diagram}, $(\varphi(z)dz^3,\theta)$ will give rise to a quiver $Q(\varphi,0)$, which can be obtained from a Postnikov diagram $P(\varphi,0)$. $Q(\varphi,0)$ is well-defined as we do not specify a particular seed corresponding to $Q(\varphi,0)$. For simplicity, we denote $Q(\varphi,0)$ by $Q(\varphi)$. Note that $Q(\varphi)$ is an iced quiver. We denote the \emph{principal part} of $Q({\varphi})$ by $Q(\varphi)^{pr}$, which is the full subquiver of $Q(\varphi)$ by removing iced vertices and the associated arrows. 

We can now describe $CY_3$ categories associated with quiver with potentials that arise from classes in $\mathcal{MC}(m)_{1}$. Following the constructions in \cite[Definition 2.1]{CZ}, there is a reduced potential $W(\varphi)^{pr}$ associated with $Q(\varphi)^{pr}$. By \cite[Theorem 2.4 and Theorem 2.14]{CZ}, we know that mutations of $(Q(\varphi)^{pr},W(\varphi)^{pr})$ are compatible with the geometric exchanges of $P(\varphi)$, and $(Q(\varphi)^{pr},W(\varphi)^{pr})$ is the unique nondegenerate quiver with potential with underlying quiver $\varphi(Q)$ up to right equivalence and mutation equivalence. $(Q(\varphi)^{pr}, W(\varphi)^{pr})$ is defined to be the quiver with potential associated with $\varphi(z)dz^3\in\mathcal{MC}(m)_1$. According to \cite{Kel2,KY}, we can obtain a $CY_3$ category $\mathrm{pvd}(\Gamma(Q(\varphi)^{pr}, W(\varphi)^{pr}))$, which has a canonical bounded t-structure whose heart 
\[
\mathcal H(Q(\varphi)^{pr},W(\varphi)^{pr})\cong\mathrm{mod}\,\mathcal J(Q(\varphi)^{pr},W(\varphi)^{pr}).
\]
In particular, the simple modules of this heart are in bijection with the vertices of $Q(\varphi)^{pr}$, and the spaces of extensions between them are determined by the arrows in $Q(\varphi)^{pr}$. Let $k$ be a vertex of $Q(\varphi)^{pr}$. Due to \cite{KY}, there exists a pair of $\mathbb C$-linear triangulated equivalences
\[
\Phi_{\pm}:\mathrm{pvd}(\Gamma(\mu_k(Q(\varphi)^{pr}, W(\varphi)^{pr})))\to\mathrm{pvd}(\Gamma(Q(\varphi)^{pr}, W(\varphi)^{pr})).
\]
Moreover, $\Phi_{\pm}$ induce tilts (cf. \cite{KQ2}) in the simple object $S_k\in\mathcal H(Q(\varphi)^{pr},W(\varphi)^{pr})$ in the sense that 
\[
\Phi_{\pm}(\mathcal H(\mu_k(Q(\varphi)^{pr},W(\varphi)^{pr}))=\mu_{S_k}^{\pm}(\mathcal H(Q(\varphi)^{pr},W(\varphi)^{pr})).
\]


\subsection{Stability conditions from cubic differentials}
\label{cubictostab}
As an application, we aim to construct a holomorphic map $K$ between th spaces of framed polynomial cubic differentials, associated with these WKB spectral networks, into spaces of stability conditions, adopting the approach of \cite{BS}. 

We refer to \cite{B3} for a comprehensive introduction to stability conditions on triangulated categories. A crucial property of stability conditions is summarized as follows. The stability condition space $\mathrm{Stab}(\mathcal D)$ on a triangulated category $\mathcal D$ has a complex structure and the projection map
\[
\pi:\mathrm{Stab}(\mathcal D)\to\mathrm{Hom}_{\mathbb Z}(K(\mathcal D),\mathbb C)
\]
defined by $(Z,\mathcal P)\mapsto Z$ is a local isomorphism (\cite[Theorem 1.2]{B3}). We also introduce two commuting group actions on $\mathrm{Stab}(\mathcal D)$. The first is the action of $\mathrm{Aut}(\mathcal D)$. For any $\Phi\in\mathrm{Aut}(\mathcal D)$ and $(Z,\mathcal P)\in\mathrm{Stab}(\mathcal D)$, we set $\Phi\cdot(Z,\mathcal P)=(Z\circ\Phi^{-1},\Phi(\mathcal P))$. The second is the $\mathbb C$-action. For any $z\in\mathbb C$ and $(Z,\mathcal P)\in\mathrm{Stab}(\mathcal D)$, we set $z\cdot(Z,\mathcal P)=(Z',\mathcal P')$ where
\[
Z'(E)=e^{-i\pi z}Z(E)
\]
for all $E\in\mathcal D$ and 
\[
\mathcal P'(\phi)=\mathcal P(\phi+\mathrm{Re}(z))
\]
for $\phi\in\mathbb R$.

We shall now fix a base-point $\phi_0=(\varphi_0(z)dz^3,\theta_0)\in\mathcal{MC}(m)^{\Gamma}_{1}$, supposing that $\mathcal W(\varphi_0,0)$ is BPS-free and generic. Denote by $\mathcal{MC}(m)^{\Gamma}_{1,\ast}$ the connected component in $\mathcal{MC}(m)^{\Gamma}_{1}$ containing $\phi_0$. For the sake of convenience, we will represent $\mathcal D$ as $\mathcal D(\phi_0)$ and use $\mathcal H$ to refer to the canonical heart of $\mathcal D$ throughout the upcoming discussion. We define $\mathrm{Stab}_0(\mathcal D)$ as the connected component in $\mathrm{Stab}(\mathcal D)$ containing stability conditions possessing heart $\mathcal H$. Let $\mathcal Aut_0(\mathcal D):=\mathrm{Aut}_{0}(\mathcal D)/\mathrm{Nil}_0(\mathcal D)$, where $\mathrm{Aut}_{0}(\mathcal D)$ is the subgroup of $\mathrm{Aut}(\mathcal D)$ that preserves $\mathrm{Stab}_0(\mathcal D)$ and $\mathrm{Nil}_0(\mathcal D)$ consists of automorphisms in $\mathrm{Aut}_{0}(\mathcal D)$ that act trivially on $\mathrm{Stab}_0(\mathcal D)$. We describe the action of $t\in\mathbb C$ on $\mathrm{Hom}_{\mathbb Z}(\Gamma,\mathbb C)$ as follows: 
\[
Z\mapsto e^{-i\pi t}\cdot Z,
\]
for $Z\in\mathrm{Hom}_{\mathbb Z}(\Gamma,\mathbb C)$. Now we can state the following result. 
\begin{theorem}
There is a holomorphic embedding $K$ between complex manifolds that fits into a commutative diagram
\begin{equation}
\label{diagram6.1}
\xymatrix @C=5mm{
\mathcal{MC}(m)^{\Gamma}_{1,\ast}\ar[rr]^{K\qquad\,\,} \ar[rd]_{\pi_1}&&\mathrm{Stab}_{0}(\mathcal D)/\mathcal Aut_0(\mathcal D) \ar[ld]^{\pi}\\
&\mathrm{Hom}_{\mathbb Z}(\Gamma,\mathbb C)&
}
\end{equation}
and which commutes with the $\mathbb C$-actions on both sides.
\end{theorem}

\begin{proof}
As mentioned in \cite[Proposition 11.3]{BS}, the space $\mathrm{Stab}_0(\mathcal D)/\mathcal Aut_0(\mathcal D)$ possesses a manifold structure due to the fact that the action of the group $\mathcal Aut_0(\mathcal D)$ on the connected component $\mathrm{Stab}_0(\mathcal D)$ is free. Now we establish a homomorphism 
\begin{equation}
\begin{split}
\nu_0:K(\mathcal D)&\to\Gamma(\varphi)\\
S_e&\mapsto [X_e],
\end{split}
\end{equation}
where $S_e$ is the simple object associated with $e\in Q(\varphi_0)^{pr}$, and $[X_e]\in\Gamma(\varphi)$ is the homology class corresponding to the spectral coordinate $X_e$ with respect to $e$ as described in \Cref{spectral coordinate1} and \eqref{spectral coordinate0}. According to \Cref{generator}, we conclude that $\nu_0$ is an isomorphism of abelian groups. In particular, $\nu_0$ takes the Euler form to the intersection form. We define $Z_{0}:=Z_{\varphi_0}\circ\nu_{0}$, where $Z_{\varphi_0}$ is the period map of $\varphi_0(z)$. Then $(\mathcal H, Z_{0})$ forms a unique stability condition, denoted by $\sigma(\phi_0)\in\mathrm{Stab}(\mathcal D)$, corresponding to $\phi_0$. Suppose $\phi$ is another BPS-free framed cubic differential in $\mathcal{MC}(m)^{\Gamma}_{1,\ast}$. Using a similar approach as \cite[Proposition 10.7]{BS}, we can construct an equivalence $\Psi:\mathcal{D}(\phi)\to\mathcal D$ such that $\nu_0\circ\psi=\theta_0\circ\theta^{-1}\circ\nu$, where $\psi$ is the map on the Grothendieck groups induced by $\Psi$. Then, we can derive a holomorphic map 
\begin{equation}
\begin{split}
K_0:B_0&\to\mathrm{Stab}_0(\mathcal D)/\mathcal Aut_0(\mathcal D)\\ 
\phi&\mapsto\Psi(\sigma(\phi)),
\end{split}
\end{equation}
where $B_0$ consists of all BPS-free points in $\mathrm{Stab}_0(\mathcal D)/\mathcal Aut_0(\mathcal D)$, which makes the diagram \eqref{diagram6.1} commute. We can further extend $K_0$ to another holomorphic map $K:\mathcal{MC}_{m,1}^{\Gamma}\to\mathrm{Stab}_0(\mathcal D)/\mathcal Aut_0(\mathcal D)$, which ensures that the diagram \eqref{diagram6.1} remains commutative, using the same argument as \cite[Proposition 11.3]{BS}. As a result of the preceding discussions, we infer that the map $K$ is injective. Moreover, since $\pi\circ K=\pi_1$, and both $\pi$ and $\pi_1$ possess the properties of local ismorphism and local injection, respectively, we obtain that $K$ is also an embedding. 
\end{proof}

Let $\mathcal{MC}(m)^{\Gamma}_{\ast,s}$ be an open subspace of $\mathcal{MC}(m)^{\Gamma}_{\ast}$ consisting of framed differentials $(\varphi(z)dz^3,\theta)$ where zeros of $\varphi(z)$ are distinct. We then propose the following conjecture as a potential avenue for future exploration: 
\begin{conjecture}
The map $K$ can be holomorphically extended to $\mathcal{MC}(m)^{\Gamma}_{\ast,s}$, denoted as $\widetilde{K}$:
\[
\widetilde{K}: \mathcal{MC}(m)^{\Gamma}_{\ast,s}\to\mathrm{Stab}_0(\mathcal D)/\mathcal Aut_0(\mathcal D).
\]
\end{conjecture}


\section{A further example}
\label{further example}
In this section, we consider a specific WKB spectral network $\mathcal W(\varphi=\frac{1}{2}(-z^3+3z^2+2),\vartheta)$ referring to \cite{N1}. The geometric model of $\mathcal W(\varphi,\vartheta_0=0.1)$ is given in \Cref{23}.

Compared to cases where the zeros of polynomials are almost on a line in $\mathbb C$, $\mathcal W(\varphi,\vartheta)$ is in a different chamber, meaning that the BPS counts are different. In this case, the spectral curve $\Sigma(\varphi)$ is a 3-sheeted cover of $\mathbb C$, with branch points of order $3$ over $\mathrm{Zero}(\varphi)$. $\Sigma(\varphi)$ is a torus with 3-holes. Thus, $\Gamma(\varphi)$ is a lattice of rank 4, with a rank 2 intersection pairing. We present a choice of generators in \Cref{23}, where $\langle\gamma_1,\gamma_2\rangle=1$ and $\gamma_3,\gamma_4$ lie in the kernal of the pairing $\langle-,-\rangle$. The collection of compatible abelianization trees from $\mathcal W(\varphi,\vartheta_0)$ is given by 
\[
\mathcal{T}(\varphi,\vartheta_0)=\mathcal{T}^{\circ}(\varphi,\vartheta_0)\cup\{\mathrm{T}_{2,3,4,5,6,1},\mathrm{T}_{1,3,6},\mathrm{T}_{2,3,6},\mathrm{T}_{1,4,6}\},
\]
where $\mathrm{T}_{2,3,4,5,6,1}$ is shown in \Cref{ex:compatible tree m=3}.
\begin{figure}
\centering
\includegraphics[width=0.4\textwidth]{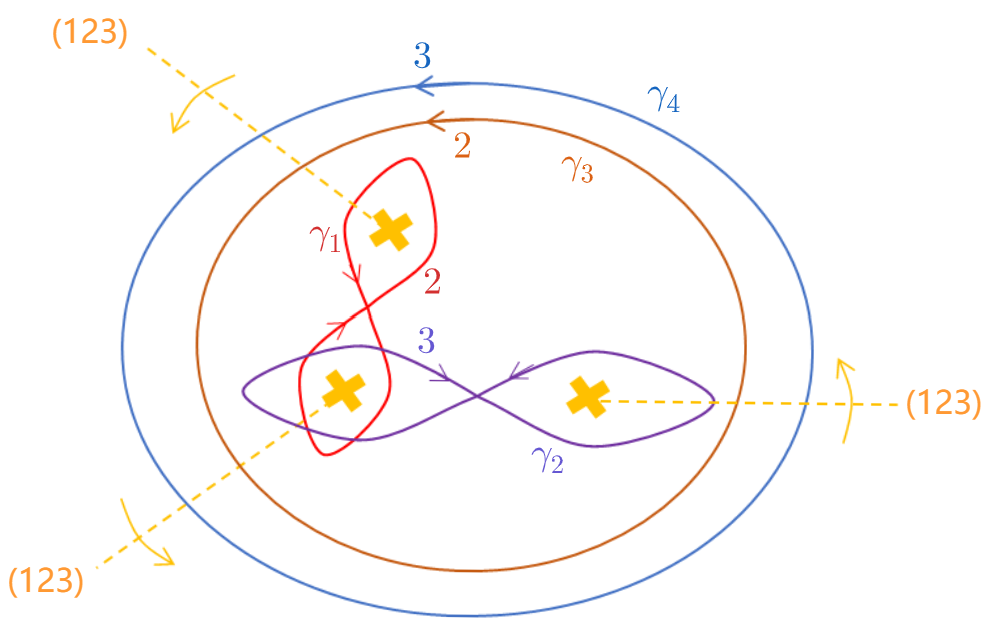}
\caption{Generators $\{\gamma_i\}_{i=1}^4$ for $\Gamma=H_1(\Sigma(\varphi),\mathbb Z)$.}
\label{22}
\end{figure}

\begin{figure}
\centering
\includegraphics[width=0.3\textwidth]{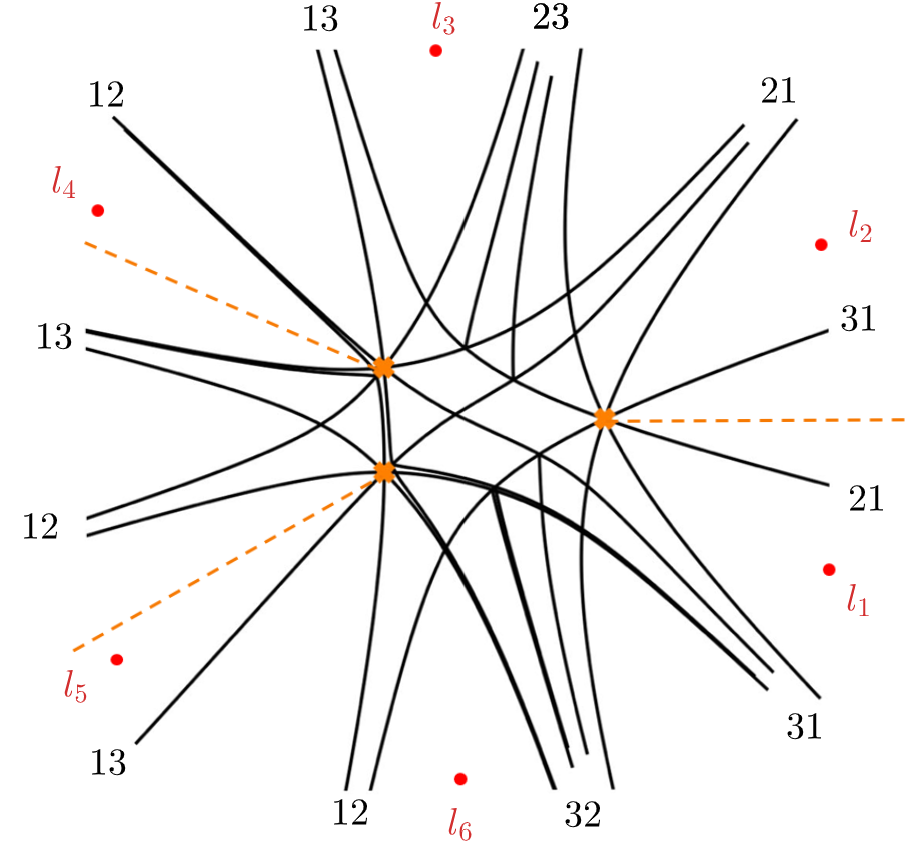}
\caption{The geometric model of $\mathcal W(\varphi,\vartheta_0)$.}
\label{23}
\end{figure}

By examining the spectral network $\mathcal W(\varphi,\vartheta)$ as $\vartheta$ varies, we can obtain the BPS counts $\Omega(\varphi,\gamma)$ as follows. For each unordered pair of distinct zeros $(z,z')$ of $\varphi$, we find $6$ finite BPS webs as shown in the left figure of \Cref{BPS counts}. Additionally, there are also $6$ more finite BPS webs that are three-string junctions involving all three zeros of $\varphi(z)$, as depicted in the right figure of \Cref{BPS counts}. This gives us a total of 24 nonzero BPS counts.

By varying $\vartheta$ from $\vartheta_0$ counterclockwise, we can obtain different collection of compatible abelianization trees corresponding to each BPS-free locus. It can be checked that each collection of compatible abelianization trees $\mathcal{T}(\varphi,\vartheta)$ from a BPS-free locus can provide a seed $(Q(\varphi,\vartheta),\mathcal{T}(\varphi,\vartheta))$ in $\mathbb C[Gr(3,6)]$, and each crossing of $\mathcal W(\varphi,\vartheta)$ corrresponds to a seed mutation. The first six collections of compatible abelianization trees, in order, are listed as follows :
\begin{table}[h!]
\begin{center}
\begin{tabular}{|m{5em}|m{20em}|}
\hline
$\vartheta$&$\mathcal{T}^{\circ}(\varphi,\vartheta)$\\
\hline
$\vartheta_0$&$\mathrm{T}_{2,3,4,5,6,1},\mathrm{T}_{1,3,6},\mathrm{T}_{2,3,6},\mathrm{T}_{1,4,6}$\\
\hline
$\vartheta_1$&$\mathrm{T}_{2,3,4,5,6,1},\mathrm{T}_{2,4,5},\mathrm{T}_{2,3,6},\mathrm{T}_{1,4,6}$\\
\hline
$\vartheta_2$&$\mathrm{T}_{2,4,6},\mathrm{T}_{2,4,5},\mathrm{T}_{2,3,6},\mathrm{T}_{1,4,6}$\\
\hline
$\vartheta_3$&$\mathrm{T}_{2,4,6},\mathrm{T}_{2,4,5},\mathrm{T}_{2,3,6},\mathrm{T}_{2,5,6}$\\
\hline
$\vartheta_4$&$\mathrm{T}_{2,4,6},\mathrm{T}_{3,4,6},\mathrm{T}_{2,3,6},\mathrm{T}_{2,5,6}$\\
\hline
$\vartheta_5$&$\mathrm{T}_{2,4,6},\mathrm{T}_{3,4,6},\mathrm{T}_{1,2,4},\mathrm{T}_{2,5,6}$\\
\hline
\end{tabular}
\end{center}
\caption{The first six collections of compatible abelianization trees $\mathcal{T}^{\circ}(\varphi,\vartheta)$}
\end{table}

The quiver $Q(\varphi,\vartheta_2)$ associated with $\mathcal W(\varphi,\vartheta_2)$ has been shown in \Cref{26}. Note that $\mathcal{T(\varphi,\vartheta_2)}$ is considered as extended clusters, and $\mathcal{T}^{\circ}(\varphi,\vartheta_2)$ is taken as cluster variables and others as coefficient variables. The exchange relations corresponding to mutations at $\mathrm{T}_{1,4,6},\mathrm{T}_{2,4,5},\mathrm{T}_{2,3,6}$ on $Q(\varphi,\vartheta_2)$ are written as follows:
\begin{enumerate}
\item $\mathrm{T}_{1,4,6}\mathrm{T}_{2,5,6}=\mathrm{T}_{1,5,6}\mathrm{T}_{2,4,6}+\mathrm{T}_{1,2,6}\mathrm{T}_{4,5,6}$;
\item $\mathrm{T}_{2,4,5}\mathrm{T}_{3,4,6}=\mathrm{T}_{3,4,5}\mathrm{T}_{2,4,6}+\mathrm{T}_{4,5,6}\mathrm{T}_{2,3,4}$;
\item $\mathrm{T}_{2,3,6}\mathrm{T}_{1,2,4}=\mathrm{T}_{1,2,3}\mathrm{T}_{2,4,6}+\mathrm{T}_{2,3,4}\mathrm{T}_{1,2,6}$.
\end{enumerate}
Actually, the collections of compatible abelianization trees $\mathcal{T}(\varphi,\vartheta_i),\, i=2,3,4,5$ can be embedded into Postnikov diagrams $P(\varphi,\vartheta_i)$ and the mutations described above correspond to geometric exchanges on the Postnikov diagrams. The quiver $Q(\varphi,\vartheta_0)$ associated to $\mathcal W(\varphi,\vartheta_0)$ is shown in \Cref{27}, which satisfies
\[
Q(\varphi,\vartheta_0)=\mu_{\mathrm{T}_{2,4,5}}\mu_{\mathrm T_{2,4,6}}(Q(\varphi,\vartheta_2)).
\]

The spectral coordinates associated with $\mathcal W(\varphi,\vartheta)$ can be defined similarly by equations \eqref{spectral coordinate0} and \eqref{spectral coordinate}. As an example, the spectral coordinates associated with $\mathcal W(\varphi,\vartheta_0)$ are written as follows:
\[
X_{\mathrm{T}_{1,4,6}}=\frac{A_{2,3,4,5,6,1}}{A_{1,5,6}A_{2,3,4}}=X_{\gamma_1},
\]
\[
X_{\mathrm{T}_{2,3,6}}=\frac{A_{2,3,4,5,6,1}}{A_{1,2,3}A_{4,5,6}}=X_{\gamma_1-\gamma_3},
\]
\[
X_{\mathrm{T}_{1,3,6}}=\frac{A_{1,2,6}A_{3,4,5}}{A_{2,3,4,5,6,1}}=X_{-\gamma_1+\gamma_3+\gamma_4},
\]
\[
X_{\mathrm{T}_{2,3,4,5,6,1}}=\frac{A_{1,3,6}A_{4,5,6}A_{2,3,4}}{A_{3,4,5}A_{2,3,6}A_{1,4,6}}=X_{-\gamma_2-\gamma_3-\gamma_4}.
\]
The bipartification on $\mathrm{T}_{2,3,4,5,6,1}$ is defined by assigning black color to those boundary nodes. Then, by performing some calculations, we have
\[
A_{2,3,4,5,6,1}=\mathrm{det}
\begin{bmatrix}
A_{2,3,6}&A_{4,5,6}\\
A_{1,2,3}&A_{1,4,5}\\
\end{bmatrix}.
\]

\begin{remark}
Lastly, it is worth noting that solutions to the Riemann-Hilbert problem arising from $\mathcal{W}(\varphi,\vartheta)$ can be constructed using a similar approach demonstrated in \Cref{solution}. Moreover, $\mathcal{W}(\varphi,\vartheta)$ can also be linked to a stability condition as formulated in \Cref{cubicstab}.
\end{remark}

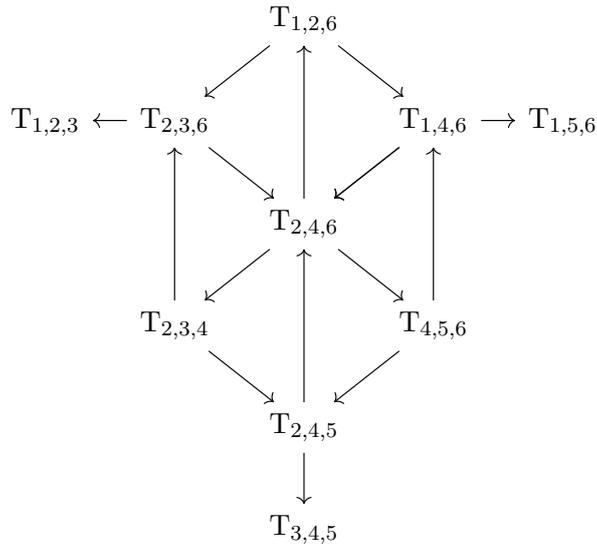
\begin{figure}
\[
\begin{tikzcd}[column sep=small]
&& {\mathrm T_{1,2,6}} \\
{\mathrm T_{1,2,3}} & {\mathrm T_{2,3,6}} && {\mathrm T_{1,4,6}} & {\mathrm T_{1,5,6}}\\
&&\mathrm{T}_{2,4,6}\\
& {\mathrm T_{2,3,4}} && {\mathrm T_{4,5,6}}\\
&&\mathrm{T}_{2,4,5}\\
&&\mathrm{T}_{3,4,5}
\arrow[from=1-3, to=2-4]
\arrow[from=3-3, to=1-3]
\arrow[from=5-3, to=3-3]
\arrow[from=1-3, to=2-2]
\arrow[from=2-4, to=2-5]
\arrow[from=4-2, to=2-2]
\arrow[from=2-2, to=3-3]
\arrow[from=2-4, to=3-3]
\arrow[from=3-3, to=4-4]
\arrow[from=4-2, to=5-3]
\arrow[from=2-4, to=3-3]
\arrow[from=2-2, to=2-1]
\arrow[from=4-4, to=2-4]
\arrow[from=3-3, to=4-2]
\arrow[from=5-3, to=6-3]
\arrow[from=4-4, to=5-3]
\end{tikzcd}
\]
\caption{The quiver $Q(\varphi,\vartheta_2)$ associated to $\mathcal W(\varphi,\vartheta_2)$.}
\label{26}
\end{figure}

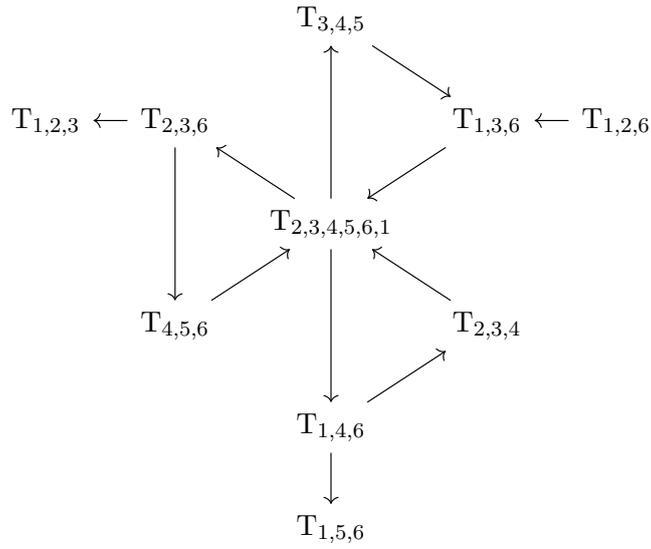
\begin{figure}
\[
\begin{tikzcd}[column sep=small]
&& {\mathrm T_{3,4,5}} \\
{\mathrm T_{1,2,3}} & {\mathrm T_{2,3,6}} && {\mathrm T_{1,3,6}} & {\mathrm T_{1,2,6}}\\
&&\mathrm{T}_{2,3,4,5,6,1}\\
& {\mathrm T_{4,5,6}} && {\mathrm T_{2,3,4}}\\
&&\mathrm{T}_{1,4,6}\\
&&\mathrm{T}_{1,5,6}
\arrow[from=1-3, to=2-4]
\arrow[from=3-3, to=1-3]
\arrow[from=3-3, to=5-3]
\arrow[from=2-5, to=2-4]
\arrow[from=2-2, to=4-2]
\arrow[from=3-3, to=2-2]
\arrow[from=4-4, to=3-3]
\arrow[from=2-4, to=3-3]
\arrow[from=2-2, to=2-1]
\arrow[from=4-2, to=3-3]
\arrow[from=5-3, to=6-3]
\arrow[from=5-3, to=4-4]
\end{tikzcd}
\]
\caption{The quiver $Q(\varphi,\vartheta_0)$ associated to $\mathcal W(\varphi,\vartheta_0)$.}
\label{27}
\end{figure}


\appendix
\section{Solutions to certain differential equations}
\label{App:differential equation}
In order to solve the Riemann-Hilbert problem, it is necessary to analyze the asymptotic properties of the solutions to certain differential equations. In the case of quadratic differentials, it is required to analyze the Schr\"{o}dinger equation, which has been well studied in \cite{S1, S2,W}. For our purpose, we need to consider the following differential equation:
\begin{equation}
\label{A.ODE}
y'''(z)-\varphi(z)y(z)=0,
\end{equation}
where $\varphi(z)=z^m+a_{1}z^{m-1}+\dots+a_{m-1}z+a_m$ is a polynomial of degree $m\ge2$. This equation is defined in a neighborhood of infinity. We can also express \eqref{A.ODE} in terms of a new variables $\xi$ such that $z=\xi^3$ and let $w=T^{-1}u$, where
\[
u=
\begin{bmatrix}
y(z)\\
y'(z)\\
y''(z)\\
\end{bmatrix},\,
T=\begin{bmatrix}
1&\omega&1\\
\xi^m&\omega^2\xi^m&\omega^2\xi^m\\
\xi^{2m}&\xi^{2m}&\omega\xi^{2m}
\end{bmatrix},\,\omega=e^{2\pi i/3}.
\]
Then \eqref{A.ODE} becomes 
\begin{equation}
\label{equivalent differential equation}
\frac{dw}{d\xi}=\xi^{m+2}C(\xi)w,
\end{equation}
where
\[
C(\xi)=\begin{bmatrix}
3&0&0\\
0&3\omega&0\\
0&0&3\omega^2\\
\end{bmatrix}+O(\xi^{-1}).
\]

The strategy for obtaining the asymptotic properties of solutions to equation \eqref{A.ODE} follows a similar approach taken in \cite{S1}.

\begin{theorem}
\label{mainthm4}
The differential equation $\eqref{A.ODE}$ admits a solution 
\[
y=y_m(z,a_1,\dots,a_{m})
\]
satisfying the following properties:
\begin{enumerate}
\item $y_m(z,a)$ is an entire function of $(z,a_1,\dots,a_{m})$;
\item $y_m(z,a)$ admits an asymptotic representation
\begin{equation}
\label{solution1}
y_m(z,a)\cong z^{r_m}(1+\sum_{N=1}^{\infty}B_{m,N}z^{-\frac{1}{3}N})\mathrm{exp}(E_m(z,a))
\end{equation}
uniformly on each compact set in $(a_1,\dots,a_m)$-space as $z$ tends to infinity in any closed subsector of the open sector 
\begin{equation}
\label{opensector}
-\frac{7\pi}{m+3}<\mathrm{arg}\,z<\frac{\pi}{m+3}
\end{equation}
where
\begin{equation}
\label{solution2}
E_m(z,a)=\frac{3}{m+3}z^{\frac{1}{3}(m+3)}+\sum^{m+2}_{N=1}A_{m,N}z^{\frac{1}{3}(m+3-N)} 
\end{equation}
and $r_{m},A_{m,N}$ and $B_{m,N}$ are polynomials in $(a_1,\dots,a_{m})$. 
\item the first and second derivatives $y'_m(z,a),y''_m(z,a)$ of $y_m(z,a)$ with respect to $z$ admit asymptotic representations
\begin{equation}
\label{solution'}
y'_m(z,a)\cong z^{\frac{1}{3}m+r_m}(1+\sum_{N=1}^{\infty}C_{m,N}z^{-\frac{1}{3}N})\mathrm{exp}(E_m(z,a))
\end{equation}
\begin{equation}
\label{solution''}
y''_m(z,a)\cong z^{\frac{2}{3}m+r_m}(1+\sum_{N=1}^{\infty}D_{m,N}z^{-\frac{1}{3}N})\mathrm{exp}(E_m(z,a))
\end{equation}
uniformly on each compact set in $(a_1,\dots,a_{m})$-space as $z$ tends to infinity in any closed subsector of the open sector \eqref{opensector}, where $C_{m,N}$ and $D_{m,N}$ are polynomials in $(a_1,\dots,a_{m})$.
\end{enumerate}
\end{theorem}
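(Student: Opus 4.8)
The plan is to follow the method of Sibuya \cite{S1} developed for second-order equations, transposed to the third-order setting via the first-order system \eqref{equivalent differential equation}. First I would work with $dw/d\xi = \xi^{m+2}C(\xi)w$, obtained from \eqref{A.ODE} by the substitution $z=\xi^3$, $w=T^{-1}u$, whose leading coefficient matrix $\mathrm{diag}(3,3\omega,3\omega^2)$ has three \emph{distinct} eigenvalues. Because the eigenvalues are distinct, the Hukuhara--Turrittin formal reduction applies: there is a formal gauge transformation $P(\xi)=I+\sum_{k\ge1}P_k\xi^{-k}$, with each $P_k$ depending polynomially on $a=(a_1,\dots,a_m)$, conjugating the system to diagonal formal form. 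Integrating the diagonal entries produces a formal fundamental matrix whose columns have the shape $\xi^{\rho_j}\bigl(I+O(\xi^{-1})\bigr)\exp(\Lambda_j(\xi))$ with each $\Lambda_j$ a polynomial in $\xi$. Re-expressing everything in $z=\xi^3$ turns $\Lambda_j$ into the exponent $E_m$ of \eqref{solution2} and turns the $\xi^{-k}$-series into the $z^{-N/3}$-series of \eqref{solution1}; the leading term reproduces $\tfrac{3}{m+3}z^{(m+3)/3}$, and the data $r_m,A_{m,N},B_{m,N}$ come out polynomial in $a$ by construction.

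Second, I would invoke the existence theorem for actual analytic solutions realizing a given formal solution (Sibuya \cite{S1}; Wasow): to each Stokes sector one attaches a genuine holomorphic solution whose asymptotic expansion there is exactly the formal series just produced. I would select the column corresponding to the eigenvalue whose exponential is recessive in $\mathcal S_1$; this selection yields the desired solution $y_m$, and, by tracking the sectors in which this solution remains subdominant until it crosses an anti-Stokes direction, fixes the maximal sector of validity to be precisely \eqref{opensector}.

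Third, for entireness and analytic dependence on parameters: since $\varphi$ is a polynomial, \eqref{A.ODE} has no finite singularities, so every solution is automatically entire in $z$; combining the standard theory of analytic dependence of solutions of linear ODEs on parameters with the polynomial dependence of the formal data on $a$, I can normalize $y_m$ so that it is entire jointly in $(z,a)$ and so that the asymptotics hold uniformly on compact subsets of $a$-space. Finally, the representations \eqref{solution'} and \eqref{solution''} for $y'_m$ and $y''_m$ are read directly off the remaining components of $u=Tw$, or equivalently obtained by termwise differentiation of \eqref{solution1}, which is legitimate precisely because the asymptotics are uniform; this accounts for the extra powers $z^{m/3}$ and $z^{2m/3}$ and produces the polynomial coefficients $C_{m,N},D_{m,N}$.

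The hardest part will be the existence/resummation step together with the uniform control in $a$ across the full sector, i.e.\ transferring Sibuya's second-order construction to the third-order system and pinning down the exact Stokes geometry that yields \eqref{opensector}. The formal diagonalization is essentially automatic once the eigenvalues are distinct; what genuinely requires care is proving that the formal series is realized by a true solution with uniform error estimates, and determining how far the subdominant expansion propagates before the solution becomes dominant.
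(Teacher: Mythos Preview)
Your plan is correct in outline and would prove the theorem, but it takes a different route from the paper. You propose the Hukuhara--Turrittin formal diagonalization of the system \eqref{equivalent differential equation} followed by a black-box appeal to the general asymptotic existence theorem (Wasow/Sibuya) and then a subdominance-extension argument to reach the full sector \eqref{opensector}. The paper instead performs a \emph{Riccati-type projectivization}: writing $w=(1,p,q)^t\exp(\int\eta^{m+2}Y\,d\eta)$ reduces the problem to a pair of coupled scalar nonlinear equations for $p,q$, for which a formal power-series solution is produced and then realized analytically by an explicit contraction-mapping argument on carefully chosen Riemann domains (their Lemma~\ref{lem1}, using the integral estimate Lemma~\ref{lem2} from \cite{S1}). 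Entireness is obtained via the fundamental matrix $\Psi(z)$, and the derivative asymptotics are obtained by a short direct computation with $F=z^{-r_m}e^{-E_m}y_m$ rather than by reading off the other components.

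What each approach buys: your diagonalization route is conceptually cleaner and makes the polynomial dependence of the formal data on $a$ transparent, but the ``hardest part'' you flag---uniform-in-$a$ resummation on the \emph{full} sector \eqref{opensector}---is exactly where the off-the-shelf theorems require care (the basic existence theorem yields a smaller sector, and one must then propagate). The paper's hands-on construction is longer but self-contained: the two simultaneous exponential kernels $e^{s-\sigma}$ and $e^{s'-\sigma'}$ in the integral equations encode precisely the intersection of the two half-planes that produces the sector \eqref{domain2} (hence \eqref{opensector}) in one stroke, and the contraction estimates are manifestly uniform on compact sets in $a$. Either path is acceptable; just be aware that if you go the Hukuhara--Turrittin route you will need to cite or reprove a parameter-uniform version of the sectorial realization theorem and carry out the subdominance extension explicitly, whereas the paper avoids this by building the solution directly on the large sector.
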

\begin{proof}
Let $p,q$, and $Y$ be unknown quantities and define
\[
w=
\begin{bmatrix}
1\\p\\q
\end{bmatrix}\mathrm{exp}(\int^{\xi}\eta^{m+2}Y(\eta)d\eta).
\]
Since 
\[
\frac{dw}{d\xi}=\left\{
\begin{bmatrix}
0\\\frac{dp}{d\xi}\\\frac{dq}{d\xi}
\end{bmatrix}+\xi^{m+2}Y(\xi)
\begin{bmatrix}
1\\p\\q
\end{bmatrix}
\right\}\mathrm{exp}(\int^{\xi}\eta^{m+2}Y(\eta)d\eta),
\]
we have 
\[
\begin{bmatrix}
\xi^{m+2}Y(\xi)\\
\frac{dp}{d\xi}+\xi^{m+2}Y(\xi)p\\
\frac{dq}{d\xi}+\xi^{m+2}Y(\xi)q
\end{bmatrix}=
\xi^{m+2}C(\xi)
\begin{bmatrix}
1\\p\\q
\end{bmatrix}.
\]
Representing the matrix $C(\xi)$ using the notation $(c_{ij}(\xi))_{3\times 3}$, we get 
\begin{equation}
\label{equation10}
\left\{
\begin{aligned}
Y(\xi)&=c_{11}(\xi)+c_{12}(\xi)p+c_{13}(\xi)q\\
\frac{dp}{d\xi}&=-\xi^{m+2}Y(\xi)p+\xi^{m+2}(c_{21}(\xi)+c_{22}(\xi)p+c_{23}(\xi)q)\\
\frac{dq}{d\xi}&=-\xi^{m+2}Y(\xi)q+\xi^{m+2}(c_{31}(\xi)+c_{32}(\xi)p+c_{33}(\xi)q)
\end{aligned}
\right.
\end{equation}
Equation \eqref{equation10} can be written as 
\begin{equation}
\label{equation11}
\left\{
\begin{aligned}
Y(\xi)&=c_{11}(\xi)+c_{12}(\xi)p+c_{13}(\xi)q\\
\frac{dp}{d\xi}&=\xi^{m+2}(c_{21}(\xi)+(c_{22}(\xi)-c_{11}(\xi))p-c_{12}(\xi)p^2+c_{23}(\xi)q-c_{13}(\xi)pq)\\
\frac{dq}{d\xi}&=\xi^{m+2}(c_{31}(\xi)+(c_{33}(\xi)-c_{11}(\xi))q-c_{13}(\xi)q^2+c_{32}(\xi)p-c_{13}(\xi)pq)
\end{aligned}
\right.
\end{equation}
To determine $p$ and $q$, we use the following lemma:
\begin{lemma}
\label{lem1}
The differential equation \eqref{equation11} has unique solutions $p(\xi),q(\xi)$ that satisfy the following conditions:
\begin{enumerate}
\item For any given $r\in(0,+\infty)$ and $\delta\in(0,\delta_0)$, where $\delta_0$ is a small positive number, there exists a positive number $N_{r,\delta}$ such that $p(\xi),q(\xi)$ are holomorphic with respect to $(\xi,a_1,\dots,a_{m})$ in the domain 
\begin{equation}
\label{domain0}
\left\{
\begin{aligned}
&|\xi|>N_{r,\delta},\\&|a_0|+\dots+|a_{m-1}|<r,\\
&|\frac{5\pi}{6}+(m+3)\mathrm{arg}\,\xi|<\frac{3\pi}{2}-\delta,\\
&|\frac{7\pi}{6}+(m+3)\mathrm{arg}\,\xi|<\frac{3\pi}{2}-\delta.
\end{aligned}
\right. 
\end{equation}
\item We have 
\begin{equation}
\label{equationpq}
p(\xi)\cong\hat p(\xi):=\sum^{\infty}_{N=1}p_N\xi^{-N},\quad q(\xi)\cong\hat q(\xi):=\sum^{\infty}_{N=1}q_N\xi^{-N}
\end{equation}
uniformly on each compact set in $(a_1,\dots,a_m)$-space as $\xi$ tends to infinity in any closed subsector of the open sector
\begin{equation}
\label{domain2}
-\frac{7\pi}{3(m+3)}<\mathrm{arg}\,\xi<\frac{\pi}{3(m+3)},
\end{equation}
where $p(\xi),q(\xi)$ are polynomials in $(a_1,\dots,a_{m})$.
\end{enumerate}
\end{lemma}
\begin{proof}
To construct a formal solution of \eqref{equation11}, we will express the last two equations in \eqref{equation11} as
\[
\xi^{-m-2}\frac{dp}{d\xi}=c_{21}(\xi)+(c_{22}(\xi)-c_{11}(\xi))p-c_{12}(\xi)p^2+c_{23}(\xi)q-c_{13}(\xi)pq),
\]
\[
\xi^{-m-2}\frac{dq}{d\xi}=c_{31}(\xi)+(c_{33}(\xi)-c_{11}(\xi))q-c_{13}(\xi)q^2+c_{32}(\xi)p-c_{13}(\xi)pq.
\]
By substituting \eqref{equationpq} into \eqref{equation11}, we obtain
\[
3(\omega-1)p_N=\alpha_N(p_1,\dots,p_{N-1},q_1,\dots,q_{N-1};a_1,\dots,a_{m}),
\]
\[
3(\omega^2-1)q_N=\beta_N(p_1,\dots,p_{N-1},q_1,\dots,q_{N-1};a_1,\dots,a_{m}),
\]
for $N\ge1$, where $\alpha_N,\beta_N$ are polynomials in $(p_1,\dots,p_{N-1},q_1,\dots,q_{N-1};a_1,\dots,a_{m})$. This obtains the existence of unique formal solutions, and its coefficients $p_N,q_N$ are polynomials in $(a_1,\dots,a_{m})$.

Define
\[
s=\frac{3(\omega-1)}{m+3}\xi^{m+3}.
\]
In the $s$-plane, let us consider a circle 
\begin{equation}
\label{circle}
|s|=M
\end{equation}
and two points
\begin{equation}
\begin{cases}
s_1=M\mathrm{exp}(i(\pi-\frac{1}{2}\delta))\\
s_2=M\mathrm{exp}(i(\frac{1}{2}\delta-\pi))\\
\end{cases}
\end{equation}
on the circle \eqref{circle}, where $\delta$ is a small positive constant and $M$ is a positive constant. Let $T_1$ and $T_2$ be the tangents to circle \eqref{circle} at $s=s_1$ and $s_2$, respectively. Let $T_1$ intersect the line: $\mathrm{arg}\,s=\frac{3\pi}{2}-\delta$ at $s=s_1'$, and let $T_2$ intersect the line: $\mathrm{arg}\,s=\delta-\frac{3\pi}{2}$ at $s=s_2'$. The Riemann domain $S_{\delta,M}$ is defined to contain the sector:
\[
|\mathrm{arg}\,s|<\frac{3\pi}{2}-\delta,\quad |s|>M'
\]
for a sufficiently large positive constant $M'$ and is bounded by the following curves:
\begin{equation}
\begin{cases}
s=-\tau\mathrm{exp}(i(\frac{3\pi}{2}-\delta))\quad (-\infty<\tau\le-|s_1'|),\\
s=s_1'+\tau\mathrm{exp}(i\mathrm{arg}(s_1-s_1'))\quad (0\le\tau\le|s_1-s_1'|),\\
s=Me^{i\tau} \quad (|\tau|\le\pi-\frac{1}{2}\delta),\\
s=s_2+\tau\mathrm{exp}(i(\delta-\frac{3\pi}{2}))\quad (|s_2'|\le\tau<+\infty).
\end{cases}
\end{equation}
For every point $s$ of $S_{\delta,M}$, we can define a straight line 
\begin{equation}
\label{sline}
\sigma=s+\tau e^{i\vartheta} \quad (0\le\tau<+\infty),
\end{equation}
such that the line $\eqref{sline}$ lies within $S_{\delta,M}$, and 
\[
|\vartheta|\le\frac{1}{2}(\pi-\delta).
\]
In a similar way, if we begin with another variable
\[
s'=\frac{3(\omega^2-1)}{m+3}\xi^{m+3},
\]
there is another Riemann domain $S'_{\delta, M}$ with respect to $s'$ and a straight line 
\begin{equation}
\label{sline2} 
\sigma'=s'+\tau e^{i\vartheta},
\end{equation}
contained in $S'_{\delta,M}$ with $|\vartheta|\le\frac{1}{2}(\pi-\delta)$. We now require another lemma:
\begin{lemma}\cite[Lemma 10.1]{S1}
\label{lem2}
Given positive constants $\delta$ and $\rho$, where $\delta$ is sufficiently small and $\rho$ is arbitrary. Then there exist constants $M_{\delta,\rho}$ and $L_0$ such that 
\[
\int_{\infty}^s|\sigma|^{-\rho}|e^{s\sigma}||\mathrm{d}\sigma|\le L_0|s|^{-\rho}
\]
for $s\in S_{\delta,M_{\delta,\rho}}$, where $L_0$ is independent of $\rho$, and the path of integration is the straight line \eqref{sline}.
\end{lemma}
We can now proceed to complete the proof of Lemma \ref{lem1}. Consider sectors $S_1, S_2$ in the $\xi$-plane and a domain $D_r$ in the $(a_1,\dots,a_{m})$-space defined respectively by 
\[
S_1: |\mathrm{arg}(\omega-1)+(m+3)\mathrm{arg}\xi|\le\frac{3\pi}{2},|\xi|\ge\Omega
\]
\[
S_2:|\mathrm{arg}(\omega^2-1)+(m+3)\mathrm{arg}\xi|\le\frac{3\pi}{2},|\xi|\ge\Omega
\]
and 
\[
D_r:|a_1|+\dots+|a_{m}|<r,
\]
where $\Omega$ is a fixed positive constant. It is known that there exist functions $\hat{p}_r(\xi),\hat{q}_r(\xi)$ which satisfy the following conditions:
\begin{enumerate}
\item $\hat{p}_r(\xi)$ is holomorphic in $(\xi,a_1,\dots,a_{m})$ for $\xi\in S_1$ and $(a_1,\dots,a_{m})\in D_r$;
\item $\hat{q}_r(\xi)$ is holomorphic in $(\xi,a_1,\dots,a_{m})$ for $\xi\in S_2$ and $(a_1,\dots,a_{m})\in D_r$;
\item $\hat p_r(\xi)\cong\hat p(\xi)$, $\frac{d\hat p_r(\xi)}{d\xi}\cong\frac{d\hat p(\xi)}{d\xi}$, as $\xi\to\infty$ in $S_1$ uniformly in $D_r$;
\item $\hat q_r(\xi)\cong\hat q(\xi)$, $\frac{d\hat q_r(\xi)}{d\xi}\cong\frac{d\hat q(\xi)}{d\xi}$, as $\xi\to\infty$ in $S_2$ uniformly in $D_r$.
\end{enumerate}
Set 
\[
p=\bar{p}+\hat{p}_r(\xi),\quad q=\bar q+\hat q_r(\xi).
\]
Then, 
\[
\frac{d\bar{p}}{d\xi}=\xi^{m+2}(\mu_{p,r}(\xi)+\lambda_{p,r}(\xi)\bar p+\nu_{p,r}(\xi)\bar p^2),
\]
\[
\frac{d\bar{q}}{d\xi}=\xi^{m+2}(\mu_{q,r}(\xi)+\lambda_{q,r}(\xi)\bar q+\nu_{q,r}(\xi)\bar q^2),
\]
in the common sector $S:=S_1\cap S_2$, where 
\begin{equation}
\begin{gathered}
\mu_{p,r}(\xi)=c_{21}(\xi)+(c_{22}(\xi)-c_{11}(\xi))\hat p_r-c_{12}(\xi)\hat p_r^2+c_{23}(\xi)q-c_{13}(\xi)\hat p_rq-\xi^{-m-2}\frac{d\hat p_r}{d\xi},\\
\mu_{q,r}(\xi)=c_{31}(\xi)+(c_{33}(\xi)-c_{11}(\xi))\hat q_r-c_{13}(\xi)\hat q_r^2+c_{32}(\xi)p-c_{13}(\xi)p\hat q_r-\xi^{-m-2}\frac{d\hat q_r}{d\xi},\\
\lambda_{p,r}(\xi)=(c_{22}(\xi)-c_{11}(\xi))-2c_{12}(\xi)\hat p_r-c_{13}(\xi)q,\\
\lambda_{q,r}(\xi)=(c_{33}(\xi)-c_{11}(\xi))-2c_{13}(\xi)\hat q_r-c_{13}(\xi)p,\\
\nu_{p,r}(\xi)=-c_{12}(\xi),\\
\nu_{q,r}(\xi)=-c_{13}(\xi).\\
\end{gathered}
\end{equation}
We have 
\begin{equation}
\label{asymp1}
\begin{cases}
\mu_{p,r}\cong0,\\
\mu_{q,r}\cong0,\\
\nu_{p,r}=O(\xi^{-1}),\\
\nu_{q,r}=O(\xi^{-1}),\\
\end{cases}
\quad (\xi\to\infty \text{ in } S \text{ uniformly in } D_r)
\end{equation}
The reason for the asymptotic property of $\mu_{p,r},\mu_{q,r}$ is that $\hat p(\xi),\hat q(\xi)$ are formal solutions of \eqref{equation11}.

We shall now construct solutions $\bar p,\bar q$ such that 
\begin{equation}
\label{asymptotic}
\bar p(\xi)\cong 0, \bar q(\xi)\cong 0,
\end{equation}
as $\xi\to\infty$ in $S^{\circ}$ uniformly in $D_r$, where $S^{\circ}$ denotes the interior of $S$. Consider the following equations
\begin{equation}
\label{equation15}
\begin{split}
\bar p(\xi)=\int^{\xi}_{\infty}\xi^{m+2}(\mu_{p,r}(\eta)+\widetilde{\lambda}_{p,r}(\eta)\bar p(\eta)+\nu_{p,r}(\eta)\bar p(\eta)^2)\mathrm{exp}(3(\omega-1)\frac{\xi^{m+3}-\eta^{m+3}}{m+3})d\eta,
\end{split}
\end{equation}
\begin{equation}
\label{equation16}
\begin{split}
\bar q(\xi)=\int^{\xi}_{\infty}\xi^{m+2}(\mu_{q,r}(\eta)+\widetilde{\lambda}_{q,r}(\eta)\bar q(\eta)+\nu_{q,r}(\eta)\bar q(\eta)^2)\mathrm{exp}(3(\omega^2-1)\frac{\xi^{m+3}-\eta^{m+3}}{m+3})d\eta,
\end{split}
\end{equation}
where
\[
\widetilde{\lambda}_{p,r}(\eta)=\lambda_{p,r}(\eta)-3(\omega-1), \quad \widetilde{\lambda}_{q,r}(\eta)=\lambda_{q,r}(\eta)-3(\omega^2-1).
\]
Let
\[
s=\frac{3(\omega-1)}{m+3}\xi^{m+3},\sigma=\frac{3(\omega-1)}{m+3}\eta^{m+3},
\]
\[
s'=\frac{3(\omega^2-1)}{m+3}\xi^{m+3},\sigma'=\frac{3(\omega^2-1)}{m+3}\eta^{m+3}.
\]
Then equations \eqref{equation15},\eqref{equation16} become 
\begin{equation}
\label{equation12}
\begin{split}
\bar p(\xi)=\frac{1}{3(\omega-1)}\int^{s}_{\infty}(\mu_{p,r}(\eta)+\widetilde{\lambda}_{p,r}(\eta)\bar p(\eta)+\nu_{p,r}(\eta)\bar p(\eta)^2)e^{s-\sigma}d\sigma,
\end{split}
\end{equation}
\begin{equation}
\label{equation13}
\begin{split}
\bar q(\xi)=\frac{1}{3(\omega^2-1)}\int^{s'}_{\infty}(\mu_{q,r}(\eta)+\widetilde{\lambda}_{q,r}(\eta)\bar q(\eta)+\nu_{q,r}(\eta)\bar q(\eta)^2)e^{s'-\sigma'}d\sigma'.
\end{split}
\end{equation}
Consider equations \eqref{equation12}, \eqref{equation13} in the domain
\begin{equation}
\label{domain3}
\widetilde{S_{\delta,M}}:=\{\xi\,|\,s\in S_{\delta,M},s'\in S'_{\delta,M}\},\quad (a_1,\dots,a_{m})\in D_r,
\end{equation}
where constants $r$ and $\delta$ are fixed, and the constant $M$ will be specified later. The paths of integration are given by the straight lines \eqref{sline}, \eqref{sline2} respectively.

Assume that $|w_1(\xi)|,|w_2(\xi)|\le|\xi|^{-1}$ in \eqref{domain3}, and set
\[
v_p(\xi)=\frac{1}{3(\omega-1)}\int^{s}_{\infty}(\mu_{p,r}(\eta)+{\widetilde{\lambda}}_{p,r,w_2}(\eta)w_1(\eta)+\nu_{p,r}(\eta)w_1(\eta)^2)e^{s-\sigma}d\sigma,
\]
\[
v_q(\xi)=\frac{1}{3(\omega^2-1)}\int^{s'}_{\infty}(\mu_{q,r}(\eta)+{\widetilde{\lambda}}_{q,r,w_1}(\eta)w_2(\eta)+\nu_{q,r}(\eta)w_2(\eta)^2)e^{s'-\sigma'}d\sigma',
\]
where 
\[
{\widetilde{\lambda}}_{p,r,w_2}(\xi)=(c_{22}(\xi)-c_{11}(\xi))-2c_{12}(\xi)\hat p_r-c_{13}(\xi)w_2(\xi), 
\]
\[
{\widetilde{\lambda}}_{q,r,w_1}(\xi)=(c_{33}(\xi)-c_{11}(\xi))-2c_{13}(\xi)\hat q_r-c_{13}(\xi)w_1(\xi).
\]
\begin{sloppypar}
By observing the asymptotic properties of $\mu_{p,r}(\xi),\mu_{q,r}(\xi),{\widetilde{\lambda}}_{p,r,w_2}(\eta),{\widetilde{\lambda}}_{q,r,w_2}(\eta),\nu_{p,r}(\xi), \nu_{q,r}(\xi)$, we have
\[
|\mu_{p,r}(\xi)|\le L|\xi|^{-2}, |{\widetilde{\lambda}}_{p,r,w_2}(\eta)|\le L|\xi|^{-1}, |\nu_{p,r}(\xi)|\le L|\xi|^{-1},
\]
\[
|\mu_{q,r}(\xi)|\le L|\xi|^{-2}, |{\widetilde{\lambda}}_{q,r,w_1}(\eta)|\le L|\xi|^{-1}, |\nu_{q,r}(\xi)|\le L|\xi|^{-1},
\]
\end{sloppypar}
in \eqref{domain3}, for some positive constant $L$. Then 
\[
|v_p(\xi)|\le\frac{C_1}{M'_1}\int^s_{\infty}|\eta|^{-1}|e^{s-\sigma}||d\sigma|,
\]
\[
|v_p(\xi)|\le\frac{C_2}{M_2'}\int^s_{\infty}|\eta|^{-1}|e^{s'-\sigma'}||d\sigma'|,
\]
where
\[
C_1=\frac{L}{3|\omega-1|}(2+1/M'_1),\quad M'_1=(\frac{m+3}{3|\omega-1|}M)^{1/(m+3)},
\]
\[
C_1=\frac{L}{3|\omega^2-1|}(2+1/M'_2),\quad M'_2=(\frac{m+3}{3|\omega^2-1|}M)^{1/(m+3)}.
\]
By Lemma \ref{lem2}, we obtain $|v_p(\xi)|\le|\xi|^{-1},|v_q(\xi)|\le|\xi|^{-1}$ in \eqref{domain3} if 
\[
M>M_{\delta,1/(m+3)},\quad C_1L_0\le M'_1,\quad C_2L_0\le M'_2.
\]
Assume that $|w_1^1(\xi)|,|w_1^2(\xi)|,|w_2^1(\xi)|,|w_2^2(\xi)|\le |\xi|^{-1}$, and let
\[
v_p(\xi)=\frac{1}{3(\omega-1)}\int^{s}_{\infty}({\widetilde{\lambda}}_{p,r,w_1^2}(\eta)w_1^1(\eta)-\widetilde{\lambda}_{p,r,w_2^2}(\eta)w_2^1(\eta))+\nu_{p,r}(\eta)(w_1^1(\eta)^2-w_2^1(\eta)^2))e^{s-\sigma}d\sigma,
\]
\[
v_q(\xi)=\frac{1}{3(\omega^2-1)}\int^{s'}_{\infty}({\widetilde{\lambda}}_{q,r,w_1^1}(\eta)w_1^2(\eta)-\widetilde{\lambda}_{q,r,w_2^1}(\eta)w_2^2(\eta))+\nu_{q,r}(\eta)(w_1^2(\eta)^2-w_2^2(\xi)^2))e^{s'-\sigma'}d\sigma'.
\]
Then, we have 
\[
|v_p(\xi)|\le\frac{L_0L(1+3/M')}{3|\omega-1|M'}||w_1^1(\xi)-w_2^1(\xi)||+\frac{L_0L}{3|\omega-1|M'^2}||w_1^2(\xi)-w_2^2(\xi)||,
\]
\[
|v_q(\xi)|\le\frac{L_0L(1+3/M')}{3|\omega^2-1|M'}||w_1^2(\xi)-w_2^2(\xi)||+\frac{L_0L}{3|\omega^2-1|M'^2}||w_1^1(\xi)-w_2^1(\xi)||
\]
in \eqref{domain3}, where $||\cdot||$ denotes $\mathrm{sup}|\cdot|$ in \eqref{domain3}.

Thus we can obtain the existence of solutions $\bar p(\xi),\bar q(\xi)$ such that $|\bar p(\xi)|\le|\xi|^{-1}, |\bar q(\xi)|\le|\xi|^{-1}$ in \eqref{domain3}. To prove that $\bar p(\xi), \bar q(\xi)$ satisfy the asymptotic condition \eqref{asymptotic}, we consider functions $w_1(\xi),w_2(\xi)$ such that $|w_1(\xi)|, |w_2(\xi)|\le K_{N}|\xi|^{-N}$ in \eqref{domain3}, and let 
\[
v_p(\xi)=\frac{1}{3(\omega-1)}\int^{s}_{\infty}(\mu_{p,r}(\eta)+{\widetilde{\lambda}}_{p,r,w_2}(\eta)w_1(\eta)+\nu_{p,r}(\eta)w_1(\eta)^2)e^{s-\sigma}d\sigma,
\]
\[
v_q(\xi)=\frac{1}{3(\omega^2-1)}\int^{s'}_{\infty}(\mu_{q,r}(\eta)+{\widetilde{\lambda}}_{q,r,w_1}(\eta)w_2(\eta)+\nu_{q,r}(\eta)w_2(\eta)^2)e^{s'-\sigma'}d\sigma'.
\]
From the asymptotic property \eqref{asymp1} of $\mu_{p,r}(\xi),\mu_{q,r}(\xi)$, we have $|\mu_{p,r}(\xi)|,|\mu_{q,r}(\xi)|\le L_N|\xi|^{-N-1}$ in \eqref{domain3}, where $L_N$ is a positive constant. Then, we have 
\[
|v_p(r)|\le\frac{L_N+LK_N+LK_N^2}{3|\omega-1|}\int^s_{\infty}|\eta|^{-N-1}|e^{s-\sigma}||d\sigma|,
\]
\[
|v_q(r)|\le\frac{L_N+LK_N+LK_N^2}{3|\omega^2-1|}\int^{s'}_{\infty}|\eta|^{-N-1}|e^{s'-\sigma'}||d\sigma'|.
\]
Hence, we can obtain $|v_p(\xi)|\le K'|\xi|^{-N-1}, |v_q(\xi)|\le K''|\xi|^{-N-1}$ in \eqref{domain3} by Lemma \ref{lem2}.
Set 
\begin{equation}
\label{solution3}
p(\xi)=\bar p(\xi)+\hat p_r(\xi),\quad q(\xi)=\bar q(\xi)+\hat q_r(\xi),
\end{equation}
which are solutions of \eqref{equation11}. If solutions \eqref{solution3} are independent of $r$ and $\delta$, then the proof of Lemma \ref{lem1} will be completed. Let $p_1(\xi)$ and $q_1(\xi)$ be solutions of \eqref{equation11} that satisfy all the requirements in the statement of Lemma \ref{lem1} in the domain 
\begin{equation}
\label{domain4}
\widetilde{S_{\delta_1,M_1}}, (a_1,\dots,a_m)\in D_{r_1},
\end{equation}
and let $p_2(\xi),q_2(\xi)$ be such solutions in the domain 
\begin{equation}
\label{domain5}
\widetilde{S_{\delta_2,M_2}}, (a_1,\dots,a_m)\in D_{r_2}.
\end{equation}
If we choose suitable values for $\delta,M$ and $r$, the domain 
\begin{equation}
\label{domain6}
\widetilde{S_{\delta,M}}, (a_1,\dots, a_m)\in D_r
\end{equation}
will be contained in \eqref{domain4} and \eqref{domain5}. Set 
\[
u(\xi)=p_1(\xi)-p_2(\xi), \quad v(\xi)=q_1(\xi)-q_2(\xi).
\]
Then
\[
\frac{du(\xi)}{d\xi}=\xi^{m+2}(J_1(\xi)u(\xi)+K_1(\xi)v(\xi)),
\]
\[
\frac{dv(\xi)}{d\xi}=\xi^{m+2}(J_2(\xi)u(\xi)+K_2(\xi)v(\xi)),
\]
where 
\[
J_1(\xi)=c_{22}(\xi)-c_{11}(\xi)-c_{12}(\xi)(p_1(\xi)+p_2(\xi))-c_{13}(\xi)q_1(\xi),
\]
\[
J_2(\xi)=c_{32}(\xi)-c_{13}(\xi)q_1(\xi),
\]
\[
K_1(\xi)=c_{23}(\xi)-c_{13}(\xi)p_2(\xi),
\]
\[
K_2(\xi)=c_{33}(\xi)-c_{11}(\xi)-c_{13}(\xi)(q_1(\xi)+q_2(\xi))-c_{13}(\xi)p_2(\xi).
\]
We shall analyze $u(\xi),v(\xi)$ in domain \eqref{domain6}. Since $p_1(\xi)$ $(q_1(\xi))$ and $p_2(\xi)$ $(q_2(\xi))$ satisfy the same asymptotic condition in domain \eqref{domain6}, we have
\begin{equation}
\label{asyp2}
u(\xi)\cong0,\,v(\xi)\cong0, \quad (\text{ as } \xi\to\infty \text{ in } \widetilde{S_{\delta,M}} \text{ uniformly in } D_r).
\end{equation}
We also have 
\[
J_1(\xi)=\omega-1+O(\xi^{-1}), \quad J_2(\xi)=O(\xi^{-1}),
\]
\[
K_1(\xi)=O(\xi^{-1}), \quad K_2(\xi)=\omega^2-1+O(\xi^{-1}),
\]
as $\xi\to\infty$ in $\widetilde{S_{\delta,M}}$ uniformly in $D_r$.
Based on these observations, we can get that $u(\xi)\equiv0, v(\xi)\equiv0$ in \eqref{domain6}, which completes the proof of Lemma \ref{lem1}.
\end{proof} 
Now we can set 
\[
Y(\xi)=c_{11}(\xi)+c_{12}(\xi)p+c_{13}(\xi)q.
\]
Then, in domain \eqref{domain0}, $Y(\xi)$ is a holomorphic function, and
\[
Y(\xi)\cong 3+\sum^{\infty}_{N=1}Y_N\xi^{-N}
\]
holds uniformly on each compact set in $(a_1,\dots,a_{m})$-space as $\xi$ tends to infinity in any closed subsector of \eqref{domain2}, where $Y_N$ are polynomials in $(a_1,\dots,a_{m})$. Let 
\[
\widetilde{Y}(\xi)=Y(\xi)-(3+\sum_{N=1}^{m+3}Y_N\xi^{-N})
\]
and 
\[
E(\xi)=\xi^{Y_{m+3}}\mathrm{exp}(\frac{3}{m+3}\xi^{m+3}+\sum^{m+2}_{N=1}\frac{Y_N}{m+3-N}\xi^{m+3-N}).
\]
Then set 
\[
w=\begin{bmatrix}
1\\p(\xi)\\q(\xi)
\end{bmatrix}E(\xi)\mathrm{exp}(\int^{\xi}_{\infty}\eta^{m+2}\tilde{Y}(\eta)d\eta)
\]
and 
\begin{equation}
\label{equation14}
u(z)=
\begin{bmatrix}
1&\omega&1\\
\xi^m&\omega^2\xi^m&\omega^2\xi^m\\
\xi^{2m}&\xi^{2m}&\omega\xi^{2m}
\end{bmatrix}
\begin{bmatrix}
1\\p(\xi)\\q(\xi)
\end{bmatrix}E(\xi)\mathrm{exp}(\int^{\xi}_{\infty}\eta^{m+2}\widetilde{Y}(\eta)d\eta).
\end{equation}
Let $\Psi(z)$ be the $3$-by-$3$ matrix such that 
\[
\Psi'(z)=A(z)\Psi(z), \Psi(0)=\mathrm{Id}.
\]
Note that $\Psi(z)$ and $\Psi(z)^{-1}$ are entire in $(z,a_1,\dots,a_{m})$, and 
\[
u(z)=\Psi(z)\Psi(z_0)^{-1}u(z_0).
\]
Let $(a_1^0,\dots,a_{m}^0)$ be any fixed, and let $V$ be a small neighborhood of $(a_1^0,\dots,a_{m}^0)$ in the $(a_1,\dots,a_{m})$-space. Choose $z_0$ so that $(z_0^{\frac{1}{3}},a_1,\dots,a_{m})$ is in \eqref{domain0} for every $(a_1,\dots,a_{m})\in V$, which means that $u(z_0)$ is holomorphic in $V$. Hence $u(z)$ is entire in $(z,a_1,\dots,a_{m})$.
From \eqref{equation14} and the definition of $E(\xi)$, we have
\begin{equation}
\begin{cases}
r_m=\frac{1}{3}Y_{m+3}\\
E_m(z,a)=\frac{3}{m+3}z^{\frac{1}{3}(m+3)}+\sum^{m+2}_{N=1}\frac{Y_N}{m+3-N}z^{\frac13(m+3-N)}.
\end{cases}
\end{equation}
Set $F=z^{-r_m}\mathrm{exp}(-E_m(z,a))y_m(z,a)$. Then
\[
F'=(-r_mz^{-1}-E'_m(z,a)+y'_m(z,a)/y_m(x,a))F,
\]
and hence
\[
(F/y_m(z,a))y'_m(z,a)=F'+r_mz^{-1}F+E'_m(z,a)F.
\]
Note that \eqref{solution1} means
\[
F\cong 1+\sum^{\infty}_{N=1}B_{m,N}z^{-\frac13N}.
\]
Therefore, we have 
\[
F'\cong\sum^{\infty}_{N=1}(-\frac13N)B_{m,N}z^{\frac13N-1}.
\]
Hence
\[
(F/y_m(z,a))y'_m(z,a)\cong z^{\frac13m}(1+\sum^{\infty}_{N=1}\tilde{C}_{m,N}z^{-\frac13N}),
\]
where $\tilde C_{m,N}$ are polynomials in $(a_1,\dots,a_m)$. Furthermore, 
\[
F/y_m(z,a)=z^{-r_m}\mathrm{exp}(-E_m(z,a)).
\]
Thus we obtain \eqref{solution'}. Similarly, \eqref{solution''} can also be obtained. As a result, Theorem \ref{mainthm4} has been fully demonstrated.
\end{proof}


\begin{thebibliography}{9999}
\newcommand{\au}[1]{\textrm{#1},}
\newcommand{\ti}[1]{\textrm{#1},}
\newcommand{\jo}[1]{\textit{#1}}
\newcommand{\vo}[1]{\textbf{#1}}
\newcommand{\yr}[1]{(#1)}
\newcommand{\pp}[2]{#1-#2.}
\newcommand{\arxiv}[1]{\href{http://arxiv.org/abs/#1}{arXiv:#1}}
\bibitem[A1]{A1}
\au{D. G.L. Allegretti}
\ti{Stability conditions, cluster varieties, and Riemann-Hilbert problems from surfaces}
\jo{Advances in Mathematics.}
\vo{380}
\yr{2021}
{107610}

\bibitem[A2]{A2}
\au{D. G.L. Allegretti}
\ti{Voros symbols as cluster coordinates}
\jo{Journal of Topology.}
\vo{12}
\yr{2018}
\pp{1031}{1068}

\bibitem[A3]{A3}
\au{D. G.L. Allegretti}
\ti{On the wall-crossing formula for quadratic differentials}
\jo{International Mathematics Research Notices.}
\yr{2022}
\pp{1}{45}
\bibitem[A4]{A4}
\au{Dylan G.L. Allegretti}
\ti{Stability conditions and cluster varieties from quivers of type A}
\jo{Advances in Mathematics.}
\vo{337}
\yr{2018}
\pp{260}{293}

\bibitem[BS]{BS}
\au{T. Bridgeland, I. Smith}
\ti{Quadratic differentials as stability conditions}
\jo{Publ. Math. de l'IH\'{E}S.}
\vo{121} \yr{2015} \pp{155}{278}
(\arxiv{1302.7030})

\bibitem[BM]{BM}
\au{T. Bridgeland, D. Masoero}
\ti{On the monodromy of the deformed cubic oscillator}
\jo{Mathematische Annalen}
\vo{385}
\yr{2023}
\pp{193}{258}

\bibitem[B1]{B1}
\au{T. Bridgeland}
\ti{Riemann-Hilbert problems for the resolved conifold and non-perturbative partition functions}
\jo{Journal of Differential Geometry}
\vo{115}
\yr{2020}
\pp{395}{435}

\bibitem[B2]{B2}
\au{T. Bridgeland}
\ti{Riemann-Hilbert problems from Donaldson-Thomas theory}
\jo{Inventiones mathematicae}
\vo{216}
\yr{2016}
\pp{69}{124}

\bibitem[B3]{B3}
\au{T. Bridgeland}
\ti{Stability conditions on triangulated categories}
\jo{Ann. Math.} \vo{166} \yr{2007} \pp{317}{345}
(\href{http://arxiv.org/abs/math/0212237}{arXiv:math/0212237v3})


\bibitem[BMQS]{BMQS}
\au{A. Barbieri, M. Möller, Y. Qiu \and J. So}
\ti{Quadratic differentials as stability conditions: collapsing subsurfaces}
\arxiv{2212.08433}

\bibitem[CHQ]{CHQ}
\au{M. Christ, F. Haiden \and Y. Qiu}
\ti{Perverse schobers, stability conditions and quadratic differentials}
\arxiv{2303.18249}

\bibitem[CZ]{CZ}
\au{W. Chang, J. Zhang}
\ti{Quivers with potentials for Grassmannian algebras}
\jo{Canadian Journal of Mathematics}
\vo{75}
\yr{2023}
\pp{1199}{1225}

\bibitem[DM]{DM}
\au{D. David, W. Michael}
\ti{Polynomial cubic differentials and convex polygons in the projective plane}
\jo{Geometric and Functional Analysis}
\vo{25}
\yr{2015}\pp{1734}{1798}

\bibitem[DWZ]{DWZ}
\au{H. Derksen, J. Weyman, A. Zelevinsky}
\ti{Quiver with potentials and their representations I: Mutations}
\jo{Selecta Math}
\vo{14}
\yr{2008}
\pp{59}{119}

\bibitem[FZ1]{FZ1}
\au{S. Fomin, A. Zelevinsky}
\ti{Cluster algebras I: Foundations}
\jo{Journal of the American Mathematical Society}
\vo{15}
\yr{2001}
\pp{497}{529}

\bibitem[FZ2]{FZ2}
\au{S. Fomin, A. Zelevinsky}
\ti{Cluster algebras II: Finite type classification}
\jo{Inventiones mathematicae}
\vo{154}
\yr{2003}
\pp{63}{121}

\bibitem[FST]{FST}
\au{S. Fomin, M. Shapiro, D. Thurston}
\ti{Cluster algebras and triangulated surfaces. Part I: Cluster complexes}
\jo{Acta Mathematica}
\vo{201}
\yr{2008}
\pp{83}{146}

\bibitem[FP]{FP}
\au{S. Fomin, P. Pylyavskyy}
\ti{Tensor diagrams and cluster algebras}
\jo{Advances in Mathematics}
\vo{300}
\yr{2016}
\pp{717}{787}

\bibitem[Ga]{G}
\au{C. Garza}
\ti{A construction of hyperk\"ahler metrics through Riemann-Hilbert problems I}
\jo{Proceedings of the London Mathematical Society}
\vo{113}
\yr{2017}
(\arxiv{1701.08188})

\bibitem[Gin]{Gin}
\au{V. Ginzburg}
\ti{Calabi-Yau algebras}
\yr{2006}
(\arxiv{math/0612139})

\bibitem[GMN1]{GMN1}
\au{D. Gaiotto, G. Moore, A. Neitzke}
\ti{Spectral Networks and Snakes}
\jo{Advances in Mathematics}
\vo{15}
\yr{2014}
\pp{61}{141}

\bibitem[GMN2]{GMN2}
\au{D. Gaiotto, G. Moore, A. Neitzke}
\ti{Spectral Networks}
\jo{Annales Henri Poincar\'{e}}
\vo{14}
\yr{2013}
\pp{1643}{1731}

\bibitem[GMN3]{GMN3}
\au{D. Gaiotto, G. Moore, A. Neitzke}
\ti{Four-Dimensional Wall-Crossing via Three-Dimensional Field Theory}
\jo{Communications in Mathematical Physics}
\vo{299}
\yr{2010}
\pp{163}{224}

\bibitem[HKK]{HKK}
\au{F.~Haiden, L.~Katzarkov, M.~Kontsevich}
\ti{Flat surfaces and Stability structures}
\jo{Publ. Math. Inst. Hautes Etudes Sci.} \vo{126} \yr{2017} \pp{247}{318}
\arxiv{1409.8611}

\bibitem[Ho]{Ho}
\au{N. Honda}
\ti{Degenerate Stokes geometry and some geometric
structure underlying a virtual turning point}
\jo{RIMS K\^{o}ky\^{u}roku Bessatsu}
\yr{2008}

\bibitem[HN]{HN}
\au{L. Hollands, A. Neitzke}
\ti{Spectral Networks and Fenchel-Nielsen Coordinates}
\jo{Letters in Mathematical Physics}
\vo{106}
\yr{2013}
\pp{811}{877}

\bibitem[I]{I}
\au{A. Ikeda}
\ti{Stability conditions on $\text {CY}_N$ categories associated to $A_n$-quivers and period maps}
\jo{Mathematische Annalen}
\vo{367}
\yr{2017}
\pp{1}{49}
\bibitem[IKKS]{IKKS}
\au{K. Ito, T. Kondo, K. Kuroda, H. Shu}
\ti{WKB periods for higher order ODE and TBA equations}
\jo{Journal of High Energy Physics.}
\vo{2021}
\yr{2021}
\bibitem[IKS]{IKS}
\au{K. Ito, T. Kondo, H. Shu}
\ti{Wall-crossing of TBA equations and WKB periods for the third order ODE}
\jo{Nuclear Physics B.}
\vo{979}
\yr{2022}

\bibitem[IQ]{IQ}
\au{A.~Ikeda,  Y.~Qiu}
\ti{$q$-Stability conditions via $q$-quadratic differntials for Calabi-Yau-$\mathbb{X}$ categories}
\arxiv{1812.00010}

\bibitem[K1]{Kel1}
\au{B. Keller}
\ti{On differential graded categories}
\jo{International Congress of Mathematicians, ICM 2006.}
\vo{2}
\yr{2006}
\bibitem[K2]{Kel2}
\au{B. Keller}
\ti{Deformed Calabi-Yau completions}
\jo{J. Reine Angew. Math.}
\vo{654}
\yr{2011}
\pp{125}{180}
\bibitem[KQ1]{KQ}
\au{A.~King, \and Y.~Qiu}
\ti{Exchange graphs and Ext quivers}
\jo{Adv. Math.}
\vo{285} \yr{2015} \pp{1106}{1154}
(\arxiv{1109.2924})

\bibitem[KQ2]{KQ2}
\au{A.~King \and Y.~Qiu}
\ti{Cluster exchange groupoids and framed quadratic differentials}
\jo{Invent. Math.} \vo{} \yr{2019}
\arxiv{1805.00030}


\bibitem[KY]{KY}
\au{B. Keller, D. Yang}
\ti{Derived equivalences from mutations of quivers with potential}
\jo{Advances in Mathematics.}
\vo{226}
\yr{2011}
\pp{2118}{2168}

\bibitem[KS]{KS}
\au{Kontsevich, S. Yan}
\ti{Stability structures, motivic Donaldson-Thomas invariants and cluster transformations}
\yr{2008}
\arxiv{0811.2435}

\bibitem[N1]{N1}
\au{A. Neitzke}
\ti{Integral iterations for harmonic maps}
\yr{2017}
\arxiv{1704.01522}

\bibitem[N2]{N4}
\au{A. Neitzke}
``swn-plotter.'' \url{http://www.ma.utexas.edu/users/neitzke/mathematica/swn-plotter.nb.}

\bibitem[P]{M2}
\au{M. Pressland}
\ti{Mutation of frozen Jacobian algebras}
\jo{Journal of Algebra}
\vo{546}
\yr{2020}
\pp{236}{273}

\bibitem[Sc]{S}
\au{J. Scott}
\ti{Grassmannians and Cluster Algebras}
\jo{Proceedings of the London Mathematical Society}
\vo{92}
\yr{2003}\pp{345}{380}

\bibitem[S1]{S1}
\au{Y. Sibuya}
\ti{Global Theory of a Second Order Linear Ordinary Differential Equation with a Polynomial Coefficient}
\jo{North-Holland Publishing Company}
\yr{1975}

\bibitem[S2]{S2}
\au{Y. Sibuya}
\ti{Linear Differential Equations in the Complex Domain: Problems of Analytic Continuation}
\jo{American Mathematical Society}
\yr{1990}

\bibitem[W]{W}
\au{W. Wasow}
\ti{Asymptotic expansions for ordinary differential equations}
\yr{1965}

\end{thebibliography}
\end{document}